\titleformat{\section}[block]{\large\scshape\bfseries\filcenter}{\thesection.}{1em}{}		
\titleformat{\subsection}[hang]{\large\scshape\bfseries}{\thesubsection}{1em}{}			
\titleformat{\subsubsection}[hang]{\large\scshape\bfseries}{\thesubsubsection}{1em}{}			
\newcolumntype{M}[1]{>{\centering\arraybackslash}m{#1}}
\newcolumntype{N}{@{}m{0pt}@{}}
\numberwithin{equation}{subsection}
\newtheorem{theorem}[equation]{Theorem}
\newtheorem{proposition}[equation]{Proposition}
\newtheorem{lemma}[equation]{Lemma}
\newtheorem{corollary}[equation]{Corollary}
\newtheorem{claim}[equation]{Claim}
\theoremstyle{definition}
\newtheorem{rmk}[equation]{Remark}
\newenvironment{remark}[1][]{\begin{rmk}[#1] \pushQED{\qed}}{\popQED \end{rmk}}
\newtheorem{eg}[equation]{Example}
\newenvironment{example}[1][]{\begin{eg}[#1] \pushQED{\qed}}{\popQED \end{eg}}
\newtheorem{defn}[equation]{Definition}
\newenvironment{definition}[1][]{\begin{defn}[#1]\pushQED{\qed}}{\popQED \end{defn}}
\renewcommand{\thesubsection}{%
  \ifnum\c@subsection<1 \@arabic\c@section
  \else \thesection.\@arabic\c@subsection
  \fi
}
\newcommand{\cB}{\mathcal{B}}
\newcommand{\bF}{\mathbf{F}}
\newcommand{\cG}{\mathcal{G}}
\newcommand{\fG}{\mathfrak{G}}
\newcommand{\sG}{\mathscr{G}}
\newcommand{\cI}{\mathcal{I}}
\newcommand{\cJ}{\mathcal{J}}
\newcommand{\bN}{\mathbf{N}}
\newcommand{\bP}{\mathbf{P}}
\newcommand{\cP}{\mathcal{P}}
\newcommand{\bS}{\mathbf{S}}
\newcommand{\cS}{\mathcal{S}}
\newcommand{\bV}{\mathbf{V}}
\newcommand{\bZ}{\mathbf{Z}}
\newcommand{\bk}{\mathbf{k}}
\newcommand{\bv}{\mathbf{v}}
\newcommand{\bx}{\mathbf{x}}
\newcommand{\by}{\mathbf{y}}
\newcommand{\init}{{\rm init}}
\renewcommand{\phi}{\varphi}
\renewcommand{\emptyset}{\varnothing}
\newcommand{\injects}{\hookrightarrow}
\renewcommand{\tilde}[1]{\widetilde{#1}}
\def\Ddots{\mathinner{\mkern1mu\raise\p@
\vbox{\kern7\p@\hbox{.}}\mkern2mu
\raise4\p@\hbox{.}\mkern2mu\raise7\p@\hbox{.}\mkern1mu}}
\renewcommand{\hom}{\operatorname{Hom}}
\DeclareMathOperator{\Sym}{Sym}
\DeclareMathOperator{\Tor}{Tor}
\DeclareMathOperator{\sgn}{sgn}
\DeclareMathOperator{\Sec}{Sec}
\DeclareMathOperator{\RL}{\mathbf{RL}}
\newcommand{\GL}{\mathbf{GL}}
\newcommand{\Gr}{\mathbf{Gr}}
\DeclareRobustCommand{\gobblefour}[5]{}
\begin{document}

\title[Syzygies of Secant Ideals of Pl\"ucker-embedded Grassmannians]{Syzygies of Secant Ideals of Pl\"ucker-embedded Grassmannians are Generated in Bounded Degree}
\author{Robert P. Laudone}
\address{Department of Mathematics, University of Wisconsin, Madison}
\email{laudone@wisc.edu \newline \indent {\em URL:} \url{https://www.math.wisc.edu/~laudone/}}
\date{\today}
\subjclass[2010]{13E05, 13D02, 14M15, 15A69, 16T15}
\thanks{RL was supported by NSF grant DMS-1502553.}
\maketitle

\begin{abstract}
Over a field of characteristic $0$, we prove that for each $r \geq 0$ there exists a constant $C(r)$ so that the prime ideal of the $r$th secant variety of any Pl\"ucker-embedded Grassmannian $\Gr(d,n)$ is generated by polynomials of degree at most $C(r)$, where $C(r)$ is independent of $d$ and $n$. This bounded generation ultimately reduces to proving a poset is noetherian, we develop a new method to do this. We then translate the structure we develop to the language of functor categories to prove the $i$th syzygy module of the coordinate ring of the $r$th secant variety of any Pl\"ucker-embedded Grassmannian $\Gr(d,n)$ is concentrated in degrees bounded by a constant $C(i,r)$, which is again independent of $d$ and $n$.
\end{abstract}

\section{Introduction}

Given a vector space $\bV$ of dimension $n$ over a field $\bk$ of characteristic $0$, recall that $\Gr(d,\bV)$ is the space that parametrizes all dimension $d$ subspaces of $\bV$ called the {\em Grassmannian}. We will omit the choice of $\bV$ and just write $\Gr(d,n)$. A classical result in algebraic geometry realizes $\Gr(d,n)$ as a projective variety via the Pl\"ucker embedding. Specifically, we can define a map $\Gr(d,n) \injects \bP(\bigwedge^d \bk^n)$ as follows. Given a $d$-dimensional subspace spanned by $v_1,\dots,v_d$ in $\Gr(d,n)$ we send
\[
{\rm span}(v_1,\dots,v_d) \mapsto v_1 \wedge \cdots \wedge v_d.
\]
This choice of basis is not unique, but when we apply a change of basis we scale the wedge product by the determinant and so this map is well defined on projective space. The $r$th secant variety of the Pl\"ucker embedding of $\Gr(d,n)$ denoted $\Sec_r(\rho(\Gr(d,n))$, is the Zariski closure in $\bP(\bigwedge^d \bk^n)$ of the set of expressions $\sum_{i=0}^r x_i$ where $x_i$ is in the embedded Grassmannian.  Our convention is that the zeroth secant variety is the original variety. 

Secant varieties have long been a topic of interest in algebraic geometry. Despite this, very little is known about their algebraic structure. Many results about secant varieties focus on the dimension of the space or finding bounds on the degrees of set theoretic generators \cite{DE,DK}. Ideal-theoretic generators are hard to find \cite{MM, LM, LO} and accordingly are not well understood.

Specifically for the Pl\"ucker embedding, a good amount is known about the {\em dimensions} of these secant varieties \cite{CGG, BDdG}. Some set-theoretic results are also known. For example, in \cite{KPRS} the authors prove that all Pl\"ucker embeddings are generated set theoretically by pullbacks of the Klein quadric.

Recently, in \cite{DE} the authors greatly expand the scope of \cite{KPRS} to show that for any fixed $r$, the $r$th secant variety of the Pl\"ucker-embedded $\Gr(d,n)$ is defined set theoretically by polynomials of bounded degree independent of $d$ and $n$. They pose a question at the end of their paper about whether the ideal-theoretic version of their theorem holds. Furthermore, they mention that the ideas present in their paper will not suffice to address the ideal-theoretic version.

The purpose of this paper is to answer this question in the affirmative in characteristic $0$. We ultimately prove the following:

\begin{theorem} \label{TheoremA}
Assume ${\rm char}(\bk) = 0$. For each $r \geq 0$, there is a constant $C(r)$ such that the prime ideal of the $r$th secant variety of the Pl\"ucker-embedded $\Gr(d,n)$, is generated by polynomials of degree $\leq C(r)$, where $C(r)$ does not depend on the choice of $d$ or $n$.
\end{theorem}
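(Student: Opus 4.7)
The plan is to pass to an infinite-dimensional limit and reformulate the bounded generation as a noetherianity statement for a combinatorial poset, mirroring the strategy used in \cite{DE} for the set-theoretic result but upgrading it to the ideal-theoretic level. Fix $r$, let $V$ be a countably infinite-dimensional $\bk$-vector space, and let $A^{(d)}$ denote the (equivariant) polynomial ring on $\bigwedge^d V$, which carries a natural $\GL(V)$-action. The stabilized ideals cutting out the $r$th secant variety of the Pl\"ucker-embedded $\Gr(d,V)$ form a $\GL(V)$-equivariant ideal $I_r^{(d)} \subset A^{(d)}$, and Theorem~\ref{TheoremA} (uniformly in $n$) amounts to $I_r^{(d)}$ being generated in bounded degree as a $\GL$-ideal. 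To also control $d$, I would let $d$ vary and package the family $\{I_r^{(d)}\}_d$ into a single equivariant structure before measuring generation.

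Next, I would encode the $\GL$-equivariant generators of this ideal as the elements of a combinatorial poset $P_r$: each element records an admissible Pl\"ucker monomial (a formal product of wedges with labeled indices), and the partial order combines divisibility with the symmetries induced by the exterior factors. Via a standard dictionary between $\GL$-equivariant ideals and downward-closed subsets of $P_r$, finite generation of $I_r^{(d)}$ in bounded degree---uniformly in $d$ and $n$---is equivalent to $P_r$ being noetherian, i.e.\ containing no infinite antichain and no infinite strictly descending chain.

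The crux is therefore to prove $P_r$ is noetherian, and this is where I expect the main obstacle. Classical well-quasi-ordering tools such as Higman's lemma and Kruskal's tree theorem do not apply directly, since the antisymmetry built into $\bigwedge^d$ breaks the naive order-embeddings and because monomials are linked by sign-canceling exchange relations that are invisible to such tools. My plan is to induct on $r$: in any candidate infinite antichain, isolate a distinguished ``pivot'' wedge common to (an infinite sub-antichain of) the elements, decompose each monomial relative to that pivot, and combine the inductive hypothesis applied to the residual data with a refined Higman-style argument tailored to the exterior combinatorics. Once $P_r$ is shown noetherian, $I_r^{(d)}$ admits only finitely many $\GL$-orbits of generators, all of degree at most some $C(r)$ independent of $d$ and $n$, and specializing to $V = \bk^n$ completes the proof.
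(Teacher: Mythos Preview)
Your high-level instinct---reduce Theorem~\ref{TheoremA} to noetherianity of a combinatorial poset built from Pl\"ucker monomials---matches the paper's, but the concrete plan diverges from what actually works in several places.

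First, the paper does \emph{not} work over an infinite-dimensional $V$ and then specialize. A key preliminary step (Lemma~\ref{Mbound}, via \cite[Proposition~5.7]{MM}) is that for the $r$th secant it suffices to bound generators of $\Sec_r(\Gr(d,(r+2)d))$; this lets one assemble everything into the single bigraded object $\cP_\Sigma=\bigoplus_{d,n}\Sym^n(\bigwedge^d\bk^{(r+2)d})$. The finiteness of the ambient dimension $Md$ is essential to the pigeonhole count in Lemma~\ref{ndbounded}, which is the engine of the noetherianity proof. Working over infinite $V$ forfeits exactly this.

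Second, the ``standard dictionary'' you invoke between $\GL$-equivariant ideals and downward-closed sets does not exist in the form you need here; the antisymmetry you flag is a genuine obstruction, not a nuisance. The paper circumvents it by equipping $\cP$ with \emph{two} products---an external shuffle product $\cdot_\sigma$ and an internal product $\ast_g$ indexed by increasing injections---and defining ``ideal'' relative to both. Noetherianity is proved for \emph{monomial} ideals via the reading-list poset $(\RL,\leq_{RL})$ and then lifted to arbitrary ideals by a Gr\"obner/initial-ideal argument (Lemmas~\ref{order}--\ref{gen}). Your proposal has no analogue of this monomial reduction, and without it the sign-cancellation issue you mention is fatal.

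Third, the induction on $r$ with a ``pivot wedge'' is not how the argument runs and it is hard to see how it could: the poset $(\RL,\leq_{RL})$ does not depend on $r$ at all (only on $M=r+2$), and its noetherianity is established once, by showing that in any infinite antichain of reading lists one of the two parameters $d,n$ must be bounded (Lemma~\ref{ndbounded}, a counting argument), and then dispatching each bounded case with Kruskal/Higman (Lemmas~\ref{nnotbounded}, \ref{dnotbounded}). The secant parameter $r$ enters only at the very end, through a separate step you omit entirely: one must show that the secant ideal $\cS_{r+2}(r)$ is closed under \emph{both} products. This is done by proving that $\Delta$ is compatible with $\ast_g$ (Lemma~\ref{comultproduct}), so that joins of di-ideals are di-ideals (Proposition~\ref{secantideal}). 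Your outline never establishes why the secant ideals interact correctly with whatever order you put on $P_r$, and that compatibility is half the content of the theorem.
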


This theorem has an immediate corollary resulting from the proof techniques. Exact descriptions of $\ast_g$ and $\cdot$, can be found in \S\ref{3}.

\begin{corollary} \label{CorollaryA}
Assume ${\rm char}(\bk) = 0$. For $r \geq 0$, the equations for the $r$th secant variety of the Pl\"ucker embedding of any $\Gr(d,n)$ can be built out of finitely many equations $f_1,\dots,f_N$ of degree bounded by $C(r)$ under the operations $\ast_g$ and $\cdot$.
\end{corollary}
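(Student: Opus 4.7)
The plan is to deduce Corollary~\ref{CorollaryA} directly from Theorem~\ref{TheoremA} together with the representation-theoretic and functorial structure of the ambient polynomial ring. The two operations play complementary roles: multiplication $\cdot$ propagates the ideal from its low-degree part up to all higher degrees, while $\ast_g$ encodes the equivariance of the whole family under a group action and the structural morphisms relating different $\Gr(d,n)$.

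First, I would observe that Theorem~\ref{TheoremA} already gives that the prime ideal of $\Sec_r(\rho(\Gr(d,n)))$ is ideal-generated by its graded pieces of degree at most $C(r)$. Hence once a generating set for the degree-$\leq C(r)$ piece is in hand, iterated application of the ring multiplication $\cdot$ recovers the entire ideal, leaving only the problem of producing finitely many generators in bounded degree.

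Second, within each bounded-degree piece the ideal sits as a subrepresentation of the finite direct sum $\bigoplus_{j \leq C(r)} \Sym^j(\bigwedge^d \bk^n)$. By the Cauchy and plethysm decompositions this ambient space is a finite sum of Schur functors applied to $\bk^n$, each of which is irreducible under $\GL$ and hence generated by a single highest-weight vector. The relevant group action is precisely the $\ast_g$ operation, so for any \emph{fixed} pair $(d,n)$ finitely many generators together with $\ast_g$ already suffice to recover the degree-$\leq C(r)$ part of the ideal.

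The main obstacle, and the reason the functorial framework of \S\ref{3} is needed, is making this finiteness uniform in $d$ and $n$: the number of Schur functors fitting under the degree cap $C(r)$ grows with $d$ and $n$, so one cannot simply take generators separately for each $(d,n)$. I would therefore reinterpret the whole family $\{I_r(d,n)\}_{d,n}$ as a single object in the functor category introduced in \S\ref{3}, where $\ast_g$ simultaneously packages the $\GL$-actions and the transition morphisms tying the ideals across different $(d,n)$. Theorem~\ref{TheoremA}, translated through this dictionary, becomes a finite-generation statement for this functorial object in degrees $\leq C(r)$, which is exactly the assertion that some finite list $f_1,\dots,f_N$ of equations of degree $\leq C(r)$ generates the whole family under $\ast_g$ and $\cdot$.
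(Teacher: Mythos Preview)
Your argument has the logical dependence inverted, and this creates a genuine gap. In the paper, Corollary~\ref{CorollaryA} is not deduced from the \emph{statement} of Theorem~\ref{TheoremA}; rather, both are proved simultaneously in \S\ref{5} from the same ingredient, namely the noetherianity of $\cP^\Sigma$ (Proposition~\ref{invfingen}) applied to the di-ideal $\cG(\cS_{r+2}(r))$ (Corollary~\ref{rthsecantideal}). That proposition produces the finite list $f_1,\dots,f_N$ directly, and $C(r)$ is then \emph{defined} as $\max_i \deg(f_i)$. So Corollary~\ref{CorollaryA} is, if anything, the more immediate consequence of the machinery, and Theorem~\ref{TheoremA} is read off from it.

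Your final step is where the gap lies. You write that ``Theorem~\ref{TheoremA}, translated through this dictionary, becomes a finite-generation statement for this functorial object.'' This is not a translation but a strictly stronger assertion. Theorem~\ref{TheoremA} says only that for each $(d,n)$ separately the ideal is generated in degree $\leq C(r)$; it gives no control on how the generators for different $(d,n)$ are related under $\ast_g$. The passage to a single finite list $f_1,\dots,f_N$ working for all $(d,n)$ is exactly the content of Proposition~\ref{invfingen}, whose proof occupies all of \S\ref{2} and \S\ref{3} and does not follow from the degree bound. Your paragraph acknowledges this obstacle (``the number of Schur functors fitting under the degree cap grows with $d$ and $n$'') but then resolves it by appeal to Theorem~\ref{TheoremA}, which is circular.

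A secondary issue: your description of $\ast_g$ as encoding ``the $\GL$-actions'' is not accurate. The operation $\ast_g$ is a product $\cP_{d,n}^\Sigma \times \cP_{e,n}^\Sigma \to \cP_{d+e,n}^\Sigma$ indexed by increasing injections; it moves between different values of $d$ rather than acting on a fixed graded piece. Highest-weight-vector arguments for a fixed $(d,n)$ do not capture what $\ast_g$ does, and in particular cannot substitute for the combinatorial noetherianity argument (Theorem~\ref{RLNoeth}) on the poset of reading lists.
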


The main idea in proving Theorem \ref{TheoremA} is to combine all of the ideals of the Pl\"ucker-embedded Grassmannians into a Hopf ring $\cP_\Sigma$ which we define in \S\ref{3}. We then prove noetherianity results with respect to the additional structure on this ring.

Once we define the Hopf ring $\cP_\Sigma$ and show it is noetherian, it is natural to ask if all finitely generated modules over $\cP_\Sigma$ are noetherian. To address this question, we must abstract the structure we develop in proving Theorem \ref{TheoremA} to the language of functor categories as seen in \cite{Sa2,CEF,SS1}. After we transition to this language, we use the new tools available to develop a syzygy theory for Pl\"ucker embedded Grassmannians analogous to the $\Delta$-modules seen in \cite{Sn} and the Veronese theory in \cite{Sa2}, in particular we prove the following:

\begin{theorem} \label{TheoremB}
There is a function $C(i,r)$, depending on $i,r$, but independent of $d,n$, such that the $i$th syzygy module of the coordinate ring of the $r$th secant variety of the Pl\"ucker-embedded $\Gr(d,n)$ is concentrated in degrees bounded by $C(i,r)$.
\end{theorem}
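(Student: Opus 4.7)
The plan is to deduce Theorem~\ref{TheoremB} from the noetherianity results behind Theorem~\ref{TheoremA} by importing everything into a functor-category framework and then running a standard Tor argument there. The first step is to assemble the coordinate rings $R_{d,n,r}:=\bk[\Sec_r(\rho(\Gr(d,n)))]$ into a single graded algebra-object $R_r$ in a functor category $\cF$ whose combinatorics reflects the Hopf-ring structure on $\cP_\Sigma$: morphisms in $\cF$ are generated by the inclusions $\Gr(d,n)\injects\Gr(d,n+1)$ together with the data that encode the operations $\ast_g$ and $\cdot$. In this language, Theorem~\ref{TheoremA} is exactly the assertion that the defining ideal of $R_r$ inside $\cP_\Sigma$ is a finitely generated sub-object of $\cF$, in analogy with the way the $\Delta$-modules of~\cite{Sn} and the Veronese modules of~\cite{Sa2} encode families of ideals.

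The second step is to upgrade noetherianity at the level of ideals of $\cP_\Sigma$ to noetherianity of finitely generated $R_r$-modules in $\cF$. The poset-theoretic argument used for Theorem~\ref{TheoremA} should adapt, since a finitely generated $R_r$-module is encoded by a finite label set together with the same kind of monomial data, and a well-quasi-order argument on the enlarged poset closes every ascending chain of submodules. This is the standard pattern by which noetherianity of an algebra in a functor category propagates to noetherianity of its module category.

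Granted module noetherianity, the syzygy bound is formal. I would build a resolution $F_\bullet\to\bk\to 0$ of the trivial $R_r$-module by free $R_r$-modules $F_i$ in $\cF$, each finitely generated; such a resolution exists because the augmentation ideal of $R_r$ is finitely generated in $\cF$ by the previous step, and one iteratively resolves the kernels, all of which remain finitely generated by noetherianity. The family $\{\Tor_i^{R_{d,n,r}}(\bk,\bk)\}_{d,n}$ is naturally the $(d,n)$-evaluation of $\Tor_i^{R_r}(\bk,\bk)$ in $\cF$, and as a subquotient of $F_i$ it is a finitely generated object of $\cF$. Since the grading on $\cF$ is compatible with the standard grading on each $R_{d,n,r}$, finite generation in $\cF$ translates into the uniform bound $C(i,r)$ of Theorem~\ref{TheoremB} on the generating degrees of the $i$th syzygy module.

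The principal obstacle is the passage from ideal-noetherianity to module-noetherianity inside $\cF$: one must design the functor category carefully enough that the poset used to prove Theorem~\ref{TheoremA} enlarges to accommodate arbitrary finite module generators while remaining noetherian, and one must simultaneously verify that the $\cF$-theoretic bar resolution of $\bk$ recovers, component by component, the classical Koszul complex computing $\Tor_i^{R_{d,n,r}}(\bk,\bk)$. Once both compatibilities are in place, Theorem~\ref{TheoremB} follows immediately from finite generation of $\Tor_i^{R_r}(\bk,\bk)$ in $\cF$.
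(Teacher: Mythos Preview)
Your broad architecture matches the paper's---translate to a functor category, upgrade ideal-noetherianity to module-noetherianity, resolve, read off degree bounds---but the resolution step is set up incorrectly. You propose to resolve $\bk$ by free $R_r$-modules and compute $\Tor_i^{R_{d,n,r}}(\bk,\bk)$. That is not the syzygy module in question: the $i$th syzygy module of the coordinate ring $R$ is $\Tor_i^{S}(R,\bk)$ with $S=\Sym(\bigwedge^d\bk^n)$ the ambient polynomial ring, and this is what the paper actually bounds (Theorem~\ref{syzygy}). The two Tor groups are genuinely different; $\Tor_i^{R}(\bk,\bk)$ is typically nonzero for all $i$ and its degree behaviour says nothing directly about the $S$-syzygies of $R$. (Your remark about the ``classical Koszul complex'' suggests you may have had $\Tor^{S}$ in mind after all, since Koszul is a free $S$-resolution, not a free $R$-resolution.) The paper instead resolves the $\sG_M$-module $\cS_M(r)$ by principal projectives $\cP_\Sigma(-e,-n)$ of $\sG_M$. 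The crucial compatibility---precisely the obstacle you flag---is Lemma~\ref{free}: each principal projective, restricted to fixed $d$, is a \emph{free} $\Sym(\bigwedge^d\bk^{Md})$-module generated in a single degree. Hence the functor-category projective resolution specialises, for every $d$, to an honest free $S$-resolution of $\Sec_{d,r}$ and therefore computes the correct $\Tor^{S}$.

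There is one further gap your outline does not address. The category $\sG_M$ has the ambient dimension hard-wired as $Md$; the objects are pairs $(d,m)$, not triples $(d,n,m)$, so there is no morphism playing the role of $\Gr(d,n)\hookrightarrow\Gr(d,n+1)$ as you envision. The bound coming out of noetherianity is therefore a priori $C_M(i,r)$, depending on $M$. The independence from $n$ asserted in Theorem~\ref{TheoremB} requires a separate representation-theoretic step (Theorem~\ref{Mindependent}): every Schur functor appearing in $\Tor_i$ has at most $d(r+1+i)$ rows, so taking $M=r+1+i$ already captures everything and one may set $C(i,r)=C_{r+1+i}(i,r)$.
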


This theorem ultimately encapsulates Theorem \ref{TheoremA} when we take $i = 1$, but the structure we develop to prove Theorem \ref{TheoremA} is crucial in proving Theorem \ref{TheoremB}.

\subsection{Outline of Argument} The proof of Theorem \ref{TheoremA} breaks into the following steps:
\begin{enumerate}
\item For fixed $r \geq 0$, we reduce to considering $\Gr(d,(r+2)d)$ as $d$ varies. This will allow us to bound the degrees of the ideal generators for the $r$th secant varieties of any $\Gr(d,n)$. For this we use  \cite[Proposition 5.7]{MM} as explained in \S\ref{5}.
\item We now consider all values of $d$ via the space $\cP_\Sigma = \bigoplus_{n,d} \Sym^n(\bigwedge^d \bk^{(r+2)d})$, where $\bk$ is a field of characteristic $0$. If $\bV$ is a $(r+2)d$ dimensional vector space over $\bk$, we know $\bV \cong \bk^{(r+2)d}$, so it suffices to consider $\cP_\Sigma$. As in \cite{Sa1}, we observe that there are two products on this space: the usual ``external" product that multiplies outside symmetric powers and a new ``internal" product that multiplies inside exterior powers up to an increasing change of index. We show that subspaces of this space which are ideals for both products are finitely generated. The key insight is that in an infinite antichain of monomials in this space, both $n$ and $d$ cannot be unbounded, this is seen in \S\ref{2}. The internal product involves symmetrizations and so we must assume that the field $\bk$ has characteristic $0$. This step is done in \S\ref{3} with the key preparations in \S\ref{2}.
\item Finally, we notice that the two products are compatible with the standard comultiplication on the symmetric algebra $\cP_\Sigma$. We can define secant varieties in terms of comultiplication. Using this structure, we prove the essential fact the ideal of the $r$th secant variety of the direct sum of all the Pl\"ucker ideals corresponding to $\Gr(d,(r+2)d)$ as $d$ varies, is an ideal in $\cP_\Sigma$ with respect to both products. So using the above, because the $(d,n)$-bigraded component of this ideal corresponds to all degree $n$ polynomials in the $r$th secant variety of the Pl\"ucker embedding of $\Gr(d,(r+2)d)$, we can deduce finite generation. This result is stated in \S\ref{Joins and Secants} with most of the preparation and work done in \S\ref{3}.
\end{enumerate}
In the last two sections, the proof of Theorem \ref{TheoremB} breaks into the following steps:
\begin{enumerate}
\item We translate the structure of $\cP_M$ from \S\ref{2} to the language of functor categories, by developing a category $\cG_M$ whose principal projective generated in degree $(0,0)$ corresponds exactly to $\cP_M$ and whose other morphisms $(d,m) \to (e,n)$ encapsulate multiplication from the $(d,m)$ bigraded piece of $\cP_M$ to the $(e,n)$ bigraded piece. We then use the results from \S\ref{2} to show $\cG_M$ is a Gr\"obner category as defined in \cite{SS1}. The bulk of this is done in \S\ref{6}.
\item We then define a symmetrized version of $\cG_M$ called $\sG_M$ whose principal projective generated in degree $(0,0)$ corresponds to $(\cP_\Sigma)_M$ as seen in \S\ref{3}. At the end of \S\ref{6}, we use the fact that $\cG_M$ is Gr\"obner to prove that every finitely generated $\sG_M$-module is noetherian.
\item With this structure we can study free resolutions of secant ideals of Pl\"ucker embedded Grassmannians. In \S\ref{7}, we find a particular free resolution using the principal projectives in $\sG_M$ which allows us to ultimately deduce Theorem \ref{TheoremB}.
\end{enumerate}

\subsection{Relation to previous work}
\begin{itemize}
\item $\Sec_0(\rho(\Gr(d,n))$ is just the Pl\"ucker-embedded Grassmannian. It is well known that its ideal is generated by quadratic polynomials (the Pl\"ucker equations), so $C(0) = 2$. The case for $d=2$ is well known, in particular the ideal of $\Sec_r(\rho(\Gr(2,n)))$ is generated in degree $r+2$ by sub-Phaffians of size $2r+4$ \cite[\S10]{LO}. This implies a lower bound, $C(r) \geq r+2$, but outside of this we know very little about $C(r)$.
\item As mentioned the Veronese case was addressed in \cite{Sa1}. We address the Pl\"ucker case in this paper. Snowden developed $\Delta$-modules in \cite{Sn} to prove a boundedness result about the syzygies of the Segre embeddings. The question still remains, are the ideals of the secant varieties of the Segre embeddings defined in bounded degree? Can these techniques be used to address the Segre case and ultimately prove results about the syzygies of secant varieties as well?
\item  Rather than look at all Pl\"ucker embeddings of Grassmannians, one can consider all Segre embeddings of products of projective spaces or Veronese embeddings of projective space. If a Segre analogue of these methods can be developed, could it also apply to Segre-Veronese embeddings? 
\item In general, computing the ideals of secant varieties is difficult. We refer the reader to \cite{MM, LO, LW} for references concerning these explicit computations for some cases of the Segre, Veronese and Pl\"ucker embeddings.
\item The idea for showing ideals in $\cP_\Sigma$ are finitely generated was motivated mainly by work in \cite{Sa1}. The underlying idea in most of this work is {\em noetherianity up to symmetry}. For a nice introduction we recommend \cite{D}. Ultimately, one works with a space or object on which a group acts and proves finite generation up to the action of this group. This idea is essential in \cite{SS1,SS2,SS3,NSS}, where the authors explore various manifestations of this idea to prove finite generation results for various representations of categories and twisted commutative algebras. These ideas are also present in \cite{CEF, Sn, DE, DK, Hi, HS, To} where they were used to prove more surprising stability theorems.
\end{itemize}

\subsection{Conventions}
For the most part, $\bk$ will denote a field of characteristic $0$. In \S\ref{2}, this assumption is not necessary and so we let $\bk$ be any commutative noetherian ring, but the assumption is needed in the following sections. We always tensor over $\bk$.

We always denote by $\Sigma_n$ the symmetric group on $n$ letters, and we denote the set $\{1,\dots,n\}$ by $[n]$.

Given a vector space $\bV$, $\bigwedge^d \bV$ denotes its $d$th exterior power. Similarly, $\Sym^d \bV$ denotes the $d$th symmetric power and $\Sym(\bV) = \bigoplus_{d \geq 0} \Sym^d \bV$.
\ \\

\noindent {\large {\bf Acknowledgements.}} I thank Steven Sam for directing me towards this problem, and for his constant guidance and helpful conversations. 

\section{Shuffle-Star Algebra} \label{2}

Fix a commutative noetherian ring $\bk$ (for our purposes taking $\bk$ to be a field suffices, but the general case has the same proof and could be useful in the future).

For a fixed $M \in \bZ_{\geq 0}$, consider the following algebra:
\[
\mathcal{P}_M = \bigoplus_{d,n} (\bigwedge^d \bk^{Md})^{\otimes n}.
\]
In general, we will suppress the subscript $M$, only using it when the value of $M$ will affect the definition or result. A {\bf monomial} of $\cP$ is an element of the form $w_1 \otimes \cdots \otimes w_n$ where each $w_i \in \bigwedge^d k^{Md}$ and $w_i = \alpha (v_{j_1} \wedge \cdots \wedge v_{j_d})$ where $v_{j_i}$ come from the standard basis for $\bk^{Md}$ and $\alpha \in \bk$.

 

We define two multiplication structures on $\cP$. The first is the same as the shuffle product in \cite{Sa1}, we recall it for sake of completeness. Pick a subset $\{i_1 < \cdots < i_n\}$ of $[n+m]$ and let $\{j_1 < \cdots < j_m\}$ be its complement; denote this pair of subsets by $\sigma$ and call it a {\bf split} of $[n+m]$. A split defines a shuffle product
\[
(\bigwedge^d \bk^{Md})^{\otimes n} \cdot_{\sigma} (\bigwedge^d \bk^{Md})^{\otimes m} \to (\bigwedge^d \bk^{Md})^{\otimes (n+m)},
\]
where $(u_1 \otimes \cdots \otimes u_n) \cdot_\sigma (v_1 \otimes \cdots \otimes v_m) = w_1 \otimes \cdots \otimes w_{n+m}$ with $w_{i_k} = u_k$ and $w_{j_k} = v_k$. Whenever we write $f \cdot_\sigma g$, we are implicitly assuming that $\sigma$ is a split of the correct format, otherwise define it to be $0$.

Notice there is an action of $\Sigma_\infty$ on $\cP$ by permuting all possible indices. We recall that for $f \in \cP$, the {\bf width} of $f$, denoted $w(f)$, is the smallest integer $n$ such that for every $\sigma \in \Sigma_\infty$ that fixes $\{1,\dots,n\}$, $\sigma$ also fixes $f$. Every element of $\cP$ must be a finite linear combination of monomials, so only finite many indexes can appear. This means every $f \in \cP$ has finite width, or equivalently that $\cP$ satisfies the finite width condition.

We recall the definition of the {\bf monoid of increasing functions}:
\[
{\rm Inc}(\bN) = \{ \rho: \bN \to \bN \; | \; \forall a<b, \rho(a) < \rho(b)\}.
\]
Since $\cP$ carries an action of $\Sigma_\infty$, there is a natural action of ${\rm Inc}(\bN)$ on $\cP$ as follows. Fix $f \in \cP$, for any $\sigma \in \Sigma_\infty$, $\sigma f$ only depends on $\sigma|_{[w(f)]}$ considering $\sigma$ as a function $\bN \to \bN$. For any $\rho \in {\rm Inc}(\bN)$, there exists some $\sigma \in \Sigma_\infty$ such that $\rho|_{[w(f)]} = \sigma|_{[w(f)]}$, define $\rho f = \sigma f$. The same argument presented in \cite[Pages 6-7]{DK} shows that this gives a well defined action of ${\rm Inc}(\bN)$ on $\cP$.

We define a new product $\ast_g$ where $g \in {\rm Inc}(\bN)$. For monomials we define,
\[
(\bigwedge^d \bk^{Md})^{\otimes n} \ast_g (\bigwedge^e \bk^{Me})^{\otimes n} \to (\bigwedge^{d+e} \bk^{M(e+d)})^{\otimes n}.
\]
as follows. We first require $g([Md]) \subseteq [M(e+d)]$, otherwise we define the product to be zero. Whenever we use this product, we will always implicitly assume that $g$ satisfies this property. Suppose $g([Md]) = \{\alpha_1,\dots,\alpha_{Md}\} \subset [M(e+d)]$ with $g(i) = \alpha_i$, also let $\{\beta_1,\dots,\beta_{Me}\} = [M(e+d)] \setminus g([Md])$. For ease of notation, we define $g^c \in {\rm Inc}(\bN)$ to be $g^c(i) = \beta_i$, we call this the {\em complement of $g$}. Then for monomials,
\begin{align*}
&\left[(v_{i_1} \wedge \cdots \wedge v_{i_d}) \otimes \cdots \otimes (v_{i_{nd-n+1}} \wedge \cdots \wedge v_{i_{nd}}) \right] \ast_g \left[(v_{j_1} \wedge \cdots \wedge v_{j_d}) \otimes \cdots \otimes (v_{j_{nd-n+1}} \wedge \cdots \wedge v_{j_{nd}}) \right] \\
&= g(v_{i_1} \wedge \cdots \wedge v_{i_d}) \wedge g^c(v_{j_1} \wedge \cdots \wedge v_{j_d}) \otimes \cdots \otimes g(v_{i_{nd-n+1}} \wedge \cdots \wedge v_{i_{nd}}) \wedge g^c(v_{j_{nd-n+1}} \wedge \cdots \wedge v_{j_{nd}})\\
&=(v_{\alpha_{i_1}} \wedge \cdots \wedge v_{\alpha_{i_d}} \wedge v_{\beta_{j_1}} \wedge \cdots \wedge v_{\beta_{j_d}})  \otimes \cdots \otimes (v_{\alpha_{i_{nd-n+1}}} \wedge \cdots \wedge v_{\alpha_{i_{nd}}} \wedge  v_{\beta_{j_{nd-n+1}}} \wedge \cdots \wedge v_{\beta_{j_{nd}}})
\end{align*}
Where we view the $v_{\alpha_{i_j}}$ and $v_{\beta_{i_j}}$ as the standard basis vectors in $\bk^{M(d+e)}$. For general $f \in \cP_{d,n}$, $h \in \cP_{e,n}$ and $g \in E$ extend bilinearly.
Extend to the rest of $\cP$ by declaring all other products to be $0$. We first notice a few properties of these products. There is a modified associativity.

\begin{lemma} \label{associative}
Given $f \in \cP_{d,n}$, $b \in \cP_{d,m}$ and $a \in \cP_{e,n+m}$. and a split $\sigma$ of $[n+m]$ and $g \in E$, there exist $p_1,\dots,p_r \in \cP_{e,n}$ and $h_1,\dots,h_r \in \cP_{d+e,m}$ so that,
\[
(f \cdot_\sigma b) \ast_g a = \sum_{i=1}^r h_i \cdot_\sigma (f \ast_{g} p_i).
\]
\end{lemma}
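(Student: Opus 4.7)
The plan is to reduce to the case of monomials and then verify that a single term ($r=1$) already suffices. The key observation is that $\ast_g$ acts componentwise on tensor slots while $\cdot_\sigma$ merely records which slot came from which factor, so splitting $a$ according to $\sigma$ cleanly separates the ``$f$-contribution'' from the ``$b$-contribution.''

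First I would use bilinearity of both operations in each argument to assume that $f = f_1 \otimes \cdots \otimes f_n$, $b = b_1 \otimes \cdots \otimes b_m$, and $a = a_1 \otimes \cdots \otimes a_{n+m}$ are monomials, with $f_k, b_l$ pure wedges in $\bigwedge^d \bk^{Md}$ and $a_t$ a pure wedge in $\bigwedge^e \bk^{Me}$. Given the split $\sigma = (\{i_1 < \cdots < i_n\},\ \{j_1 < \cdots < j_m\})$, I would decompose $a$ by assigning its factors to the two subsets:
\[
a^{(1)} := a_{i_1} \otimes \cdots \otimes a_{i_n} \in \cP_{e,n}, \qquad a^{(2)} := a_{j_1} \otimes \cdots \otimes a_{j_m} \in \cP_{e,m},
\]
so that $a = a^{(1)} \cdot_\sigma a^{(2)}$, and then set $p_1 := a^{(1)}$ and $h_1 := b \ast_g a^{(2)}$, which belong to $\cP_{e,n}$ and $\cP_{d+e,m}$ respectively, matching the types required by the lemma.

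Next I would compute both sides slot-by-slot. Writing $c := f \cdot_\sigma b$, slot $t$ of $c$ equals $f_k$ at $t = i_k$ and $b_l$ at $t = j_l$, so the $t$-th tensor factor of $(f \cdot_\sigma b) \ast_g a$ is $g(f_k) \wedge g^c(a_{i_k})$ at $t = i_k$ and $g(b_l) \wedge g^c(a_{j_l})$ at $t = j_l$. On the other hand, $f \ast_g p_1$ has $k$-th factor $g(f_k) \wedge g^c(a_{i_k})$ and $h_1$ has $l$-th factor $g(b_l) \wedge g^c(a_{j_l})$, so interleaving them via $\sigma$ reproduces the left-hand side verbatim. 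For general $a = \sum_i \lambda_i a_i$ written as a linear combination of monomials, the same construction produces $p_i = a_i^{(1)}$ and $h_i = \lambda_i\, b \ast_g a_i^{(2)}$, yielding the claimed sum.

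The only real obstacle is a notational one: in $h_i \cdot_\sigma (f \ast_g p_i)$ the first factor has length $m$ and the second has length $n$, so $\cdot_\sigma$ must be read as ``the length-$n$ factor goes into the positions $\{i_1 < \cdots < i_n\}$ and the length-$m$ factor into $\{j_1 < \cdots < j_m\}$,'' regardless of the written order. Once that convention is fixed, the identity is an immediate unwinding of the componentwise definitions of $\ast_g$ and $\cdot_\sigma$, since both operations act only on the tensor slots (and $\ast_g$ acts independently within each slot).
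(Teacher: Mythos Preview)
Your proof is correct and follows essentially the same approach as the paper: reduce to monomials by bilinearity, split $a$ along $\sigma$ into the piece sitting under $f$ and the piece sitting under $b$, and set $p$ equal to the former and $h$ equal to $b \ast_g$ the latter. Your slot-by-slot verification and your explicit remark about the order convention in $h \cdot_\sigma (f \ast_g p)$ are in fact more careful than the paper's own write-up, which has a minor notational slip in how the split is indexed.
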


\begin{proof}
Both $\cdot_\sigma$ and $\ast_g$ are bilinear, so assume without loss of generality that both $a$ and $b$ are monomials.  Write $a = \alpha_1 \otimes \cdots \otimes \alpha_{n+m}$ and $b = \beta_1 \otimes \cdots \otimes \beta_m$. Suppose $\sigma$ is the split $\{i_1,\dots,i_m\},\{j_1,\dots,j_n\}$ of $[n+m]$.  Furthermore, suppose $[M(d+e)] \setminus g([Md]) = \{\gamma_1,\dots,\gamma_{Me}\}$, let $\alpha_i'$ be the image of $\alpha_i$ under identifying $\bk^{Me}$ with the subspace of $\bk^{M(e+d)}$ spanned by the standard basis vectors $\{v_{\gamma_i}\}$. So if $\alpha_i = w_{j_1} \wedge \cdots \wedge w_{j_e}$, with $\{w_i\}$ the standard basis for $\bk^{Me}$, then $\alpha_i' = v_{\gamma_{j_1}} \wedge \cdots \wedge v_{\gamma_{j_d}}$.

Taking $h_i = g(\beta_1) \wedge \alpha_{i_1}' \otimes \cdots \otimes g(\beta_m) \wedge \alpha_{i_m}'$ and $p_i = \alpha_{j_1} \otimes \cdots \otimes \alpha_{j_n}$ makes the identity valid.
\end{proof}

We can use the two products to define an ideal in $\cP$ as follows,

\begin{definition}
A homogeneous subspace $I \subset \cP$ is an {\bf ideal} if $f \in I$ implies that $f \ast_g h \in I$ and $h \cdot_\sigma f \in I$ for all $h \in \cP$. A subset of elements of $\cP$ generates an ideal $I$ if $I$ is the smallest ideal that contains the subset.
\end{definition}

With this new language, we get an immediate corollary of Lemma \ref{associative}:

\begin{corollary} \label{basis}
If $f_1,f_2,\dots$ generate an ideal $I$, then every element of $I$ can be written as a sum of elements of the form $h \cdot_\sigma (f_i \ast_g a)$ where $a,h \in \cP$. 
\end{corollary}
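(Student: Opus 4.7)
The plan is to let $S \subseteq \cP$ denote the linear span of elements of the form $h \cdot_\sigma (f_i \ast_g a)$ with $h,a \in \cP$, and to show that $S$ is an ideal containing every $f_i$; since $I$ is by definition the smallest ideal containing the generators, this forces $I \subseteq S$, while $S \subseteq I$ is automatic from $I$ being an ideal. Each generator lies in $S$ by inserting units: $\cP_{d_i,0} = \cP_{0,n_i} = \bk$, and $f_i = 1 \cdot_{\sigma_0} (f_i \ast_{\mathrm{id}} 1)$ for the trivial split $\sigma_0$ of $[n_i]$ and any increasing function sending $[Md_i]$ to itself.

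To verify closure I would treat the two kinds of multiplication separately. Closure under $h' \cdot_{\sigma'}(-)$ is immediate from associativity of the shuffle product: $h' \cdot_{\sigma'}\bigl(h \cdot_\sigma (f_i \ast_g a)\bigr)$ regroups as $(h' \cdot_{\sigma''} h) \cdot_{\sigma'''} (f_i \ast_g a)$ for splits $\sigma'', \sigma'''$ determined by $\sigma, \sigma'$, and this is still in $S$. Closure under $(-) \ast_{g'} h'$ is the heart of the argument. Given $(h \cdot_\sigma (f_i \ast_g a)) \ast_{g'} h'$, I would first use the fact that the shuffle product is commutative (after interchanging the two subsets making up $\sigma$) to rewrite the left factor as $(f_i \ast_g a) \cdot_{\tilde\sigma} h$. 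Applying Lemma~\ref{associative} with $f = f_i \ast_g a$ and $b = h$ then produces
\[
\sum_k h_k \cdot_{\tilde\sigma} \bigl((f_i \ast_g a) \ast_{g'} p_k\bigr).
\]
The last step is to re-associate the iterated $\ast$ in order to place $f_i$ outermost: $(f_i \ast_g a) \ast_{g'} p_k = f_i \ast_{g''} (a \ast_{g'''} p_k)$ for increasing functions $g'', g'''$ built from $g$ and $g'$, which rewrites the sum as an element of $S$.

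The main obstacle is this last associativity identity for $\ast$. Matching the two sides slot-by-slot forces, on each of the $n$ tensor factors, the iterated wedge $g'(g(v)) \wedge g'(g^c(w)) \wedge (g')^c(u)$ to equal $g''(v) \wedge (g'')^c\bigl(g'''(w) \wedge (g''')^c(u)\bigr)$, so one must set $g'' = g' \circ g$ and let $g''' \in {\rm Inc}(\bN)$ record how $g'$ interleaves the image of $g^c \colon [Me] \to [M(d+e)]$ with $(g')^c \colon [Me'] \to [M(d+e+e')]$ inside the complement of $g''([Md])$. This is a routine calculation from the definition of $\ast_g$ together with associativity of the wedge product, but it is the one place that demands careful combinatorial bookkeeping; given it, the corollary follows by passing to linear combinations.
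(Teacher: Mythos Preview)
Your argument is correct and follows the paper's intended route: the paper states only that Corollary~\ref{basis} is an immediate corollary of Lemma~\ref{associative} and gives no further details, so your proof is a faithful elaboration rather than a different approach. You have correctly identified and supplied the two auxiliary facts the paper leaves implicit---associativity (and commutativity up to swapping the split) of the shuffle product, and the associativity identity $(f \ast_g a) \ast_{g'} p = f \ast_{g' g}\bigl(a \ast_{g'''} p\bigr)$, which indeed follows from a straightforward check on indices once one sets $g'' = g'g$ and $g''' = ((g'')^c)^{-1}\circ g'g^c$.
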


We wish to use Gr\"obner methods to prove noetherianity of $\cP$, to do this we will work with monomial ideals. In our context, an ideal is a {\bf monomial ideal} if it has a generating set of monomials. Notice, the product of two monomials under our operations is still a monomial. We will show that all monomial ideals are finitely generated.  We will then use this to show that all ideals in $\cP$ are finitely generated. To do this we must first make some definitions and reformulations.
%
%
Each monomial $m \in \cP_{d,n}$ can be encoded as a {\bf reading list} (RL), $S_{d,n} = (S^1,\dots,S^n)$. Where each $S^i$ is an increasing word of length $d$ created from the finite alphabet $[Md]$. In particular, $|S^i| = d$ with $i = 1,\dots,n$. $S^i$ records the indices in tensor position $i$. The subscript indicates which bigraded piece of $\cP$ the monomial is in. Let $\RL$ denote the set of reading lists.

\begin{example}
To get an idea of what this looks like we give some examples encoding in both directions. A basic example would be the monomial $v_1 \wedge v_2 \otimes v_1 \wedge v_3$ corresponds to,
\[
((1,2),(1,3)).
\]
As a more complicated example consider,
\[
((1,5,6),(1,2,4),(1,2,6)).
\]
This corresponds to the monomial $(v_1 \wedge v_5 \wedge v_6) \otimes (v_1 \wedge v_2 \wedge v_4) \otimes (v_1 \wedge v_2 \wedge v_6)$.
\end{example}

Suppose we have a monomial ideal $J$ of  $\cP$ that is not finitely generated. Then there is an infinite list of monomials $m_1,m_2,\dots$ such that the ideal generated by $m_1,\dots,m_i$ does not contain $m_{i+1}$. This list translates to an infinite list of RLs that are incomparable under the following order. Given a RL $S_{d,n} = (S^1,\dots,S^n)$, this corresponds uniquely to a monomial
\[
v_{S^1} \otimes \cdots \otimes v_{S^n}.
\]
We say that $S_{d,n} \leq T_{e,m}$ if and only if $v_{T^1} \otimes \cdots \otimes v_{T^m}$ is in the ideal generated by $v_{S^1} \otimes \cdots \otimes v_{S^n}$. Call this the {\em monomial order} on RLs. Notice the following:

\begin{lemma} \label{mapequiv}
For RLs, $S_{d,n} \leq T_{e,m}$ in the monomial order is equivalent to the existence of a map $f: S_{d,n} \to T_{e,m}$ with the following properties. If $S_{d,n} = (S^1,\dots,S^n)$ and $T_{e,m} =  (T^1,\dots,T^m)$, $f = (f_1,\dots,f_n)$ where $f_i:S_i \to T_{k_i}$ with the following properties:
\begin{enumerate}
\item $k_1 < k_2 < \cdots < k_n$.
\item $f_i(S_i \cap S_j) = f_j(S_i \cap S_j)$, so they agree on overlap.
\item Each $f_i \in {\rm Inc}(\bN)$.
\end{enumerate}
\end{lemma}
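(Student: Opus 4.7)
The plan is to exploit Corollary \ref{basis} together with the explicit formulas for $\ast_g$ and $\cdot_\sigma$ to show that membership of the monomial $v_T$ in the ideal generated by $v_S$ is controlled by two pieces of combinatorial data: (i) a choice of $n$ of the $m$ tensor positions of $T$, namely $k_1<\dots<k_n$, which records where the factors ultimately coming from $v_S$ are placed by $\cdot_\sigma$, and (ii) for each chosen position, an order-preserving injection $f_i$ of the $d$-element index set $S^i$ into the $e$-element index set $T^{k_i}$, which records how $\ast_g$ extends each factor of $v_S$ by wedging in fresh indices. The lemma is precisely the statement that this data is equivalent to the ideal relation, and both directions of the argument proceed by unwinding the operations.

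For the forward direction, assume $S_{d,n}\le T_{e,m}$. Applying Corollary \ref{basis} and comparing reading lists (a monomial's reading list is uniquely determined, so a single term must equal $v_T$), I can write $v_T=\pm h\cdot_\sigma(v_S\ast_g a)$ for a split $\sigma$ of $[m]$ into $\{k_1<\dots<k_n\}$ and its complement, monomials $a$ and $h$, and some $g\in\mathrm{Inc}(\bN)$ with $g([Md])\subseteq[Me]$. Unpacking the definition of $\ast_g$ at each selected position gives $v_{T^{k_i}}=\pm\,v_{g(S^i)}\wedge v_{g^c(a^i)}$, so in particular $g(S^i)\subseteq T^{k_i}$; I then set $f_i:=g|_{S^i}\colon S^i\to T^{k_i}$. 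Conditions (1)--(3) then fall out immediately: (1) from the ordering of the split, (2) because $g$ is a single function and so agrees with itself on overlaps, and (3) because $g$ is globally increasing.

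For the backward direction, given $(k_i,f_i)$ satisfying (1)--(3), I will reverse-engineer the operations. The split $\sigma$ is dictated by the $k_i$'s. Conditions (2) and (3) let the $f_i$'s glue to a single order-preserving partial map on $\bigcup_i S^i$; I extend this to some $g\in\mathrm{Inc}(\bN)$ with $g([Md])\subseteq[Me]$, chosen so that $g([Md])\cap T^{k_i}=f_i(S^i)$ for every $i$. I then define $a^i:=(g^c)^{-1}(T^{k_i}\setminus f_i(S^i))\subseteq[M(e-d)]$, which is well defined by exactly the previous requirement, and let $h$ be the monomial whose $j$-th tensor factor is $v_{T^j}$ for $j\notin\{k_1,\dots,k_n\}$. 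A direct computation from the definitions of $\ast_g$ and $\cdot_\sigma$ then yields $v_T=\pm h\cdot_\sigma(v_S\ast_g a)$, so $v_T$ lies in the ideal generated by $v_S$.

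The hard step will be constructing the extension $g$ in the backward direction: it is not enough to extend the glued partial map increasingly into $[Me]$; I must also steer the additional values $g([Md])\setminus\bigcup_i f_i(S^i)$ away from $\bigcup_i T^{k_i}$, so that no spurious element of $g([Md])$ lies inside some $T^{k_i}$ outside of $f_i(S^i)$. This is a combinatorial extension problem for increasing functions with forbidden target values, and the dimension slack built into the ambient $[Me]$ compared to $[Md]$ is what makes such an extension available. All remaining details are bookkeeping on reading lists, and the signs arising from the antisymmetry of the wedge are immaterial because we are working inside a monomial ideal, which is closed under scalar multiplication.
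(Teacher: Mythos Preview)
Your backward direction has a genuine gap, and it runs deeper than the gluing step. Take $M=2$, $d=1$, $n=m=2$, $e=2$, $S^1=\{1\}$, $S^2=\{2\}$, $T^1=T^2=\{1,2\}$, $k_i=i$, $f_1(1)=1$, $f_2(2)=2$. Conditions (1)--(3) hold and the $f_i$ even glue to the identity on $[2]$, yet $(v_1\wedge v_2)\otimes(v_1\wedge v_2)$ is \emph{not} in the ideal generated by $v_1\otimes v_2$: any admissible $g$ with $g(1),g(2)\in\{1,2\}$ forces $g([2])=\{1,2\}$, whence $g^c$ takes values in $\{3,4\}$ and can never furnish the missing index in either tensor slot. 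So the ``hard step'' you flag---steering the remaining values of $g([Md])$ away from $\bigcup_i(T^{k_i}\setminus f_i(S^i))$---is not merely hard but impossible here, and the lemma as printed is in fact false. Separately, your claim that (2) and (3) alone make the $f_i$ glue to an order-preserving partial map also fails: with $S^1=\{1,3\}$, $S^2=\{2,4\}$, $f_1(1)=1$, $f_1(3)=5$, $f_2(2)=3$, $f_2(4)=4$, both conditions hold but the glued map sends $3\mapsto 5$ and $4\mapsto 4$.

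This reflects a typo in the paper rather than a flaw in your strategy: the proof of Lemma~\ref{nnotbounded} and the worked example following it both invoke a ``property~(4) in Lemma~\ref{mapequiv}'' that is absent from the printed enumeration. The missing hypothesis is precisely what you were trying to manufacture---that the $f_i$ are all restrictions of a single $g\in\mathrm{Inc}(\bN)$ with $g([Md])\subseteq[Me]$ and $g([Md])\cap T^{k_i}=g(S^i)$ for every $i$. Once that is built into the statement, both directions really are immediate from the definitions of $\ast_g$ and $\cdot_\sigma$, which is all the paper's one-line proof asserts; your route through Corollary~\ref{basis} then becomes a correct and usefully explicit rendering of the same unwinding.
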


\begin{proof}
This follows easily from definition. The first property means we must map the tensor positions in order. The last property is necessary because we have $g \in {\rm Inc}(\bN)$.
\end{proof}


We will use this equivalence often. We define a new relation on the set $\RL$ $\leq_{RL}$, where $S_{d,n} \leq_{RL} T_{e,m}$ if a map as described in Lemma \ref{mapequiv} exists from $S_{d,n} \to T_{e,m}$. This is easily seen to be a partial order. The above discussion can be summarized as follows:

\begin{corollary} \label{antichain}
An infinite list of monomials $m_1,m_2,\dots$ in $\cP$ such that $m_{i+1}$ is not contained in the ideal generated by $m_1,\dots,m_{i}$ induces an infinite chain of incomparable RLs with respect to $\leq_{RL}$.
\end{corollary}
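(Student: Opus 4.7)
The plan is to deduce the corollary almost immediately from Lemma \ref{mapequiv} together with the definition of the monomial order, with only a small combinatorial step needed at the end to obtain pairwise incomparability.

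First I would translate the ideal-theoretic hypothesis. Let $S^{(i)} \in \RL$ denote the reading list associated to the monomial $m_i$. By definition of the monomial order on RLs, the inequality $S^{(j)} \leq S^{(i+1)}$ is equivalent to $m_{i+1}$ lying in the principal ideal generated by the single element $m_j$. Since that principal ideal is contained in the full ideal $\langle m_1, \ldots, m_i \rangle$, the hypothesis that $m_{i+1}$ is not in $\langle m_1, \ldots, m_i \rangle$ forces $S^{(j)} \not\leq S^{(i+1)}$ for every $j \leq i$. Now invoking Lemma \ref{mapequiv}, which identifies the monomial order with the combinatorial relation $\leq_{RL}$, we conclude that $S^{(i)} \not\leq_{RL} S^{(j)}$ whenever $i < j$. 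This already produces a \emph{bad sequence} for $\leq_{RL}$: no later term dominates an earlier one.

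To upgrade the bad sequence to a genuine infinite chain of pairwise incomparable RLs, I would exploit the following simple property of $\leq_{RL}$: if $S_{d,n} \leq_{RL} T_{e,m}$ then necessarily $d \leq e$ and $n \leq m$, since each component map $f_i$ injects $S^i$ into some $T^{k_i}$ and the $k_i$ are strictly increasing. In particular, $\leq_{RL}$ admits no infinite strictly descending chain. A standard inductive thinning argument then applies: from the bad sequence $(S^{(i)})$, iteratively pass to a subsequence in which each newly chosen term is also incomparable to every previously chosen one, using DCC to guarantee this extraction never terminates. The output is an infinite antichain in $\RL$ with respect to $\leq_{RL}$.

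The only step with genuine mathematical content is the translation via Lemma \ref{mapequiv}, which has already been established; everything else is routine bookkeeping. The mildest obstacle is keeping track of the distinction between the ``one-sided'' bad-sequence property produced directly by the hypothesis and the ``two-sided'' antichain property required by the statement, which is why the DCC observation is needed.
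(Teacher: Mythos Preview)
Your argument is correct, and in fact more careful than the paper's own treatment. The paper offers no explicit proof of this corollary at all: it is presented as a summary of the preceding discussion, so the intended argument is simply the translation step you give in your first paragraph, via the definition of the monomial order together with Lemma~\ref{mapequiv}.

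Where you go beyond the paper is in distinguishing the \emph{bad sequence} produced directly by the hypothesis (only $S^{(i)} \not\leq_{RL} S^{(j)}$ for $i<j$) from a genuine two-sided antichain, and then invoking the easy DCC for $\leq_{RL}$ to thin the sequence down. The paper silently conflates these two notions, both here and in the proof of Theorem~\ref{monfingen}. For the downstream application this conflation is harmless, since Theorem~\ref{RLNoeth} together with the DCC you observe is exactly the statement that $(\RL,\leq_{RL})$ is a well-quasi-order, and a bad sequence already contradicts that. But as a proof of the corollary \emph{as stated}, your extra step is warranted and the paper's implicit argument has a small gap that you have filled.
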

%

%

Given this infinite antichain of RLs from Corollary \ref{antichain} we will show that either $n$ or $d$ must be bounded.

\begin{lemma} \label{ndbounded}
Given an infinite antichain of reading lists under $\leq_{RL}$,
\[
S_{d_1,n_1},S_{d_2,n_2},\dots
 \]
either the $n_i$ or the $d_i$ must be bounded.
\end{lemma}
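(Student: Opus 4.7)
The plan is to argue the contrapositive: assume both $(n_i)$ and $(d_i)$ are unbounded and produce indices $i<j$ with $S_{d_i,n_i}\leq_{RL} S_{d_j,n_j}$, contradicting the antichain hypothesis. After thinning to a subsequence I may assume $n_i\to\infty$ and $d_i\to\infty$, both weakly increasing (using Dickson's lemma on $\bN^2$ applied to the pair $(d_i,n_i)$).

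The first step is to recast each reading list in a form suitable for well-quasi-order reasoning. Given $S_{d,n}$, let $U=S^1\cup\cdots\cup S^n\subseteq[Md]$, of size $k\leq Md$, and for each $u\in U$ record its \emph{position pattern} $P(u)=\{i:u\in S^i\}\subseteq[n]$. Listing $U$ as $u_1<\cdots<u_k$ turns $S_{d,n}$ into a word $(P(u_1),\ldots,P(u_k))$ over the alphabet of nonempty subsets of $[n]$, subject to the row-sum constraint that $|\{j:i\in P(u_j)\}|=d$ for every $i\in[n]$. By Lemma \ref{mapequiv}, the relation $S_{d_i,n_i}\leq_{RL} S_{d_j,n_j}$ is equivalent to choosing tensor positions $k_1<\cdots<k_{n_i}$ in $[n_j]$ together with an increasing selection of indices from $U_j$ whose patterns, restricted via the injection $[n_i]\hookrightarrow[n_j]$ sending $l\mapsto k_l$, reproduce those of $S_{d_i,n_i}$.

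With this encoding the plan is a two-stage Higman/Ramsey argument. First, use the growth of $n_j$ together with Ramsey-type thinning to select tensor positions $k_1<\cdots<k_{n_i}$ in $[n_j]$ whose joint pattern data mimics the structure in the source reading list. Second, with these tensor positions fixed, both reading lists now yield words over a common finite alphabet (subsets of $[n_i]$), so Higman's lemma applies to the columns and produces the required increasing subword embedding with matching restricted patterns, hence the embedding required by Lemma \ref{mapequiv}.

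The main obstacle is that the natural alphabets (nonempty subsets of $[n_i]$) vary with $i$, so Higman's lemma does not apply off the shelf across the sequence. The key to overcoming this is the growth of $d_j$: the row-sum condition forces a total of $n_j d_j$ incidences, and when $d_j\gg n_i$ pigeonhole produces many indices in $U_j$ whose patterns, restricted to $\{k_1,\ldots,k_{n_i}\}$, coincide — supplying the repetitions Higman requires. The delicate step, and what I expect to be the main difficulty, is carrying out this pigeonhole while simultaneously preserving the row-sum equality for the extracted subreading-list (its row sums must be exactly $d_i$, not merely bounded), which I anticipate handling by induction on $n_i$ and by reserving disjoint blocks of indices in $U_j$ to realize each column pattern of the source.
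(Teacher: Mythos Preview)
Your proposal is a plan rather than a proof, and the plan has a real gap at its core. In your second stage you write that once tensor positions $k_1<\cdots<k_{n_i}$ are fixed, ``Higman's lemma applies to the columns and produces the required increasing subword embedding.'' But Higman's lemma is a statement about infinite sequences of words; it does not tell you that one \emph{particular} word embeds into another \emph{particular} word. At that point you have only two objects---the source $S_{d_i,n_i}$ and the restricted target---and you need a concrete reason why the former embeds in the latter. Your remarks about pigeonhole on the $n_jd_j$ incidences and ``reserving disjoint blocks'' gesture in the right direction but do not constitute an argument; in particular you never explain how the tensor positions $k_1,\dots,k_{n_i}$ are chosen in the first place. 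Separately, your worry about row sums is misplaced: Lemma~\ref{mapequiv} only asks for an order-preserving injection of indices with $f(S^l)\subseteq T^{k_l}$ for each $l$; there is no requirement that the image fill out $T^{k_l}$ or that any extracted sublist have row sums exactly $d_i$.

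The paper's proof is a single direct pigeonhole and avoids Higman, Ramsey, and induction on $n_i$ entirely. Fix the first element $S_{d_1,n_1}$. For any later $T_{d_j,n_j}$ with $d_j>2Md_1$, each list $T^l$ contains $\binom{d_j}{Md_1}$ sublists of size $Md_1$, while there are only $\binom{Md_j}{Md_1}$ possible such sublists in $[Md_j]$. An elementary estimate shows $\binom{Md_j}{Md_1}/\binom{d_j}{Md_1}<M^{Md_1}2^{Md_1-1}$, a bound \emph{independent of $d_j$}. Hence once $n_j>n_1 M^{Md_1}2^{Md_1-1}$, some fixed $Md_1$-element set $\{j_1<\cdots<j_{Md_1}\}$ lies in at least $n_1$ of the lists $T^{i_1},\dots,T^{i_{n_1}}$. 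Since every index appearing in $S_{d_1,n_1}$ lies in $[Md_1]$, the single increasing map $k\mapsto j_k$ sends each $S^l$ into $T^{i_l}$, giving the RL-embedding and the contradiction. This is precisely the step your outline lacks: a uniform threshold, depending only on $(d_1,n_1,M)$, that forces the target to contain a large common intersection across $n_1$ positions.
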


\begin{proof}
Suppose this is not the case. Fix $S_{d_1,n_1}$ in this antichain, i.e. the first element. If both $d_j$ and $n_j$ are unbounded, we can assume $d_j > 2Md_1$. For any RL $T_{d_j,n_j} = (T^1,T^2,\dots,T^{n_j})$ with $d_j > 2Md_1$ in our antichain, there are $\binom{d_j}{Md_1}$ unique sub-lists of size $Md_1$ in each list $T^i$ of size $d_j$ in $T_{d_j,n_j}$. There are a total of $\binom{Md_j}{Md_1}$ possible lists of this size that could occur in any $T^i$. Hence if 
\[
n_j > n_1 \tfrac{\binom{Md_j}{Md_1}}{\binom{d_j}{Md_1}},
\]
by the pigeon hole principal we will have at least $n_1$ tensor positions (lists) whose intersection has size greater than $Md_1$. 

To see this, notice if $n_j >  n_1 \tfrac{\binom{Md_j}{Md_1}}{\binom{d_j}{Md_1}}$, for each list of size $d_j$, we have $\binom{d_j}{Md_1}$ different sub-lists of size $Md_1$.  If we have a hole for each list of size $d_1$, there are $\binom{Md_j}{Md_1}$ holes. For each list of side $d_j$, we place $\binom{d_j}{Md_1}$ pigeons into distinct holes. The holes are distinct because there are no repeated numbers. Each of these pigeons represents a tensor position that the list appears in. If there are more than $n_1 \tfrac{\binom{Md_j}{Md_1}}{\binom{d_j}{Md_1}}$ lists, this implies we place more than $n_1 \binom{Md_j}{Md_1}$ pigeons in the $\binom{Md_j}{Md_1}$ holes.  The pigeon hole principal implies that there is one hole with at least $n_1$ distinct elements in it.

Hence there is some list of size $Md_1$ that occurs in at least $n_1$ different tensor positions. Such a list cannot occur twice in the same tensor position because tensor positions cannot contain repeated numbers. As a result, there are at least $n_1$ tensor positions whose intersection contains at least $Md_1$ numbers. 

We will show this lower bound is independent of $d_j$. In particular,
\[
n_1 M^{Md_1} 2^{Md_1-1} > n_1 \tfrac{\binom{Md_j}{Md_1}}{\binom{d_j}{Md_1}},
\]
This is because
\[
 \tfrac{\binom{Md_j}{Md_1}}{\binom{d_j}{Md_1}} = \tfrac{(Md_j)(Md_j-1)\cdots (Md_j-Md_1+1)}{(d_j)(d_j-1)\cdots (d_j-Md_1+1)} = \tfrac{M^{Md_1} (d_j)(d_j-\tfrac{1}{M})\cdots (d_j-\tfrac{Md_1}{M}+\tfrac{1}{M})}{(d_j)(d_j-1)\cdots (d_j-Md_1+1)}.
 \]
If we pair each of these terms as $\tfrac{d_j-\tfrac{a}{M}}{d_j - a}$ for $a = 0,\dots,(Md_1-1)$, we see that the largest of the terms is
\[
\tfrac{d_j - \tfrac{Md_1-1}{M}}{d_j-(Md_1-1)}.
\]
This can easily be checked because
\[
\tfrac{d_j-\tfrac{a}{M}}{d_j - a} < \tfrac{d_j-\tfrac{a+1}{M}}{d_j - (a+1)},
\]
reduces to the inequality
\[
\tfrac{d_j}{M} < d_j.
\]
Which is clearly true so long as $d_j$ is positive, which it is. We now claim,
\[
1 \leq \tfrac{d_j - \tfrac{Md_1-1}{M}}{d_j-(Md_1-1)} < 2.
\]
To see this we clear denominators since $d_j > 2Md_1$ we know the denominator is nonzero. So this inequality is equivalent to
\begin{equation} \label{ineq}
d_j - Md_1 + 1 \leq d_j - \tfrac{Md_1}{M} + \tfrac{1}{M} < 2d_j - 2Md_1 + 2.
\end{equation}
The first inequality reduces to
\[
\tfrac{M-1}{M} \leq (M-1) d_1,
\]
which is clearly true. The second inequality reduces to
\[
(2M-1) d_1 -\tfrac{2M-1}{M} < d_j.
\]
However we took $d_j > 2Md_1$, so this is also true. This implies that every term in the product is in the interval $[1,2)$. Clearly every term of the form
\[
\tfrac{d_j-\tfrac{a}{M}}{d_j - a},
\]
is greater than $1$ for $a = 1,\dots,(d_1-1)$ and every term is less than $\tfrac{d_j - \tfrac{d_1-1}{M}}{d_j-(d_1-1)}$ which we showed is less than $2$. Hence 
\[
1 \leq \tfrac{(d_j)(d_j-\tfrac{1}{M})\cdots (d_j-\tfrac{Md_1}{M}+\tfrac{1}{M})}{(d_j)(d_j-1)\cdots (d_j-Md_1+1)} < 2^{Md_1-1}.
\]
Notice this bound applies regardless of $d_j$ so long as $d_j > 2Md_1$. This implies
\[
\tfrac{M^{Md_1} (d_j)(d_j-\tfrac{1}{2})\cdots (d_j-\tfrac{d_1}{2}+\tfrac{1}{2})}{(d_j)(d_j-1)\cdots (d_j-d_1+1)} \leq M^{Md_1} 2^{Md_1-1}.
\]
So if $n_j > n_1 M^{Md_1} 2^{Md_1-1}$ and $d_j > 2Md_1$, we can find $n_1$ tensor positions whose common intersection has size at least $Md_1$. If $n_j$ and $d_j$ are unbounded for any sequence if we fix some $d_1$ and $n_1$, this will always occur. 

Say the $n_1$ tensor positions we find are $i_1,\dots,i_{n_1}$ and $T^{i_1} \cap \cdots \cap T^{i_{n_1}}$ contains the $Md_1$ elements $j_1 < \cdots < j_{Md_1}$. Then we have a clear map $S_{d_1,n_1} \to T_{d_j,n_j}$ where we map $S^n \to T^{i_n}$ and send $i \mapsto j_i$. This is an order preserving injection that satisfies all the properties of Lemma \ref{mapequiv} and implies $S_{d_1,n_1} \leq T_{d_j,n_j}$ which is a contradiction.
\end{proof}


\begin{remark}
We believe the idea in Lemma \ref{ndbounded} could have further applications in showing various posets are noetherian. In particular, the idea is to fix a small element in any given antichain and assume that the size of the elements in this antichain grow arbitrarily. With this assumption we prove the resulting elements are forced to eventually contain a structure that resembles the fixed element. We then deduce that the size of the elements must be bounded in some sense. This often drastically simplifies the problem as we will see below.
\end{remark}

This leaves us with two cases. Either $d_i$ or $n_i$ is bounded. We will show that both lead to a contradiction.

\begin{lemma} \label{nnotbounded}
Given an infinite antichain in $(\RL,\leq_{RL})$,
\[
S_{d_1,n_1}, S_{d_2,n_2},\dots
\]
$n_i$ cannot be bounded.
\end{lemma}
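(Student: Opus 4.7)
The plan is to reduce to Higman's lemma after fixing the number of tensor positions. Assuming for contradiction that $n_i$ is bounded, I first pass by pigeonhole to an infinite subsequence on which $n_i = n$ is constant. Because $n$ is fixed, the condition $k_1 < \cdots < k_n$ in Lemma \ref{mapequiv} forces $k_j = j$, so $S_{d,n} \leq_{RL} T_{e,n}$ is equivalent to the existence of increasing injections $f_j \colon S^j \hookrightarrow T^j$ for each $j \in [n]$ which agree on overlaps.

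Next I would encode each such RL as a finite word over the finite alphabet $\cA_n := 2^{[n]} \setminus \{\emptyset\}$ partially ordered by inclusion. Writing $N_S := S^1 \cup \cdots \cup S^n = \{x_1 < \cdots < x_p\}$, set $a_i := \{j \in [n] : x_i \in S^j\} \in \cA_n$ and take $w_S := a_1 \cdots a_p \in \cA_n^\ast$. Since $(\cA_n, \subseteq)$ is a finite poset (hence trivially a well-partial-order), Higman's lemma tells us that the subword-embedding order $\leq_H$ on $\cA_n^\ast$---defined by $w \leq_H v$ iff there is an increasing injection $\psi$ of positions with $w_i \subseteq v_{\psi(i)}$---is a well-partial-order.

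The key step is to verify $w_S \leq_H w_T \Longrightarrow S_{d,n} \leq_{RL} T_{e,n}$. Given $\psi$, I would define $\phi \colon N_S \to N_T$ by $\phi(x_i) := y_{\psi(i)}$ (with $N_T = \{y_1 < \cdots < y_q\}$), which is strictly increasing; the letter inclusion $a_i \subseteq b_{\psi(i)}$ gives ``$x_i \in S^j \Rightarrow \phi(x_i) \in T^j$'', so the restrictions $f_j := \phi|_{S^j}$ are increasing injections into $T^j$ that agree on overlaps since they come from one global $\phi$. This inclusion together with Higman contradicts the supposed infinite antichain. The one point needing genuine care is precisely this bridge: a witness for $\leq_{RL}$ a priori consists of individually-monotone $f_j$, not of a globally monotone map, so one might fear the word order is too restrictive to capture $\leq_{RL}$---but the construction above shows any word embedding does produce a global $\phi$, and everything else is routine bookkeeping.
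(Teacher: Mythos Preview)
Your overall strategy—pass to constant $n$, encode each RL as a word over a finite poset, and invoke Higman—is exactly the simplification the paper itself suggests in the remark following its Kruskal-based proof. However, your specific encoding has a genuine gap.

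Condition (3) of Lemma~\ref{mapequiv} asks that each $f_j$ lie in $\mathrm{Inc}(\bN)$. Since every $\rho\in\mathrm{Inc}(\bN)$ satisfies $\rho(k)\geq k$, this means your global map $\phi$ must satisfy $\phi(x)\geq x$, not merely be strictly increasing on $N_S$. Your letters $a_i=\{j:x_i\in S^j\}$ record only the incidence pattern and discard the actual values $x_i$, so nothing forces $y_{\psi(i)}\geq x_i$. Concretely, with $n=1$ and $M\geq 3$, the RLs $S=((3))$ and $T=((1))$ both encode as the one-letter word $(\{1\})$, so $w_S\leq_H w_T$; but $S\not\leq_{RL} T$, because the only candidate $3\mapsto 1$ is not the restriction of any element of $\mathrm{Inc}(\bN)$. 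Thus your claimed bridge $w_S\leq_H w_T\Rightarrow S\leq_{RL} T$ is false as stated.

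The repair is short: enlarge the alphabet to $\bZ_{\geq 1}\times\bigl(2^{[n]}\setminus\{\emptyset\}\bigr)$ with the product order, and let the $i$th letter of $w_S$ be $(x_i,a_i)$. This alphabet is still a well-quasi-order by Dickson's lemma, so Higman applies; now a Higman embedding $\psi$ gives $x_i\leq y_{\psi(i)}$ in the first coordinate, hence $\phi(x_i)=y_{\psi(i)}\geq x_i$, and your $\phi$ genuinely extends to an element of $\mathrm{Inc}(\bN)$. This extra coordinate is precisely the ``first label'' $k$ in the paper's tree encoding, and once you add it your argument is both correct and cleaner than the paper's Kruskal route.
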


\begin{proof}
Assume $n_i$ is bounded. Then since our antichain is infinite, this implies that there must be some infinite subchain of our antichain with $n_i$ constant. Restrict our attention to this sub-antichain with $n_i = n$.

We will now embed each $S_{d_i,n}$ of this antichain as a labeled tree and derive a contradiction via Kruskal's tree theorem.

Send each $S_{d_i,n}$ to the tree $T_{S_{d_i,n}}$ with a root vertex labeled $(0,0,(n+1,n+1,\dots,n+1))$. Define the function $\psi(j,S_{d_i,n})$, which takes as an input some RL $S_{d_i,n}$ and some element $j \in [Md_i]$ and returns the finite list of size $n$ with a $k$ in position $k$ if $j$ appears in $S^k$ and a zero if $j$ does not appear in $S^k$. This encodes which of the tensor positions $j$ appears in.

There will be $n$ branches off of the root. Branch $j$ will have $Md_i$ vertices. Vertex $k$ of branch $j$ will be labeled by $(k,j,\psi(k,S_{d_i,n}))$.  Order the first label with the standard order on $\bZ_{\geq 0}$. Order the last two labels with the componentwise subsequence order, in this case the quasi-well-order will be equality. This product is a quasi-well-order by Dickson's lemma because the alphabets for the last two labels are finite.  Hence, each component is a quasi-well order and Dickson's lemma tells us that the finite product of quasi-well orders compared componentwise is also a quasi-well order.

Notice that this is an injective mapping from RLs to trees because we can easily recover $S_{d,n}$ from $T_S$ by reading off vertices. 

Furthermore, using the order described in Kruskal's tree theorem if $T_S \leq T_W$, this implies that $S \leq W$.  We must send $S^i$ to $W^i$ because of the second label. Also, for any $k$, $\psi(k,S_{d_i,n})$ is fixed and we have that $T_S \leq T_W$ if every vertex $v$ maps to some vertex $F(v)$ with $v \leq F(v)$. In combination with the first label, this implies that every number $k$ maps to some number $m \geq k$ with $\psi(k,S_{d_i,n}) = \psi(m,S_{d_i,n})$. As a result, in each of the branches where $k$ occurs, there is some number $m \geq k$ that it can map to because $m$ will occur in all of the remaining branches in which $k$ occurs.

Additionally, we must map branches to branches. Hence if a vertex $v$ with first label $k$ maps to a vertex $F(v)$ with first label $m_k$, we have $m_1 < m_2 < \cdots < m_{Md_i}$, so the map on indices is in the monoid of increasing functions.

Define a mapping inductively form $S \to W$. Begin by sending $1$ to the minimal first label $m$ that occurs for all the vertices corresponding to a vertex with first label $1$. That is, let $\{v_1,\dots,v_\ell\}$ be all the vertices in the tree $T_S$ with first entry $1$. 
Each $v_i$ has an image $F(v_i)$ in $T_W$. Out of all the vertices $\{F(v_1),\dots,F(v_\ell)\}$, send $1$ to the minimum first entry that occurs, call it $\alpha_1$. By construction if $1$ occurs in any branch, so must $\alpha_1$. Hence if $1$ occurs in $S^j$ it has a well defined image in the corresponding $W^j$. Put $f(1) = \alpha_1$.

Now repeat the same procedure for $2$. The element we send $2$ to cannot be $\alpha_1$ because even if $1$ and $2$ occur in all the same branches, any vertex corresponding to a $1$ occurs earlier in the branch that the vertex corresponding to a $2$ and we preserve this order. Hence if $\alpha_1$ is the minimal element for $2$, this would imply $\alpha_1$ is not minimal for $1$. Continue in this way until we have a mapping of all the numbers occurring in $S$.

By construction, two numbers could map to the same $\beta_j$ if and only if they occur in exactly the same branches. So $F$ being well defined implies that $f$ is well defined, i.e. that every element has an image.

Furthermore, $f$ is a map of RLs because if a vertex with first entry $j$ is mapped to another vertex with first entry $m$, $m$ must occur in all of the branches that $j$ does. Hence when we map $j$ to $m$, $j$ has an image in each restriction. The map also satisfies property (4) in Lemma \ref{mapequiv} because we must stay within a branch and we can only map a number to another number that is greater than or equal to it. Finally, $S^j$ must map into $W^j$ because the third label on each vertex must be equal.

The contrapositive implies that our infinite antichain of RLs yields an infinite antichain of trees. This contradicts Kruskal's tree theorem.
\end{proof}

\begin{remark}
We use Kruskal's tree theorem because it provides a more intuitive picture, but Higman's lemma \cite[Theorem 1.3]{D} would suffice. We could encode each branch as an element of a quasi-well-ordered set and then view the trees as words of length $n$ over this quasi-well-ordered set. The mapping of trees in this context is equivalent to one of these words being a subsequence of the other.
\end{remark}

\begin{example}
To see this proof in action, consider the following example. Suppose $n = 3$ is fixed and $M = 2$. Given the two trees
\begin{equation}
\xymatrix @R=1pc{
&{(1,1,(1,0,3))} \ar@{-}[r] & {(2,1,(1,2,0))} \ar@{-}[r] &{(3,1,(0,2,0))} \ar@{-}[r] & {(4,1,(0,0,3))}\\
(0,0,(4,4,4)) \ar@{-}[ur]\ar@{-}[r]\ar@{-}[dr] &{(1,2,(1,0,3))} \ar@{-}[r] &(2,2,(1,2,0)) \ar@{-}[r] &(3,2,(0,2,0))\ar@{-}[r]& {(4,2,(0,0,3))}\\
&(1,3,(1,0,3)) \ar@{-}[r]  &(2,3,(1,2,0)) \ar@{-}[r] &(3,3,(0,2,0))\ar@{-}[r]&(4,3,(0,0,3))
}
\end{equation}
and
{\small
\begin{equation}
\xymatrix @C-=0.2cm @R=1pc{
&{(1,1,(1,2,0))} \ar@{-}[r] & {(2,1,(1,0,3))} \ar@{-}[r] & (3,1,(1,2,0)) \ar@{-}[r] &{(4,1,(0,2,0))} \ar@{-}[r] & {(5,1,(0,0,3))} \ar@{-}[r] & (6,1,(0,0,3)) \\
(0,0,(4,4,4)) \ar@{-}[ur]\ar@{-}[r]\ar@{-}[dr] &{(1,2,(1,2,0))} \ar@{-}[r] & {(2,2,(1,0,3))} \ar@{-}[r] & (3,2,(1,2,0)) \ar@{-}[r] &{(4,2,(0,2,0))} \ar@{-}[r] & {(5,2,(0,0,3))} \ar@{-}[r] & (6,2,(0,0,3)) \\
&{(1,3,(1,2,0))} \ar@{-}[r] & {(2,3,(1,0,3))} \ar@{-}[r] & (3,3,(1,2,0)) \ar@{-}[r] &{(4,3,(0,2,0))} \ar@{-}[r] & {(5,3,(0,0,3))} \ar@{-}[r] & (6,3,(0,0,3)) \\}.
\end{equation}}
Call these $T_1$ and $T_2$. It is easy to read off their corresponding RLs. $T_1$ has RL $S_1 = ((1,2),(2,3),(1,4))$ and $T_2$ has RL $S_2 = ((1,2,3),(1,3,4),(2,5,6))$. Notice $T_1 \leq T_2$ in the order described in Kruskal's tree theorem where we map the vertices as below
\begin{align*}
(1,1,(1,0,3)) \mapsto (2,1,(1,0,3)) \qquad \qquad \qquad (2,1,(1,2,0)) \mapsto (3,1,(1,2,0))\\
(3,1,(0,2,0)) \mapsto (4,1,(0,2,0)) \qquad \qquad \qquad (4,1,(0,0,3)) \mapsto (5,1,(0,0,3))\\
(1,2,(1,0,3)) \mapsto (2,1,(1,0,3)) \qquad \qquad \qquad (2,2,(1,2,0)) \mapsto (3,2,(1,2,0))\\
(3,2,(0,2,0)) \mapsto (4,2,(0,2,0)) \qquad \qquad \qquad (4,2,(0,0,3)) \mapsto (6,2,(0,0,3))\\
(1,3,(1,0,3)) \mapsto (2,3,(1,0,3)) \qquad \qquad \qquad (2,3,(1,2,0)) \mapsto (3,3,(1,2,0))\\
(3,3,(0,2,0)) \mapsto (4,3,(0,2,0)) \qquad \qquad \qquad (4,3,(0,0,3)) \mapsto (5,3,(0,0,3))
\end{align*}
There can be multiple embeddings, but we choose one of them. This gives us a map from $S_1 \to S_2$ inductively as described in the proof. We see that $1$ only maps to a vertex with first label $2$.  So we let $f(1) = 2$.  Now we see that $2$ only maps to vertices with first label $3$. So $f(2) = 3$. Continuing we have $f(3) = 4$.  Now $4$ maps to vertices with different first labels, the set of first labels is $\{5,6\}$.  We then map $4$ to the minimal such label that has not yet been used, so $f(4) = 5$.

This map $f$ induces a map from each component of $S_1$ to each component of $S_2$, so we can easily define $f_i: S_1^i \to S_2^i$ by restriction. Clearly this satisfies property $(1)$ of Lemma \ref{mapequiv}. Furthermore, $f$ satisfies properties (2) and (3) of Lemma \ref{mapequiv} because we defined it via restriction and $f$ satisfies property $(4)$ by construction.

As described above this gives us
\[
[(v_1 \wedge v_2) \otimes (v_2 \wedge v_3) \otimes (v_1 \wedge v_4)] \ast_f (v_1 \otimes v_1 \otimes v_2) = (v_2 \wedge v_3 \wedge v_1) \otimes (v_3 \wedge v_4 \wedge v_1) \otimes (v_2 \wedge v_5 \wedge v_6)
\]
Where $f$ is the map found above.
\end{example}

Lemma \ref{nnotbounded} implies that in our infinite antichain we must have $d_i$ bounded. Furthermore, because this is an infinite chain and only finitely many $d_i$ can occur, we can find an infinite subchain with a fixed $d$.

This implies that for this $d$, we have an infinite antichain in
\[
\bigoplus_{n\geq 0}  \left(\bigwedge^d \bk^{Md} \right)^{\otimes n}.
\]
However there are $\beta_d = \binom{Md}{d}$ basis vectors for $\bigwedge^d \bk^{Md}$, order them in some way as $\{e_1,\dots,e_{\beta_d}\}$. Then every monomial in this algebra is a word in the $e_i$. As there are finitely many basis vectors, Higman's lemma implies that for any infinite sequence of such words, two are comparable. Hence we cannot possibly have an infinite antichain.

This discussion proves the following,

\begin{lemma} \label{dnotbounded}
Given an infinite antichain in $(\RL,\leq_{RL})$,
\[
S_{d_1,n_1},S_{d_2,n_2},\dots
\]
$d_i$ cannot be bounded.
\end{lemma}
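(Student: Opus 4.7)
The plan is to derive a contradiction by assuming the $d_i$ are bounded, combining Lemma \ref{ndbounded} with Lemma \ref{nnotbounded}. By Lemma \ref{ndbounded}, any infinite antichain in $(\RL,\leq_{RL})$ must have either $n_i$ or $d_i$ bounded; Lemma \ref{nnotbounded} rules out bounded $n_i$, so in an inherent antichain the $d_i$ must be bounded. The present lemma should then close off the remaining case by showing that bounded $d_i$ is also impossible, thereby showing no infinite antichain exists at all.

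First I would pass to a sub-antichain of fixed exterior degree. If $d_i \leq D$ uniformly, then by pigeonhole some particular value $d \leq D$ occurs infinitely often, so we extract an infinite sub-antichain lying in
\[
\bigoplus_{n \geq 0} \bigl(\textstyle\bigwedge^d \bk^{Md}\bigr)^{\otimes n}.
\]
The vector space $\bigwedge^d \bk^{Md}$ has a finite standard basis, namely the $\beta_d = \binom{Md}{d}$ pure wedges $e_I := v_{i_1}\wedge\cdots\wedge v_{i_d}$ indexed by $d$-subsets $I \subseteq [Md]$. Each monomial in the sub-antichain is then literally a word of length $n$ in the finite alphabet $\{e_I\}_{I}$.

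Next I would compare orderings. I claim that the usual subword (subsequence) order on these finite-alphabet words refines $\leq_{RL}$: if $w_1 \otimes \cdots \otimes w_n$ is a subsequence of $w_1' \otimes \cdots \otimes w_m'$ via an increasing injection of tensor positions $\phi:[n]\to[m]$ with $w_i = w_{\phi(i)}'$, then taking each $f_i$ to be the identity on the underlying subset $S^i \subseteq [Md]$ produces a valid witness in the sense of Lemma \ref{mapequiv} (conditions (1)--(3) hold trivially since the identity is increasing and the $f_i$ agree on overlaps). Consequently any infinite antichain under $\leq_{RL}$ is also an infinite antichain under subword order.

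The conclusion then follows from Higman's lemma: any infinite sequence of words over a finite alphabet contains a pair comparable under subword order, so no such antichain can exist, a contradiction. The only real point to verify carefully is the refinement claim above; this is the one place where one must be mindful, since $\leq_{RL}$ allows arbitrary increasing relabelings of indices while the subword order uses the identity relabeling, so the refinement goes precisely in the direction we need. Everything else is a direct application of pigeonhole and Higman's lemma.
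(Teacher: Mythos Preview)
Your proposal is correct and follows essentially the same approach as the paper: pass by pigeonhole to a fixed $d$, identify monomials with words over the finite alphabet $\{e_I : I \in \binom{[Md]}{d}\}$, and apply Higman's lemma. You are in fact slightly more careful than the paper, which leaves implicit the verification that subword comparability implies $\leq_{RL}$-comparability (via the identity relabeling in Lemma~\ref{mapequiv}).
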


The results above cumulatively show,

\begin{theorem} \label{RLNoeth}
The poset $(\RL,\leq_{RL})$ is noetherian.
\end{theorem}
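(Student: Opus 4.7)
The plan is to prove this by contradiction, combining the three preceding lemmas. Recall that a poset being noetherian is equivalent to being a well-partial-order, i.e.\ having no infinite strictly descending chain and no infinite antichain. The absence of infinite strictly descending chains is immediate from Lemma \ref{mapequiv}: any relation $S_{d,n} \leq_{RL} T_{e,m}$ requires an order-respecting embedding of the tensor positions $1,\dots,n$ of $S$ into the $m$ positions of $T$, together with an increasing map on each underlying index set, so in particular it forces $e \geq d$ and $m \geq n$. Since $d$ and $n$ are non-negative integers, there can be no infinite strictly descending chain in $(\RL,\leq_{RL})$.

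So the real content is to rule out infinite antichains. Suppose for contradiction that $S_{d_1,n_1}, S_{d_2,n_2}, \dots$ is an infinite antichain in $(\RL,\leq_{RL})$. Lemma \ref{ndbounded} then forces at least one of the sequences $(n_i)$ or $(d_i)$ to be bounded. However, Lemma \ref{nnotbounded} forbids $(n_i)$ from being bounded, and Lemma \ref{dnotbounded} forbids $(d_i)$ from being bounded. These three conclusions are jointly inconsistent, yielding the desired contradiction.

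The theorem is therefore a direct synthesis of the three lemmas and requires no new constructions or estimates. There is no hard step remaining: the substantive work was already carried out in Lemma \ref{ndbounded}, where a pigeonhole argument with carefully balanced binomial comparisons forced boundedness of $n$ or $d$ in any putative antichain, and in Lemma \ref{nnotbounded}, where encoding each RL as a suitably labeled tree and applying Kruskal's tree theorem eliminated the bounded-$n$ scenario. Lemma \ref{dnotbounded} then dispatched the bounded-$d$ scenario via Higman's lemma applied to the finite alphabet of basis vectors of $\bigwedge^d \bk^{Md}$. The current theorem simply observes that these three results together close off every possibility.
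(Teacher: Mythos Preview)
Your proof is correct and follows essentially the same route as the paper: assume an infinite antichain exists, invoke Lemma~\ref{ndbounded} to force boundedness of one of the two parameters, and then contradict Lemmas~\ref{nnotbounded} and~\ref{dnotbounded}. The only addition is that you explicitly dispatch the descending-chain condition, which the paper's proof omits (presumably as obvious, or because only the antichain condition is used downstream in Theorem~\ref{monfingen}); your observation that $S_{d,n}\leq_{RL}T_{e,m}$ forces $n\leq m$ and $d\leq e$ handles this, once one also notes that for equal bidegree the order is discrete.
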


\begin{proof}
Suppose there were an infinite anti-chain of RLs. From Lemma \ref{ndbounded} we know that either the $n$ or $d$ that appear in this list must be bounded otherwise it could not be an antichain. However this cannot be the case via Lemmas \ref{dnotbounded} and \ref{nnotbounded}. So no such antichain exists.
\end{proof}

An immediate consequence of this theorem is the following,

\begin{theorem} \label{monfingen}
All monomial ideals of $\cP$ are finitely generated.
\end{theorem}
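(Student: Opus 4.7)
The plan is to deduce Theorem \ref{monfingen} as a direct consequence of Theorem \ref{RLNoeth} via a standard Gr\"obner-style argument, since the nontrivial combinatorics (Lemmas \ref{ndbounded}, \ref{nnotbounded}, \ref{dnotbounded}) have already been absorbed into the noetherianity of the poset $(\RL, \leq_{RL})$.

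First I would set up the dictionary between monomials in $\cP$ and reading lists. Given any monomial ideal $J \subseteq \cP$, let $\cS_J \subseteq \RL$ denote the set of reading lists of monomials lying in $J$. I claim $\cS_J$ is upward-closed under $\leq_{RL}$: if $S_{d,n} \in \cS_J$ and $S_{d,n} \leq_{RL} T_{e,m}$, then by Lemma \ref{mapequiv} this comparison is equivalent to the monomial order, so the monomial corresponding to $T_{e,m}$ lies in the ideal generated by the monomial corresponding to $S_{d,n}$, hence in $J$.

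Next, I would extract a finite generating set using Theorem \ref{RLNoeth}. Since $(\RL, \leq_{RL})$ is noetherian (no infinite antichain), the subset $\cS_J$ has only finitely many minimal elements under $\leq_{RL}$; call them $s_1, \ldots, s_k$, and let $m_1, \ldots, m_k \in \cP$ be the corresponding monomials. Every RL $T \in \cS_J$ satisfies $s_i \leq_{RL} T$ for some $i$ by minimality, and hence by Lemma \ref{mapequiv} the monomial of $T$ lies in the ideal generated by $m_i$. Therefore $J = (m_1, \ldots, m_k)$ as an ideal of $\cP$.

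There isn't really a main obstacle here beyond being careful about the direction of the order: one must remember that $\leq_{RL}$ encodes divisibility (larger RLs correspond to monomials in the ideal generated by smaller ones), so upward-closed sets correspond to monomial ideals, and minimal elements correspond to generators. Once this is aligned, the proof is a two-line consequence of Theorem \ref{RLNoeth} together with Corollary \ref{antichain} and Lemma \ref{mapequiv}.
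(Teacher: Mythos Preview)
Your proof is correct and follows essentially the same approach as the paper's: both deduce the result directly from Theorem~\ref{RLNoeth} via the dictionary between monomials and reading lists established in Lemma~\ref{mapequiv} and Corollary~\ref{antichain}. The only difference is cosmetic---you phrase it constructively (an upward-closed set in a noetherian poset has finitely many minimal elements, which then generate), whereas the paper argues by contradiction (a non-finitely-generated monomial ideal would produce an infinite antichain of RLs).
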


\begin{proof}
From Corollary \ref{antichain} if there is an infinite anti-chain of monomials, this leads to an infinite anti-chain of RLs under the order described in Lemma \ref{mapequiv}. This contradicts Theorem \ref{RLNoeth}. Hence all monomial ideals of $\cP$ are finitely generated.
%
\end{proof}

Now place a total ordering $\preceq$ on monomials (ignoring coefficients) of the same bidegree $(d,n)$ as follows. Encode each monomial by $(\bZ^d)^n$ via the $d^{th}$ index in the $n^{th}$ tensor position. First, define $\preceq$ on $\bZ^{d}$ using lexicographic ordering, i.e. $(a_1,\dots,a_d) \preceq (b_1,\dots,b_d)$ if the first nonzero element of $(b_1 - a_1,\dots,b_d-a_d)$ is positive. Then compare tensors using lexicographic ordering. We only work with bihomogeneous elements, so it is not necessary to find a way to compare elements of different bidegrees. 

\begin{lemma} \label{order}
Let $m,m',n$ be monomials. For any $g \in {\rm Inc}(\bN)$ and $\sigma$ a splitting,
\begin{enumerate}
\item If $m \preceq m'$, then $m \ast_g n \preceq m' \ast_g n$.
\item For any $n$, if $m \preceq m'$ then $n \cdot_\sigma m \preceq n \cdot_\sigma m$.
\end{enumerate}
\end{lemma}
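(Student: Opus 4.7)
The plan is to reduce each claim to the case where $m$ and $m'$ differ minimally, and then track precisely how each product affects the reading list encoding at the first disagreeing tensor position.

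For part (2), the shuffle product is the easy half. Write $m$ and $m'$ with reading lists $(S^1,\dots,S^q)$ and $((S')^1,\dots,(S')^q)$, and let $i$ be the first tensor position at which they differ, so $S^i \prec (S')^i$ in the lex order on $\bZ^d$. Since $\cdot_\sigma$ merely embeds the tensor factors of $m$ (respectively $m'$) into the positions $j_1<\cdots<j_q$ prescribed by $\sigma$, the resulting monomials $n\cdot_\sigma m$ and $n\cdot_\sigma m'$ agree on every tensor position $<j_i$: the positions belonging to $n$ are identical in both, and the positions $j_k$ with $k<i$ are identical since $S^k=(S')^k$. At position $j_i$ the entries are $S^i$ versus $(S')^i$, so the lex comparison at the outer tensor level inherits the strict inequality $S^i\prec (S')^i$.

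For part (1) the analysis is analogous but requires a merge argument. Again reduce to the case where the reading lists of $m$ and $m'$ first differ at tensor position $i$, with wedge-tuples $A=(a_1,\dots,a_d)$ and $A'=(b_1,\dots,b_d)$, matching on the first $r-1$ coordinates and with $a_r<b_r$. Fix $a\in\cP_{e,k}$ with tensor entries $T^1,\dots,T^k$, and let $B=g^c(T^i)$. The tensor positions of $m\ast_g a$ and $m'\ast_g a$ below $i$ are identical, since at position $j<i$ the sorted merge of $g(S^j)$ with $g^c(T^j)$ is the same for both. At position $i$ we compare the sorted merge of $g(A)$ with $B$ against that of $g(A')$ with $B$.

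Walking the merge from left to right, the two sequences coincide through $g(a_1),\dots,g(a_{r-1})$ together with whatever entries of $B$ fall between them (these prefixes depend only on $g(a_1),\dots,g(a_{r-1})$ and $B$). At the next step, the $A$-merge places $g(a_r)$, while the $A'$-merge places either $g(b_r)$ or some $\beta\in B$ with $\beta$ lying between $g(a_{r-1})$ and $g(b_r)$. Crucially, $g$ is strictly increasing and $\im(g)\cap\im(g^c)=\varnothing$, so no $\beta\in B$ equals $g(a_r)$; and $g(a_r)<g(b_r)$. Every such $\beta$ in the $A'$-merge must therefore satisfy $\beta>g(a_r)$, because any $\beta<g(a_r)$ would already have been placed in the common prefix. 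Hence the $A'$-merge's next entry strictly exceeds $g(a_r)$, and the lex comparison at position $i$ is settled in favor of $m\ast_g a$. The main obstacle is exactly this merge step: one must verify that the disjointness of $\im(g)$ and $\im(g^c)$ prevents an accidental tie at the critical coordinate, and that any $B$-entries that appear in the gap $(g(a_r),g(b_r))$ do not reverse the comparison—both of which follow directly from the strict inequality $g(a_r)<g(b_r)$.
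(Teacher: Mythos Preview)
Your proof is correct and follows the same route as the paper's, only with the details written out in full: the paper's argument is the two-line observation that $\cdot_\sigma$ leaves indices unchanged and that $g\in{\rm Inc}(\bN)$ forces the merge at the first differing tensor slot to preserve the lex comparison, and your merge analysis is exactly the unpacking of that second clause. One small imprecision---your phrase ``entries of $B$ fall between them'' understates the common prefix, which also contains the $B$-entries between $g(a_{r-1})$ and $g(a_r)$---but you implicitly correct this when you note that any $\beta<g(a_r)$ is already in the common prefix, so the argument goes through.
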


\begin{proof}
The proof of the first claim follows because $g \in {\rm Inc}(\bN)$. The second follows because we never change any indices.
\end{proof}

Given $f \in \cP_{d,n}$ we define ${\rm init}(f)$ as the largest monomial along with its coefficient with respect to $\preceq$ that has a nonzero coefficient in $f$. Given an ideal $I$, let ${\rm init}(I)$ be the $\bk$-span of $\{{\rm init}(f) \; | \; f \in I \; \text{homogeneous} \}$.

%

%
%

\begin{lemma} \label{monideal}
If $I$ is an ideal, then ${\rm init}(I)$ is a monomial ideal.
\end{lemma}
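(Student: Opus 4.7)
The plan is to follow the standard Gröbner pattern: $\init(I)$ is, by its very definition, the $\bk$-linear span of certain monomials, so the only content of the statement is that $\init(I)$ is closed under the two multiplications. Since $\init(I)$ is a subspace and both $\ast_g$ and $\cdot_\sigma$ are bilinear, it suffices to verify closure when the second factor is a single monomial. In other words, I must show that for every homogeneous $f \in I$ and every monomial $h \in \cP$, both $\init(f) \ast_g h$ and $h \cdot_\sigma \init(f)$ lie in $\init(I)$.

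The main computation is to identify $\init(f) \ast_g h$ (up to a nonzero scalar) with $\init(f \ast_g h)$, which then lies in $\init(I)$ automatically since $f \ast_g h \in I$. Writing $f = c_0 \init(f) + \sum_{i \geq 1} c_i m_i$ with $m_i \prec \init(f)$ in the same bidegree, I distribute to get $f \ast_g h = c_0 (\init(f) \ast_g h) + \sum_{i \geq 1} c_i (m_i \ast_g h)$. Lemma \ref{order}(1) then says $\init(f) \ast_g h \succ m_i \ast_g h$ whenever the former is nonzero. Moreover no two of the monomial products $m_i \ast_g h$ collide, since the index data of $m_i \ast_g h$ together with the knowledge of $g$ recovers $m_i$; hence the sum has no cancellation and its leading term is exactly $c_0 (\init(f) \ast_g h)$. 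The argument for $h \cdot_\sigma \init(f)$ is verbatim the same but using Lemma \ref{order}(2).

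The only subtlety, and the sole point worth being careful about, is the possibility that $\init(f) \ast_g h$ vanishes. This can only happen when $g$ fails the condition $g([Md]) \subseteq [M(d+e)]$, but in that case every product $m_i \ast_g h$ (and symmetrically $f \ast_g h$ itself) vanishes by the definition of $\ast_g$, so there is nothing to check; the element $0$ trivially belongs to $\init(I)$. Once this degenerate case is dispatched, the leading-term identification above places the relevant monomial in $\init(I)$ and completes the proof.
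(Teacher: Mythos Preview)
Your proof is correct and follows essentially the same approach as the paper: both arguments reduce to showing $\init(f)\ast_g h=\init(f\ast_g h)$ and $h\cdot_\sigma\init(f)=\init(h\cdot_\sigma f)$ via Lemma~\ref{order}, and then use $f\ast_g h,\ h\cdot_\sigma f\in I$. Your write-up is in fact more careful than the paper's: you make explicit the injectivity of $m\mapsto m\ast_g h$ (needed to upgrade the weak inequality in Lemma~\ref{order} to a strict one) and you handle the degenerate case where the product vanishes, both of which the paper leaves implicit. One minor notational slip: in this paper $\init(f)$ already carries its coefficient, so your decomposition should read $f=\init(f)+\sum_{i\ge1}c_im_i$ rather than $f=c_0\init(f)+\cdots$; this does not affect the argument.
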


\begin{proof}
If $m \in {\rm init}(I)$, we have $m = {\rm init}(f)$ for $f \in I$. For any monomial $n \in \cP$, we have $n \cdot_\sigma m =  {\rm init}(n \cdot_\sigma f)$ by Lemma \ref{order} part (2) 
%
Furthermore, we claim that $m \ast_g n = {\rm init}(f \ast_g n)$. Using the bilinearity of $\ast_g$, the result follows from Lemma \ref{order} part (1).

So $m \ast_g n, n \cdot_\sigma m \in {\rm init}(I)$.  To generalize to any $h \in \cP$, because $\ast_g$ is bilinear it suffices to work with monomials and the above implies the desired result. Hence, ${\rm init}(I)$ is a monomial ideal.
\end{proof}

\begin{lemma} \label{gen}
If $I \subset J$ are ideals and $\init(I) = \init(J)$, then $I = J$.  In particular, if $f_1,f_2,\dots \in J$ and $\init(f_1),\init(f_2),\dots$ generate $\init(J)$ then $f_1,f_2,\dots$ generate $J$.
\end{lemma}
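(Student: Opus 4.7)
The plan is the standard Gr\"obner-basis termination argument, adapted to the bigraded setting of $\cP$. Because every ideal of $\cP$ is a bihomogeneous subspace and $\init$ respects bidegree, it suffices to prove $I_{d,n} = J_{d,n}$ for each bidegree $(d,n)$. The crucial feature I exploit is that $(\bigwedge^d \bk^{Md})^{\otimes n}$ is finite-dimensional, so there are only finitely many monomials of fixed bidegree and $\preceq$ well-orders them.

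Suppose for contradiction that $I_{d,n} \subsetneq J_{d,n}$ for some bidegree $(d,n)$, and choose $f \in J_{d,n} \setminus I_{d,n}$ whose leading monomial is $\preceq$-minimal among such choices (this minimum exists by finiteness). Write $\init(f) = \alpha m$ with $\alpha \in \bk^\times$. Since $\init(f) \in \init(J) = \init(I)$ and $\init(I)$ is by definition the $\bk$-span of leading terms of bihomogeneous elements of $I$, $\bk$-linear independence of the underlying monomials lets me isolate the coefficient of $m$ in bidegree $(d,n)$ and produce an element $g \in I_{d,n}$ with $\init(g) = \beta m$ for some $\beta \in \bk^\times$. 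The difference $h = f - (\alpha/\beta)\,g$ lies in $J_{d,n}$ because $I \subset J$, and it lies outside $I_{d,n}$ because $f \notin I$ while $(\alpha/\beta)\,g \in I$. On the other hand, the $m$-coefficients of $f$ and $(\alpha/\beta)\,g$ cancel, so either $h = 0$ (forcing $f \in I$, a contradiction) or $\init(h) \prec \init(f)$, contradicting the minimal choice of $f$.

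For the second claim, let $J' \subset J$ denote the ideal generated by $f_1, f_2, \ldots$. Each $\init(f_i)$ lies in $\init(J')$, so the monomial ideal generated by $\{\init(f_i)\}_{i\ge 1}$, which equals $\init(J)$ by hypothesis, is contained in $\init(J')$; conversely $J' \subset J$ forces $\init(J') \subset \init(J)$. Hence $\init(J') = \init(J)$, and the first part of the lemma yields $J' = J$, as required.

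I do not expect a real obstacle in this proof. The only mildly delicate point is extracting $g \in I$ with leading monomial exactly $m$ and matching bidegree $(d,n)$, which is a routine unwinding of the definition of $\init(I)$ as a $\bk$-span combined with monomial linear independence; everything else is the familiar division/termination argument that makes Gr\"obner-style reductions work once the underlying order is a well-order on each bidegree.
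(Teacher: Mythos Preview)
Your proof is correct and follows essentially the same minimal-counterexample argument as the paper's proof; you simply fill in details the paper leaves implicit (restricting to a fixed bidegree, using finiteness to guarantee a $\preceq$-minimum, and isolating a $g \in I$ with the right leading monomial). The one caveat is that the division $\alpha/\beta$ assumes $\bk$ is a field; over a general commutative ring one should instead take a suitable $\bk$-linear combination of elements of $I$ with leading monomial $m$ to match the coefficient $\alpha$ exactly, but the paper is equally casual on this point and the field case is what matters for the applications.
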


\begin{proof}
Suppose $I$ does not equal $J$.  Pick $f \in J \setminus I$ where $\init(f)$ is minimal with respect to $\preceq$.  Then because $\init(I) = \init(J)$, we have $\init(f) = \init(f')$ for some $f' \in I$.  But $\init(f-f')$ is strictly smaller than $\init(f)$ and $f-f' \in J \setminus I$. This is a contradiction.

For the other statement, let $I$ be the ideal generated by $f_1,f_2,\dots$.
\end{proof}

\begin{corollary} \label{fingen}
Every ideal of $\cP$ is finitely generated.
\end{corollary}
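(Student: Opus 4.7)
The plan is to combine the three preceding results in the standard Gröbner fashion. The strategy is: take an arbitrary ideal $I \subset \cP$, pass to the monomial ideal $\init(I)$, use the fact that monomial ideals are finitely generated to extract finitely many elements of $I$ whose initial terms generate $\init(I)$, and then invoke Lemma \ref{gen} to conclude that these finitely many elements generate $I$ itself.

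In more detail, I would proceed as follows. First, given any ideal $I$ of $\cP$, apply Lemma \ref{monideal} to conclude that $\init(I)$ is a monomial ideal. Then apply Theorem \ref{monfingen} to conclude that $\init(I)$ is finitely generated as an ideal. By the definition of $\init(I)$ as the $\bk$-span of initial terms of homogeneous elements of $I$, any generating set of monomials for $\init(I)$ can be taken to consist of initial terms $\init(f_1), \ldots, \init(f_N)$ for some choice of homogeneous $f_1, \ldots, f_N \in I$. Finally, apply the second statement of Lemma \ref{gen} to conclude that $f_1, \ldots, f_N$ generate $I$.

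There is essentially no obstacle here beyond verifying the small bookkeeping: we need to confirm that a monomial generating set of $\init(I)$ can be chosen to consist of honest initial terms of elements of $I$ rather than arbitrary $\bk$-linear combinations thereof. This is immediate because $\init(I)$ is by definition spanned by monomials of the form $\init(f)$ with $f \in I$ homogeneous (the $\bk$-span of a set of monomials, each scaled by a nonzero coefficient, has as a $\bk$-basis a subset of those monomials), so any finite monomial generating set can be realized this way after rescaling. All the real work has already been done in Theorem \ref{RLNoeth}, Theorem \ref{monfingen}, and Lemma \ref{gen}; Corollary \ref{fingen} is the formal three-line consequence.
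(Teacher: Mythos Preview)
Your proposal is correct and matches the paper's own proof, which simply reads ``Combine Corollary \ref{monfingen}, Lemma \ref{monideal} and Lemma \ref{gen}.'' Your additional bookkeeping remark about realizing the monomial generators of $\init(I)$ as actual initial terms of elements of $I$ is a welcome clarification, but the argument is the same.
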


\begin{proof}
Combine Corollary \ref{monfingen}, Lemma \ref{monideal} and Lemma \ref{gen}.
\end{proof}

\section{Symmetrizing} \label{3}
What we are really interested in is $\bigoplus_{n,d \geq 0} \Sym^n(\bigwedge^d \bk^{Md})$ for fixed $M$. So we must symmetrize.  Assume $\bk$ is a field of characteristic $0$. Much of this section is translating the structure of \cite[\S3]{Sa1} to suit $\cP$. Define
\[
\cP^\Sigma_M = \bigoplus_{n,d \geq 0} \left( (\bigwedge^d \bk^{Md})^{\otimes n}\right)^{\Sigma_n} \qquad \qquad (\cP_\Sigma)_M = \bigoplus_{n,d \geq 0} \left((\bigwedge^d \bk^{Md})^{\otimes n}\right)_{\Sigma_n},
\]
where $\Sigma_n$ acts by permuting the tensor factors, and the superscript and subscript respectively denote taking invariants and coinvariants. We generally suppress the additional $M$ subscript and when it matters explicitly mention which $M$ we are working with, for ease of notation writing just $\cP^\Sigma$ or $\cP_\Sigma$. 

Our internal product $\ast_g$ respects the structure of $\cP^\Sigma$ and so $\cP^\Sigma$ is a subalgebra of $\cP$ with respect to it. Unfortunately the shuffle product does not respect the symmetric invariance and so $\cP^\Sigma$ is not closed under it. We remedy this by defining
\[
f \cdot g = \sum_\sigma f \cdot_\sigma g.
\]
Averaging over all splits produces symmetric elements and so $\cP^\Sigma$ is naturally closed under this new $\cdot$. Additionally, $\cdot$ is both commutative and associative. Both of these algebras are naturally bigraded by $(d,n)$. We denote these bigraded pieces by $\cP^\Sigma_{d,n}$ and $(\cP_{\Sigma})_{d,n}$. We will only consider bi-homogeneous subspaces of $\cP^\Sigma$ and $\cP_\Sigma$.

For each $d,n$ define a linear projection
\begin{align*}
\pi \colon \cP_{d,n} &\to \cP^\Sigma_{d,n}\\
w_1 \otimes \cdots \otimes w_n &\mapsto \frac{1}{n!} \sum_{\sigma \in \Sigma_n} w_{\sigma(1)} \otimes \cdots \otimes w_{\sigma(n)}.
\end{align*}
If $f \in \cP^\Sigma_{d,n}$, we have $\pi(f) = f$, so $\pi$ is surjective. We let $\pi' = n! \pi$. We also denote the direct sum of these maps by $\pi \colon \cP \to \cP^\Sigma$ and $\pi' \colon \cP \to \cP^\Sigma$. Next, define
\begin{align*}
\fG \colon (\cP_\Sigma)_{d,n} &\to \cP^\Sigma_{d,n}\\
w_1 \cdots w_n &\mapsto \tfrac{1}{n!} \sum_{\sigma \in \Sigma_n} w_{\sigma(1)} \otimes \cdots \otimes w_{\sigma(n)}.
\end{align*}
Then $\fG$ is a linear isomorphism, since $\fG$ is the inverse of the composition
\[
\cP^\Sigma_{d,n} \to \cP_{d,n} \to (\cP_\Sigma)_{d,n},
\]
where the first map is the natural injection and the second is the natural projection. Denote the direct sum of these maps by $\fG \colon \cP_\Sigma \to \cP^\Sigma$. 

\subsection{Properties of $\cP^\Sigma$} 

\begin{lemma} \label{projprop}
\begin{enumerate}
\item If $f \in \cP^\Sigma_{d,n}$ and $h \in \cP_{e,n}$ are homogeneous, then $\pi(f \ast_g h) = f \ast_g \pi(h)$ and $\pi(h \ast_g f) = \pi(h) \ast_g f$.
\item If $f \in \cP_{d,n}$ and $g \in \cP_{d,m}$, then $\binom{n+m}{n} \pi(f \cdot_\sigma g) = \pi(f) \cdot \pi(g)$ for any split of $[n+m]$.
\end{enumerate}
\end{lemma}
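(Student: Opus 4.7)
The plan is to prove each part by unwinding the definition of $\pi$ as an averaging operator and exploiting how each of the two products interacts with the $\Sigma_n$-action that permutes tensor positions.

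For part (1), the key observation is that $\ast_g$ is defined tensor-position-wise: looking at the formula for monomials, the $k$-th tensor factor of $f \ast_g h$ depends only on the $k$-th tensor factors of $f$ and $h$. Consequently $\ast_g$ commutes with the diagonal $\Sigma_n$-action, i.e.\ $\sigma(f \ast_g h) = \sigma(f) \ast_g \sigma(h)$ for any $\sigma \in \Sigma_n$. Since $f \in \cP^\Sigma_{d,n}$ is $\Sigma_n$-invariant, $\sigma(f) = f$, so by bilinearity of $\ast_g$,
\[
\pi(f \ast_g h) = \tfrac{1}{n!}\sum_{\sigma \in \Sigma_n}\sigma(f \ast_g h) = \tfrac{1}{n!}\sum_\sigma f \ast_g \sigma(h) = f \ast_g \pi(h).
\]
The other identity is symmetric.

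For part (2), I would first reduce to the standard split $\sigma_0 = (\{1,\dots,n\},\{n+1,\dots,n+m\})$, for which $f \cdot_{\sigma_0} g = f \otimes g$. Any two splits differ by an element of $\Sigma_{n+m}$, and $\pi$ is $\Sigma_{n+m}$-invariant, so $\pi(f \cdot_\sigma g)$ is independent of the choice of split $\sigma$. Expand
\[
\pi(f) \cdot \pi(g) = \sum_{\sigma'} \pi(f) \cdot_{\sigma'} \pi(g) = \tfrac{1}{n!\,m!}\sum_{\sigma',\,\alpha \in \Sigma_n,\,\beta \in \Sigma_m} (\alpha \cdot f) \cdot_{\sigma'} (\beta \cdot g),
\]
where the outer sum is over all $\binom{n+m}{n}$ splits of $[n+m]$.

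The crux is to establish a bijection
\[
\{\text{splits of }[n+m]\} \times \Sigma_n \times \Sigma_m \;\longleftrightarrow\; \Sigma_{n+m},
\]
sending a triple $(\sigma',\alpha,\beta)$ to the unique $\tau \in \Sigma_{n+m}$ that places $w_k$ (the $k$-th factor of $f$) at the position $i'_{\alpha(k)}$ and $v_k$ at position $j'_{\beta(k)}$, where $\sigma' = (\{i'_1 < \cdots < i'_n\},\{j'_1 < \cdots < j'_m\})$. Surjectivity follows by reading off from any $\tau$ the induced split (images of $\{1,\dots,n\}$ and $\{n+1,\dots,n+m\}$, each sorted) and the permutations that correct the order; injectivity is then immediate, and cardinalities agree since $\binom{n+m}{n} n!\,m! = (n+m)!$. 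Under this bijection, $(\alpha \cdot f) \cdot_{\sigma'} (\beta \cdot g) = \tau(f \otimes g)$, so
\[
\pi(f) \cdot \pi(g) = \tfrac{1}{n!\,m!} \sum_{\tau \in \Sigma_{n+m}} \tau(f \otimes g) = \tfrac{(n+m)!}{n!\,m!}\,\pi(f \otimes g) = \binom{n+m}{n}\pi(f \cdot_{\sigma} g).
\]

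The main obstacle is the bookkeeping for the bijection in part (2); conceptually this is just the coset decomposition of $\Sigma_{n+m}$ by the Young subgroup $\Sigma_n \times \Sigma_m$, but the correspondence between a coset representative and a (split, inner permutation) pair must be set up carefully so that the action on $f \otimes g$ matches $(\alpha \cdot f) \cdot_{\sigma'} (\beta \cdot g)$ on the nose. Once this combinatorial identification is in place, both parts collapse to routine manipulations of the defining averages.
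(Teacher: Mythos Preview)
Your proof is correct and essentially follows the paper's approach. For part (1), the paper writes out both sides explicitly as double sums over $\Sigma_n \times \Sigma_n$ and matches them via the change of variables $\sigma \mapsto \sigma\tau^{-1}$; your observation that $\ast_g$ is $\Sigma_n$-equivariant for the diagonal action is precisely what makes that change of variables work, so you have simply phrased the same computation more conceptually. For part (2), the paper gives no argument at all, deferring to \cite[Lemma~3.1]{Sa1}; your bijection between $\Sigma_{n+m}$ and $\{\text{splits}\}\times\Sigma_n\times\Sigma_m$ (the Young-subgroup coset decomposition) is exactly the standard argument one finds there, so you have reproduced the intended proof rather than found a different one.
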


\begin{proof}
\begin{enumerate}
\item Both $\pi(f \ast_g h)$ and $f \ast_g \pi(h)$ are bilinear in $h$ and $f$, so without loss we assume $h = h_1 \otimes \cdots \otimes h_n$ for $h_i \in \bigwedge^e \bk^{Me}$ and that $f = \sum_{\sigma \in \Sigma_n} f_{\sigma(1)} \otimes \cdots \otimes f_{\sigma(n)}$ for $f_i \in \bigwedge^d \bk^{Md}$. Then
\begin{align*}
f \ast_g \pi(h) &= \frac{1}{n!} \sum_{\sigma,\tau \in \Sigma_n} gf_{\sigma(1)} \wedge g^c h_{\tau(1)} \otimes \cdots \otimes g f_{\sigma(n)} \wedge g^c h_{\tau(n)}\\
\pi(f \ast_g h) &= \frac{1}{n!} \sum_{\sigma,\tau \in \Sigma_n} gf_{\sigma\tau(1)} \wedge g^c h_{\tau(1)} \otimes \cdots \otimes gf_{\sigma\tau(n)} \wedge g^c h_{\tau(n)}.
\end{align*}
These sums are identical. In the second, perform the change of variables $\sigma \mapsto \sigma\tau^{-1}$. This shows $\pi(f \ast_g h) = f \ast_g \pi(h)$.

We can apply similar reasoning to the second claim by considering
\begin{align*}
\pi(h) \ast_g f &= \frac{1}{n!} \sum_{\sigma,\tau \in \Sigma_n} g h_{\tau(1)} \wedge g^c f_{\sigma(1)} \otimes \cdots \otimes g h_{\tau(n)} \wedge g^cf_{\sigma(n)}\\
\pi(h \ast_g f) &= \frac{1}{n!} \sum_{\sigma,\tau \in \Sigma_n} g h_{\tau(1)} \wedge g^cf_{\sigma\tau(1)} \otimes \cdots \otimes gh_{\tau(n)} \otimes g^cf_{\sigma\tau(n)}.
\end{align*}

\item This is the same proof as in \cite[Lemma 3.1]{Sa1} because it is the same product.
\end{enumerate}
\end{proof}

\begin{definition}
A homogeneous subspace $I \subseteq \cP^\Sigma$ is an {\bf ideal} if $f \in I$ implies that $f \cdot h \in I$ and $f \ast_g h \in I$ for all $h \in \cP^\sigma$ and $g \in {\rm Inc}(\bN)$.
\end{definition}

We now pass the noetherianity of $\cP$ to $\cP^\Sigma$.

\begin{proposition} \label{invfingen}
Every ideal of $\cP^\Sigma$ is finitely generated.
\end{proposition}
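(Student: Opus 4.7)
The plan is to pull noetherianity back from the full algebra $\cP$ to its invariants $\cP^\Sigma$ via the averaging projection $\pi$. Given an ideal $I \subseteq \cP^\Sigma$, I would first consider the ideal $\tilde{I}$ of $\cP$ generated by $I$ (equivalently, generated by any $\bk$-spanning set for $I$). Corollary~\ref{fingen} guarantees that $\tilde{I}$ is finitely generated, and Corollary~\ref{basis} lets me expand any generating set as a finite sum of terms $a \cdot_\sigma (f \ast_g b)$ with $f \in I$; harvesting the finitely many $f$'s that actually appear yields a finite subset $f_1,\dots,f_M \in I$ whose $\cP$-ideal already equals $\tilde{I}$.

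The core claim is then that these same $f_i$ also generate $I$ as a $\cP^\Sigma$-ideal. Given $f \in I \subseteq \tilde{I}$, Corollary~\ref{basis} produces an expression
\[
f = \sum_j a_j \cdot_{\sigma_j} (f_{i_j} \ast_{g_j} b_j), \qquad a_j, b_j \in \cP.
\]
Since $f$ is invariant, $\pi(f) = f$, so I apply $\pi$ term by term and rewrite each summand using Lemma~\ref{projprop}. Part~(1), together with the invariance of $f_{i_j}$, gives $\pi(f_{i_j} \ast_{g_j} b_j) = f_{i_j} \ast_{g_j} \pi(b_j)$; part~(2) converts the shuffle $\cdot_{\sigma_j}$ into a nonzero rational multiple of the symmetrized product $\cdot$. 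Together these rewrites present $f$ in the form
\[
f = \sum_j c_j \, \pi(a_j) \cdot \bigl( f_{i_j} \ast_{g_j} \pi(b_j) \bigr),
\]
which is manifestly inside the $\cP^\Sigma$-ideal generated by $f_1,\dots,f_M$, because $\pi(a_j),\pi(b_j) \in \cP^\Sigma$ and $\cP^\Sigma$ is closed under $\ast_g$.

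The only place the characteristic hypothesis enters is in dividing by the multinomial scalars coming from Lemma~\ref{projprop}(2), so the argument is essentially a formal transfer from Corollary~\ref{fingen}. The mildly delicate step to double-check is that the invariance of $f_{i_j}$ really does allow part~(1) of Lemma~\ref{projprop} to pull $\pi$ through $\ast_{g_j}$ on the inner factor; this is legitimate precisely because we arranged the generators $f_i$ to lie in $I \subseteq \cP^\Sigma$ in the first harvesting step. Hence no substantive obstacle arises, and the proof reduces to a careful bookkeeping of the two identities in Lemma~\ref{projprop}.
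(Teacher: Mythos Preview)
Your proposal is correct and follows essentially the same route as the paper: push the ideal up to $\cP$, invoke Corollary~\ref{fingen} to extract finitely many generators from the original $\cP^\Sigma$-ideal, then use Corollary~\ref{basis} together with both parts of Lemma~\ref{projprop} to pull the resulting expressions back into $\cP^\Sigma$. The only cosmetic difference is that the paper selects the finite generators directly from $J$ by the standard noetherian argument rather than by your ``harvesting'' via Corollary~\ref{basis}, but the two justifications are interchangeable.
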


\begin{proof}
Let $J$ be an ideal of $\cP^\Sigma$. As $\cP^\Sigma$ naturally lies inside $\cP$, we can consider the ideal in $\cP$ generated by $J$, call it $I$. By Corollary \ref{fingen}, $I$ is finitely generated, say by $f_1,\dots,f_N$. As $J$ generates $I$, we may assume that each of the $f_i$ belong to $J$. We now claim that the $f_i$ also generate $J$ as an ideal in $\cP^\Sigma$. By Corollary \ref{basis}, every $f \in J$ can be written as a sum of terms of the form $h \cdot_\sigma (f_i \ast_g a)$ where $h,a \in \cP$, $g \in {\rm Inc}(\bN)$. By Lemma \ref{projprop} we have
\[
\pi(h \cdot_\sigma (f_i \ast_g a)) = \binom{n+m}{n}^{-1} \pi(h) \cdot (f \ast_g \pi(a)),
\]
here $h \in \cP_{d,n}$ and $f \ast_g a \in \cP_{d,m}$. By the surjectivity of $\pi$, we conclude that every element of $J$ can be written as a sum of terms of the form $h' \cdot (f_i \ast_g a')$ where $h',a' \in \cP^\Sigma$ and $g \in {\rm Inc}(\bN)$.
\end{proof}

For fixed $d$, $\bigoplus_n \cP_{d,n}^\Sigma$ is a free divided power algebra under $\cdot$, and hence is freely generated in degree $n = 1$. So we can define a comultiplication $\Delta \colon \cP^\Sigma \to \cP^\Sigma \otimes \cP^\Sigma$ by $w \mapsto 1 \otimes w + w \otimes 1$ when $w \in \cP^\Sigma_{d,1}$ and requiring that it is an algebra homomorphism for $\cdot$.

We ultimately wish to show that the two products $\cdot$ and $\ast_g$ respect this comultiplication structure. To do this, we first extend the products to $\cP^\Sigma \otimes \cP^\Sigma$ componentwise.

\begin{lemma} \label{comultproduct}
Pick $\bx \in \cP_{d,n}^\Sigma$ and $\by \in \cP_{e,m}^\Sigma$, and $\bv \in \cP^\Sigma_{e,n}$. Then
\begin{align*}
\Delta(\by \cdot \bv) &= \Delta(\by) \cdot \Delta(\bv)\\
\Delta(\bx \ast_g \bv) &= \Delta(\bx) \ast_g \Delta(\bv)
\end{align*}
\end{lemma}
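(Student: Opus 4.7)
The first identity is essentially immediate from the definition. The coproduct $\Delta$ is constructed as the unique $\cdot$-algebra homomorphism determined by $w \mapsto 1 \otimes w + w \otimes 1$ on $\cP^\Sigma_{d,1}$, and $\cP^\Sigma \otimes \cP^\Sigma$ carries the componentwise $\cdot$-product; so $\Delta(\by \cdot \bv) = \Delta(\by) \cdot \Delta(\bv)$ holds by construction. Characteristic zero is used here to ensure that each $\bigoplus_n \cP^\Sigma_{e,n}$ is a free divided-power (hence polynomial) algebra on $\cP^\Sigma_{e,1}$, making the extension well-defined; concretely, the isomorphism $\fG$ of \S\ref{3} identifies $\cP^\Sigma$ with the polynomial algebra $\cP_\Sigma$ on which this comultiplication is manifest.

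For the second identity the plan is: (a) reduce via bilinearity and polarization to a single explicit calculation, (b) compute both sides in closed form, (c) match coefficients. By bilinearity of $\ast_g$ and linearity of $\Delta$, it suffices to verify the identity on any spanning set of $\cP^\Sigma_{d,n} \times \cP^\Sigma_{e,n}$. Since $\cP^\Sigma_{d,n} \cong \Sym^n(\cP^\Sigma_{d,1})$ in characteristic zero, polarization of symmetric tensors shows that $\{w^{\cdot n} : w \in \cP^\Sigma_{d,1}\}$ spans $\cP^\Sigma_{d,n}$, so we may assume $\bx = w^{\cdot n}$ and $\bv = u^{\cdot n}$ for some $w \in \cP^\Sigma_{d,1}$ and $u \in \cP^\Sigma_{e,1}$.

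Set $y := w \ast_g u = g(w) \wedge g^c(u) \in \cP^\Sigma_{d+e,1}$. A straightforward induction on $n$ gives $w^{\cdot n} = n!\, w^{\otimes n}$ (each of the $n!$ splits contributes the same tensor), and since $\ast_g$ acts tensor-slot by tensor-slot on all-equal tensors, $\bx \ast_g \bv = n!\, y^{\cdot n}$. Applying $\Delta$ and using the first identity,
\begin{equation*}
\Delta(\bx \ast_g \bv) \;=\; n!\,(1 \otimes y + y \otimes 1)^{\cdot n} \;=\; n! \sum_{k=0}^n \binom{n}{k}\, y^{\cdot k} \otimes y^{\cdot (n-k)}.
\end{equation*}
On the other side, the first identity expands $\Delta(\bx) = \sum_k \binom{n}{k}\, w^{\cdot k} \otimes w^{\cdot(n-k)}$ and similarly for $\Delta(\bv)$. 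When we apply componentwise $\ast_g$, the product vanishes between pieces of different tensor index, so only the diagonal $k = k'$ terms survive, leaving
\begin{equation*}
\Delta(\bx) \ast_g \Delta(\bv) \;=\; \sum_k \binom{n}{k}^2 k!\,(n-k)!\; y^{\cdot k} \otimes y^{\cdot (n-k)},
\end{equation*}
and the elementary identity $\binom{n}{k}^2 k!(n-k)! = n!\binom{n}{k}$ shows this equals the previous expression.

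The main obstacle is mostly bookkeeping: tracking the divided-power factorials cleanly and carefully justifying the polarization reduction. Conceptually, the crucial observation is that the matching-$n$ requirement built into $\ast_g$ is exactly what selects the diagonal $k=k'$ terms in the double sum, so that the multiplicativity of $\Delta$ with respect to $\ast_g$ is forced by the same combinatorial constraint that makes $\ast_g$ well-defined on bigraded pieces.
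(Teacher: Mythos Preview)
Your argument is correct, and it takes a genuinely different route from the paper's proof. The paper reduces only to the case where $\bx$ and $\bv$ are symmetrizations of \emph{arbitrary} simple tensors, $\bx=\pi'(x_1\otimes\cdots\otimes x_n)$ and $\bv=\pi'(v_1\otimes\cdots\otimes v_n)$; it then expands $\Delta(\bx)$ and $\Delta(\bv)$ as sums over subsets $S,T\subseteq[n]$, computes $\Delta(\bx\ast_g\bv)$ as a double sum over $\sigma\in\Sigma_n$ and $S\subseteq[n]$, and finally defers the term-by-term matching of these two double sums to \cite[Lemma~3.4]{Sa1}. Your use of polarization pushes the reduction one step further, to the case $x_1=\cdots=x_n=w$ and $v_1=\cdots=v_n=u$, which collapses all the subset and permutation combinatorics into the single identity $\binom{n}{k}^2 k!\,(n-k)! = n!\binom{n}{k}$. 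The trade-off: the paper's approach avoids invoking polarization (so the individual computation is valid over any ring, even though the ambient setup already assumes characteristic~$0$), but pays for this with a harder matching step that is outsourced to another paper; your approach is self-contained and computationally lighter, at the cost of relying explicitly on the characteristic-$0$ polarization identity. Since characteristic~$0$ is already assumed throughout \S\ref{3}, your simplification is a genuine improvement here.
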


\begin{proof}
The first identity follows from the definition of $\Delta$.

For the second identity, we once again notice $\Delta(\bx \ast_g \bv)$ and $\Delta(\bx) \ast_g \Delta(\bv)$ are both bilinear in $\bx$ and $\bv$. As a result, we may assume
\[
\bx = \sum_{\sigma \in \Sigma_n} x_{\sigma(1)} \otimes \cdots \otimes x_{\sigma(n)},
\]
for some $x_1 \otimes \cdots \otimes x_n \in \cP_{d,n}$ and that
\[
\bv = \sum_{\sigma \in \Sigma_n} v_{\sigma(1)} \otimes \cdots \otimes v_{\sigma(n)},
\]
for some $v_1 \otimes \cdots \otimes v_n \in \cP_{e,n}$. Then by definition $\bv = v_1 \cdots v_n$ and $\bx = x_1 \cdots x_n$, here we are using the $\cdot$ product. We defined $\Delta$ so that it is an algebra homomorphism for $\cdot$, so we have
\[
\Delta(\bv) = \Delta(v_1) \cdots \Delta(v_n) = \sum_{S \subseteq [n]} \pi'(v_S) \otimes \pi'(v_{[n]\setminus S})
\]
\[
\Delta(\bx) = \Delta(x_1) \cdots \Delta(x_n) = \sum_{S \subseteq [n]} \pi'(x_S) \otimes \pi'(x_{[n]\setminus S})
\]
where the sum is over all subsets $S = \{s_1 < \cdots < s_j\}$ of $[n]$ and $v_S = v_{s_1} \otimes \cdots \otimes v_{s_j}$ and $x_s = x_{s_1} \otimes \cdots \otimes x_{s_j}$. This gives
\begin{align} \label{eq1}
\Delta(\bx) \ast_g \Delta(\bv) &= \left(\sum_{T \subseteq [n]} \pi'(x_T) \otimes \pi'(x_{[n]\setminus T}) \right) \ast_g \left( \sum_{S \subseteq [n]} \pi'(v_S) \otimes \pi'(v_{[n] \setminus S}) \right) \nonumber \\
&= \sum_{S,T \subseteq [n]} \pi'(x_T \ast_g \pi'(v_S)) \otimes \pi'(x_{[n]\setminus T} \ast_g \pi'(v_{[n] \setminus S})).
\end{align}
Where in the second equality we use Lemma \ref{projprop}(1) to write $\pi'(x_T) \ast_g \pi'(v_S) = \pi'(x_T \ast_g \pi'(v_S))$.  On the other hand,
\begin{align*}
\bx \ast_g \bv &= \sum_{\sigma,\tau \in \Sigma_n} gx_{\sigma(1)} \wedge g^c v_{\tau(1)} \otimes \cdots \otimes gx_{\sigma(n)} \wedge g^c v_{\tau(n)}\\
&= \sum_{\sigma \in \Sigma_n} (gx_{\sigma(1)} \wedge g^cv_1) \cdots (gx_\sigma(n) \wedge g^c v_n),
\end{align*}
where in the second sum we are using the $\cdot$ product. In particular,
\begin{equation} \label{eq2}
\Delta(\bx \ast_g \bv) = \sum_{\sigma \in \Sigma_n} \sum_{S \subseteq [n]} \pi'(gx \wedge g^cv)_{\sigma,S} \otimes \pi'(gx \wedge g^cv)_{\sigma,[n]\setminus S}
\end{equation}
Where $(gx \wedge g^c v)_{\sigma,S} = gx_{\sigma(s_1)} \wedge g^c v_{s_1} \otimes \cdots gx_{\sigma(s_j)} \wedge g^c v_{s_j}$ if $S = \{s_1 < \cdots < s_j\}$. We also write $[n] \setminus S = \{s_{j+1} < \cdots < s_n\}$.

The same proof as in \cite[Lemma 3.4]{Sa1} shows that the expressions \eqref{eq1} and \eqref{eq2} for $\Delta(\bx) \ast_g \Delta(\bv)$ and $\Delta(\bx \ast_g \bv)$ are equal. 
\end{proof}

We can now present a symmetrized version of Lemma \ref{associative}:

\begin{lemma}
Given $f \in \cP_{d,n}^\Sigma$, $b \in \cP_{d,m}^\Sigma$, and $a \in \cP_{e,n+m}^\Sigma$, we have
\[
(f \cdot b) \ast_g a = (f \otimes b) \ast_g \Delta(a)
\]
\end{lemma}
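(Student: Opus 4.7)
The approach is to reduce the identity to the unsymmetrized associativity Lemma \ref{associative} by expanding both sides and matching terms. By bilinearity of $\cdot$, $\ast_g$, and $\Delta$, it suffices to verify the identity on orbit-sum representatives, i.e.\ images of $\pi'$ applied to tensor monomials.

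For the left-hand side, I would write $f \cdot b = \sum_\sigma f \cdot_\sigma b$, where $\sigma$ ranges over all splits of $[n+m]$ into a size-$n$ subset and its complement. Lemma \ref{associative} then gives
\[
(f \cdot b) \ast_g a \;=\; \sum_\sigma (f \cdot_\sigma b) \ast_g a \;=\; \sum_\sigma \sum_i h_i^\sigma \cdot_\sigma (f \ast_g p_i^\sigma),
\]
where, following the proof of Lemma \ref{associative}, if $\sigma = (\{i_1<\cdots<i_m\},\{j_1<\cdots<j_n\})$, then $p_i^\sigma$ is built from the tensor positions of $a$ indexed by $\{j_1,\ldots,j_n\}$ and $h_i^\sigma$ from those indexed by $\{i_1,\ldots,i_m\}$ combined with $b$.

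For the right-hand side, I would expand $\Delta(a)$ using the same calculation as in the proof of Lemma \ref{comultproduct}. Since $f \otimes b \in \cP^\Sigma_{d,n} \otimes \cP^\Sigma_{d,m}$, the only bidegree component of $\Delta(a)$ that contributes is
\[
\sum_{\substack{S\subseteq[n+m]\\|S|=n}} \pi'(a_S) \otimes \pi'(a_{[n+m]\setminus S}),
\]
where $a_S$ picks out the tensor positions of $a$ indexed by $S$. Applying $\ast_g$ componentwise to $(f \otimes b)$ and then reassembling via $\cdot$ produces a sum indexed by the size-$n$ subsets $S$ of $[n+m]$.

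The plan concludes by matching the two expansions. Splits $\sigma$ of $[n+m]$ are in natural bijection with size-$n$ subsets $S = \{j_1,\ldots,j_n\}$ of $[n+m]$, and under this bijection the terms $h_i^\sigma \cdot_\sigma (f \ast_g p_i^\sigma)$ collected over all $\sigma$ reassemble, via Lemma \ref{projprop}, into the terms $(f \ast_g \pi'(a_S)) \cdot (b \ast_g \pi'(a_{[n+m]\setminus S}))$ on the right. The main obstacle is the combinatorial bookkeeping: tracking how the normalization constants from $\pi'$ and Lemma \ref{projprop}(2) on the right interact with the sum over splits on the left, and verifying that no spurious scalars remain. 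The matching is entirely parallel to the Veronese analogue in \cite[Lemma 3.5]{Sa1}, since our $\ast_g$ and $\cdot$ satisfy exactly the same symmetrization compatibilities needed to make the terms line up.
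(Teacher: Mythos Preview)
Your approach is correct and essentially the same as the paper's: both reduce to the unsymmetrized associativity computation of Lemma \ref{associative} and then match the sum over splits with the expansion of $\Delta(a)$. The only difference is cosmetic: the paper writes $a$ and $b$ directly as orbit sums $\sum_\rho \alpha_{\rho(1)}\otimes\cdots\otimes\alpha_{\rho(n+m)}$ and $\sum_\tau \beta_{\tau(1)}\otimes\cdots\otimes\beta_{\tau(m)}$ and computes $(f\cdot_\sigma b^\tau)\ast_g a^\rho$ explicitly, summing over $\rho,\tau,\sigma$ to get the right-hand side directly---so it never needs to invoke Lemma \ref{projprop} or track any $\pi'$ normalizations. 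Your route through Lemma \ref{projprop} works too, but the bookkeeping you flag as ``the main obstacle'' simply disappears if you expand the orbit sums from the start as the paper does.
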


\begin{proof}
The proof is essentially identical to the proof of Lemma \ref{associative}. Assume that
\[
a = \sum_{\rho \in \Sigma_{n+m}} \alpha_{\rho(1)} \otimes \cdots \otimes \alpha_{\rho(n+m)},
\]
and 
\[
b = \sum_{\tau \in \Sigma_m} \beta_{\tau(1)} \otimes \cdots \otimes \beta_{\tau(m)}.
\]
Write $a^\rho = \alpha_{\rho(1)} \otimes \cdots \otimes \alpha_{\rho(n+m)}$ and $\beta^\tau = \beta_{\tau(1)} \otimes \cdots \beta_{\tau(m)}$. Pick a split $\sigma = \{i_1,\dots,i_n\}, \{j_1,\dots,j_m\}$ of $[n+m]$. Then
\[
(f \cdot_\sigma b^\tau)\ast_g a^\rho  = (f \ast_g (\alpha_{\rho(i_1)} \otimes \cdots \otimes \alpha_{\rho(i_n)})) \cdot_\sigma ((\beta_{\tau(1)} \otimes \cdots \otimes \beta_{\tau(m)}) \ast_g (\alpha_{\rho(i_1)} \otimes \cdots \otimes \alpha_{\rho(i_m)})),
\]
and $(f \cdot b) \ast_g a$ is the sum of these over all choices of $\rho,\tau,\sigma$. This is exactly $(f \otimes b) \ast_g \Delta(a)$.
\end{proof}

\begin{remark}
Expanding on this componentwise definition of our products in $\cP^\Sigma \otimes \cP^\Sigma$, if we write $\Delta(a) = \sum a^{(1)} \otimes a^{(2)}$. Then
\[
(f \cdot b) \ast_g a = (f \otimes b) \ast_g \Delta(a) = \sum (f \ast_g a^{(1)}) \cdot (b \ast_g a^{(2)}).
\]
\end{remark}

\subsection{Properties of $\cP_\Sigma$} We are ultimately interested in $\cP_\Sigma$. It is much easier to work directly with $\cP^\Sigma$ and most of this subsection will be devoted to transferring results from $\cP^\Sigma$ to $\cP_\Sigma$ via $\fG$. 

First we notice that $\cP_\Sigma$ has an algebra structure. We can define $f \cdot h$ to be the image of $\tilde{f} \cdot_\sigma \tilde{h}$ under $B \to B_\Sigma$ for any split $\sigma$ and lifts $\tilde{f},\tilde{h} \in \cP$ of $f,h \in \cP_\Sigma$. It is not hard to see that this is independent of the choice of lists and the choice of split. To define a $\ast_g$-product on $\cP_\Sigma$ we must rely heavily on $\cP^\Sigma$ and the fact that $\fG$ is a linear isomorphism. We define
\begin{equation} \label{ast}
f \ast_g h = \fG^{-1}(\fG(f) \ast_g \fG(h)).
\end{equation}

The reason for constructing this algebra is to prove a bounded generation result about the secant ideals of Pl\"ucker-embedded Grassmannians. We will show how to get this bounded generation from the finite generation in $\cP_\Sigma$ for the sum of the Pl\"ucker ideals as a consequence of what we have developed. This example is complicated and skipping it will not detract from ones understanding of the paper. We include it to both explicitly illustrate the techniques we are developing and demonstrate the need for these more general techniques due to the difficulty of explicitly working through the easiest possible case, i.e, the $0$th secant ideal.

\begin{example} \label{pluckerfingen}
First, we have the Pieri decomposition,
\[
\bigwedge^p \otimes \bigwedge^p = \bigoplus_{i=0}^p \bS_{(2^i,1^{2(p-i)})}.
\]
The symmetric square has a simple description,
\[
\Sym^2\left(\bigwedge^p \right) = \bigoplus_{\substack{0 \leq i \leq p, \\ i \equiv p \pmod 2}} \bS_{(2^i,1^{2(p-i))})}.
\]
The Pl\"ucker equations of the Grassmannian $\Gr(p,n)$ span the sum of the representations where $i < p$. Taking H\"odge dual isomorphisms, we can generate all the Pl\"ucker equations from the basic ones $f_1,f_2,\dots,$ where $f_i \in \Sym^2(\bigwedge^{2n})$ is defined as follows,
\[
f_n = \frac{1}{2} \sum_{S \subset [4n], \; |S| = 2n} (-1)^{\sgn(S)}x_Sx_{[4n]\setminus S}.
\]
To show the Pl\"ucker equations are all generated from finitely many equations it suffices to prove that the ideal generated by the $f_i$ in $\cP_\Sigma$ is finitely generated. To see this explicitly we will show all the $f_i$ are generated by $f_1$ under our products. In particular, we claim
\begin{equation} \label{show}
\sum_{g,\sigma} (-1)^{\sgn(T)} f_n \ast_g (x_{\sigma(1)\sigma(2)}x_{\sigma(3)\sigma(4)}) = \tfrac{\gamma(n)}{2} f_{n+1}
\end{equation}
Where
\[
\gamma(n) = \frac{\binom{4n}{2n}\cdot 6 \cdot \binom{4n+3}{2n+1}}{\binom{4n+4}{2n+2}}.
\]
The numerator is the total number of terms in the sum. The denominator is the number of terms in $f_{n+1}$. We then divide by $2$ because of $\fG$. It is not hard to see $\gamma(n)$ this is an even integer.

We sum over all $g \in {\rm Inc}(\bN)$, $g|_{[4n]}: [4n] \to [4n+4]$ such that $g(1) = 1$ and only one $g$ acts trivially on $f_n$. We also sum over all $\sigma \in \Sigma_4/(\Sigma_2 \times \Sigma_2)$ permutations such that $\sigma(1) < \sigma(2)$ and $\sigma(3) < \sigma(4)$. Let $(j_1<j_2<j_3<j_4) = [4n+4] \setminus g([4n])$ where $j_k$ is the image of $k$ in $[4n+4]$. In this sum, $T = (g(1),g(2),\dots,g(2n),j_{\sigma(1)},j_{\sigma(2)},g(2n+1),g(n+2),\dots,g(4n),j_{\sigma(3)},j_{\sigma(4)})$ and $\sgn(T)$ is the sign of the permutation that orders $T$.  Tracing through definitions, we find that on monomials
\[
x_{i_1i_2}x_{i_3i_4} \ast_g x_{j_1j_2}x_{j_3j_4} = \tfrac{1}{2} ( x_{g(i_1)g(i_2)j_1j_2} x_{g(i_3)g(i_4)j_3j_4} + x_{g(i_1)g(i_2)j_3j_4} x_{g(i_3)g(i_4)j_1j_2}). 
\]
To see this we explicitly calculate what \eqref{ast} does to monomials $f$ and $h$. First we know that
\[
\fG(x_{i_1i_2}x_{i_3i_4}) = \tfrac{1}{2}(x_{i_1i_2} \otimes x_{i_3i_4} + x_{i_3i_4} \otimes x_{i_1i_2}).
\]
So
\[
\fG(x_{i_1i_2}x_{i_3i_4}) \ast_g \fG(x_{j_1j_2}x_{j_3j_4}) = \tfrac{1}{4} (( x_{i_1i_2} \otimes x_{i_3i_4} + x_{i_3i_4} \otimes x_{i_1i_2}) \ast_g ( x_{j_1j_2} \otimes x_{j_3j_4} + x_{j_3j_4} \otimes x_{j_1j_2})).
\]
Expanding by linearity this becomes
\begin{align*}
&\tfrac{1}{4}\left( x_{g(i_1)g(i_2)j_1j_2} \otimes x_{g(i_3)g(i_4)j_3j_4} + x_{g(i_3)g(i_4)j_3j_4} \otimes x_{g(i_1)g(i_2)j_1j_2} \right) \\
&\quad + \tfrac{1}{4} \left( x_{g(i_1)g(i_2)j_3j_4} \otimes x_{g(i_3)g(i_4)j_1j_2} +  x_{g(i_3)g(i_4)j_1j_2} \otimes x_{g(i_1)g(i_2)j_3j_4} \right).
\end{align*}
Now when we apply $\fG^{-1}$ we notice this is exactly
\[
\tfrac{1}{2} (x_{g(i_1)g(i_2)j_1j_2} x_{g(i_3)g(i_4)j_3j_4} + x_{g(i_1)g(i_2)j_3j_4} x_{g(i_3)g(i_4)j_1j_2}).
\]
Now to justify our claim, we will show explicitly that
\begin{equation} \label{fonesum}
\sum_{g,\sigma} f_1 \ast_g (x_{i_\sigma(1)i_\sigma(2)}x_{i_\sigma(3)i_\sigma(4)}) = 9 f_{2}
\end{equation}
and the proof in the general case is exactly the same but with more indices. Referring to the definition,
\[
f_1 = x_{12}x_{34}- x_{13}x_{24} + x_{14}x_{23}.
\]
We will also drop the $\tfrac{1}{2}$ in the definition of $f_i$ without loss of generality because if we show
\[
2 \cdot \sum_{g,\sigma} (-1)^{\sgn(T)} f_n \ast_g (x_{\sigma(1)\sigma(2)}x_{\sigma(3)\sigma(4)}) = \tfrac{\gamma(n)}{2} (2 \cdot f_{n+1})
\]
this is equivalent to proving \eqref{show}. We will compute an element of the sum in \eqref{fonesum} for one choice of $g$ to illustrate how we would proceed in general. We will then show that every term of $f_2$ appears.  If we fix $g = {\rm id}$ and $\sigma = {\rm id}$. Then $[8] \setminus g([4]) = (5,6,7,8)$. So this element of our summand is
\[
(x_{12}x_{34}- x_{13}x_{24} + x_{14}x_{23}) \ast_g (x_{12}x_{34}).
\]
From what we computed above we know this is
\begin{align*}
\tfrac{1}{2} (x_{1256}x_{3478} + x_{1278}x_{3456} - x_{1356}x_{2478} - x_{1378}x_{2456} +x_{1456}x_{2378} + x_{1478}x_{2356})
\end{align*}
So we see that each term in our summand gives us $6$ unique terms. In what follows we will compute without carrying through the $\tfrac{1}{2}$ and add it back in at the end to simplify the exposition. That is, we will prove 
\[
2 \sum_{g,\sigma} f_1 \ast_g (x_{i_\sigma(1)i_\sigma(2)}x_{i_\sigma(3)i_\sigma(4)}) = 18 f_{2}
\]
and this implies \eqref{fonesum}. So every time we apply $\ast_g$ we will not include the resulting $\tfrac{1}{2}$. We will now show that we can get any term $\beta = x_{i_1i_2i_3i_4}x_{i_5i_6i_7i_8}$ by choosing the correct summand of $f_1$, and choice of $g$ and $\sigma$. We can assume that $i_1 < i_2$ and $i_5 < i_6$. Under these conditions there are three possible orders that can occur if we fix $i_1 = 1$. We could have $i_1 < i_2 < i_5 < i_6$, $i_1 < i_5 < i_2 < i_6$ or $i_1 < i_5 < i_6 < i_2$. This choice will determine which of the monomials in $f_1$ we will use to get $\beta$.

In particular, suppose we have $i_1 < i_5 < i_2 < i_6$, then we will use the term $x_{13}x_{24}$ because we know $g$ is order preserving and so this term that will have this order if we insert the numbers as the first two indices in each monomial. Choose $g$ with
\[
g(1) = i_1, \qquad \qquad g(2) = i_5,  \qquad \qquad g(3) = i_2,  \qquad \qquad g(4) = i_6.
\]
Now we have four numbers remaining $\{j_1<j_2<j_3<j_4\} = [8] \setminus g([4])$. Suppose $\tau$ is the permutation with $\tau(i_3) < \tau(i_4) < \tau(i_7) < \tau(i_8)$. Realize $\tau$ as an element of $\Sigma_4/(\Sigma_2 \times \Sigma_2)$ and let $\sigma = \tau^{-1}$. After we apply $\ast_g$ we know $i \mapsto j_i$. We also have $\tau(i_3) = j_1$, $\tau(i_4) = j_2$, $\tau(i_7) = j_3$ and $\tau(i_8) = j_4$. So $\sigma(i)$ maps to  $\sigma j_i = \sigma \tau(i_k) = i_k$, where the permutations act on the indices.

Then in this summand if we focus on the monomial coming from $x_{13}x_{24}$ with our selected $g$ and $\sigma$, we have
\[
x_{13}x_{24} \ast_g x_{\sigma(1)\sigma(2)}x_{\sigma(3)\sigma(4)} = x_{i_1i_2i_3i_4}x_{i_5i_6i_7i_8}.
\]
We only mention this once more, but here we recall that we have multiplied the entire sum by $2$. Furthermore, notice this term has the correct sign. The monomial $x_{13}x_{24}$ is negative in $f_1$ because the permutation ordering this sequence is odd. The monomial above should have sign $(-1)^{\sgn(\tau)}$ where $\tau$ orders the set $(i_1,i_2,i_3,i_4,i_5,i_6,i_7,i_8)$. By construction this set is $(g(1),g(3),i_3,i_4,g(2),g(4),i_7,i_8)$. In our equation this monomial will have sign
\[
(-1)^{\sgn(T)\sgn(\sigma)+1}.
\]
If we apply the odd permutation swapping $g(2)$ with $g(3)$ we have $\sgn(\tau)+1$ is the sign of the permutation ordering $(g(1),g(2),i_3,i_4,g(3),g(4),i_7,i_8)$. But by construction $i_3 = \sigma(j_1)$ etc where $\{j_1,j_2,j_3,j_4\} = [8] \setminus g([4])$. So $\sgn(\tau)+1$ is the sign of the permutation ordering
\[
(g(1),g(2),\sigma(j_1),\sigma(j_2),g(3),g(4),\sigma(j_3),\sigma(j_4)).
\]
This is precisely $\sgn(T)+1$. So the signs match. Notice this argument will work for every other term as well because we apply the permutation to correct the $g(i)$ terms, this sign will cancel with the sign the monomial has in $f_i$.

So every term that appears in this sum has the correct sign and every term of $f_2$ appears in the sum. It remains to show that each term appears exactly $\gamma(n)$ times. In this case $n =1$, $\gamma(1) = 18$. So we want to show each term appears $18$ times. However, it suffices to show this for one term by symmetry.

%

Suppose $\beta = x_{1457}x_{2368}$. We will show $\beta$ appears $18$ times in our sum. $f_1$ has three terms in it, we consider each separately. If we first consider the term $x_{12}x_{34}$. There are two choices of $g$ and $\sigma$ which yield $\beta$. Recalling that $g(1) = 1$ for all $g$, the two choices are,
\[
g(2) = 4 \qquad g(3) = 6 \qquad g(4) = 8 \qquad \text{and} \qquad g(2) = 5 \qquad g(3) = 6 \qquad g(4) = 8.
\]
For the first map we need $\sigma$ to be the permutation $(1,3)(2,4)$ with $(j_1<j_2<j_3<j_4) = (2,3,5,7)$. For the second $\sigma$ is also the permutation $(1,3)(2,4)$. In both cases 
\[
x_{12}x_{34} \ast_g x_{\sigma(1)\sigma(2)}x_{\sigma(3)\sigma(4)} = x_{1457}x_{2368}.
\]
Now if we focus on the second monomial in $f$, $x_{13}x_{24}$ we will see there are $11$ pairs $(g,\sigma)$ which yield $\beta$. If we fix $g(3) = 4$, then there are $2$ choices for $g(2)$ and $2$ choices for $g(4)$, each paired with the appropriate $\sigma$. Similarly if we fix $g(3) = 5$, there are once again $4$ total options. If $g(3) = 7$, there are now $3$ options for $g(2)$ and only $1$ for $g(4)$ because $g$ must be increasing. This gives us $11$ total options.

Finally if we focus on the monomial $x_{14}x_{23}$, if we fix $g(4) = 4$, then $g(2)$ and $g(3)$ must also be fixed. If $g(4) = 5$, once again we can only have $g(2) = 2$ and $g(3) = 3$. If $g(4) = 7$, then there are $\binom{3}{2}$ options for where to send $2$ and $3$. Hence in this case there are $5$ total ways to get $\beta$.

In all, we found $2+11+5 = 18$ ways $\beta$ could appear in our sum. This shows that
\[
2 \sum_{g,\sigma} f_1 \ast_g (x_{i_\sigma(1)i_\sigma(2)}x_{i_\sigma(3)i_\sigma(4)}) = 18 f_2.
\]
Which is equivalent to \eqref{fonesum}. This same argument generalizes to any $f_n$. We are essentially appending missing terms on in all possible ways to account for the symmetry present in $f_n$. 

In particular, this shows that if $f_n$ is in our ideal, $f_{n+1}$ is as well. So $f_1$ generates all the $f_i$. By the above the Pl\"ucker ideal is finitely generated under the operations we have defined.
\end{example}


\begin{definition}
An ideal $\cJ \subseteq \cP_\Sigma$ is a {\bf di-ideal} if $\cG(\cJ)$ is closed under both $\ast_g$ and $\cdot$.
\end{definition}

For ease of notation we make the following definition

\begin{definition} \label{SumDef}
For any fixed $M\geq 2$, call the sum of the Pl\"ucker ideals corresponding to $\Gr(d,Md)$ as $d$ varies, $\cS_{M}$.  So that $(\cS_M)_{d,n}$ is the space of degree $n$ polynomials in the ideal of $\rho(\Gr(d,Md))$ where $\rho$ is the Pl\"ucker embedding.
\end{definition}

We ultimately wish to show that for any $r \geq 0$, $\cS_r$ is a di-ideal. We will see this can be deduced from the following lemma:

\begin{lemma} \label{coinvideal}
For any fixed $M\geq 2$, $\cS_{M}$, is closed under $\ast_g$ and $\cdot$ in $(\cP_\Sigma)_{M}$.
\end{lemma}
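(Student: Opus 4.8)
The proof splits according to the two products: closure under $\cdot$ is immediate, and closure under $\ast_g$ is the substance. The operation $\cdot$ preserves the exterior degree $d$, and on the slice $\bigoplus_n(\cP_\Sigma)_{d,n}=\Sym(\bigwedge^d\bk^{Md})$ it is ordinary polynomial multiplication. Since $\bigoplus_n(\cS_M)_{d,n}$ is, by Definition~\ref{SumDef}, the homogeneous ideal of the affine cone over the Pl\"ucker-embedded $\Gr(d,Md)$ inside that polynomial ring, it is automatically stable under multiplication by arbitrary elements, so $\cS_M\cdot\cP_\Sigma\subseteq\cS_M$.

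For closure under $\ast_g$, let $f\in(\cS_M)_{d,n}$ and $h\in(\cP_\Sigma)_{e,n}$; we must show $f\ast_g h$ lies in $(\cS_M)_{d+e,n}$, i.e.\ that it vanishes on every decomposable vector of $\bigwedge^{d+e}\bk^{M(d+e)}$. The first step is to reduce to $n=2$ with $f$ a Pl\"ucker quadric. Using that the Pl\"ucker ideal is generated in degree $2$, write $f=\sum_j c_j\cdot P_j$ with $P_j$ Pl\"ucker quadrics of $\Gr(d,Md)$ and $c_j\in(\cP_\Sigma)_{d,n-2}$; the symmetrized form of Lemma~\ref{associative} together with Lemma~\ref{comultproduct} gives
\[
f\ast_g h=\sum_j (c_j\otimes P_j)\ast_g\Delta(h)=\sum_j\sum (c_j\ast_g h^{(1)})\cdot(P_j\ast_g h^{(2)}),
\]
where $h^{(2)}$ runs over the tensor-degree-$2$ part of $\Delta(h)$. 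Since $\cS_M$ is already known to be $\cdot$-stable, it suffices to prove $P\ast_g h'\in(\cS_M)_{d+e,2}$ for $P$ a single Pl\"ucker quadric of $\Gr(d,Md)$ and $h'\in(\cP_\Sigma)_{e,2}$.

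For this base case I would unwind $\ast_g$ geometrically. Fixing $g$ identifies $\bk^{M(d+e)}$ with $U\oplus U'$, $U$ the coordinate subspace on $g([Md])$ and $U'$ that on its complement, and on a single tensor slot $\ast_g$ is the dual of the K\"unneth projection $\bigwedge^{d+e}(U\oplus U')\twoheadrightarrow\bigwedge^d U\otimes\bigwedge^e U'$ (after the evident identifications $\bigwedge^d U\cong\bigwedge^d\bk^{Md}$, $\bigwedge^e U'\cong\bigwedge^e\bk^{Me}$). Writing $h'=\psi\cdot\psi$ with $\psi\in\bigwedge^e\bk^{Me}$ (such squares span $\Sym^2$) and polarizing $P$, one computes that for a decomposable $\Omega=W_1\wedge\cdots\wedge W_{d+e}$ with $W_j=u_j+u_j'$ ($u_j\in U$, $u_j'\in U'$),
\[
\big(P\ast_g(\psi\cdot\psi)\big)(\Omega)=P(\Xi),\qquad \Xi=\sum_{|A|=d}(-1)^{\epsilon(A)}\,\big\langle\psi,\ \textstyle\bigwedge_{j\notin A}u_j'\big\rangle\ \textstyle\bigwedge_{j\in A}u_j,
\]
the sum over $d$-element subsets $A\subseteq\{1,\dots,d+e\}$ with Koszul signs $\epsilon(A)$. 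The heart of the matter is to see that $\Xi$ is a scalar multiple of a decomposable $d$-vector: via a Laplace expansion it equals the collection of ``type $(d,e)$'' maximal minors of a $(d+e)\times(Md+e)$ matrix assembled from $\Omega$ and $\psi$, and these are the Pl\"ucker coordinates of an honest $d$-dimensional subspace of $\bk^{Md}$ (obtained by intersecting $\langle W_1,\dots,W_{d+e}\rangle$ with a suitable $(Md+e)$-dimensional coordinate subspace and projecting to $\bk^{Md}$). Granting this, $P(\Xi)=0$ because $P$ vanishes on $\Gr(d,Md)$, so $P\ast_g(\psi\cdot\psi)$ vanishes on all decomposables and lies in $(\cS_M)_{d+e,2}$; undoing the reduction and the (characteristic-zero, hence invertible) normalizations coming from $\fG$ completes the proof.

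The step I expect to be the main obstacle is exactly the claim that $\Xi$ is always decomposable — that the ``type $(d,e)$'' slice of the Pl\"ucker data of $\Omega$, contracted against the quadratic datum $\psi$, still arises from a linear subspace. This is where the numerology is used: the ambient dimension is exactly $Md+Me=M(d+e)$, so $U$ and $U'$ have precisely the dimensions that force the relevant intersection to be $d$-dimensional. Handling this uniformly for all $\psi$ (not just those of the form $v_S\cdot v_T$, where $\Xi$ is visibly a coordinate wedge) is the delicate point; it is the exterior-power analogue of the rank-preservation argument underlying the Veronese case in \cite{Sa1}.
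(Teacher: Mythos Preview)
Your reduction to degree $2$ and the identity $(P\ast_g\psi^2)(\Omega)=P(\Xi)$ are correct, but the decisive claim---that $\Xi$ is always decomposable---is false once $\psi$ is not itself decomposable. Take $d=e=2$, $M=2$, $g=\mathrm{id}$ on $[4]$, and $W_j=e_j+e_{j+4}$ (so $u_j=e_j$, $u_j'=e_j'$). For $\psi=e_1'\wedge e_2'+e_3'\wedge e_4'$ only the pairings with $A=\{1,2\}$ and $A=\{3,4\}$ survive, giving $\Xi=e_1\wedge e_2+e_3\wedge e_4$, the prototypical non-decomposable $2$-vector; the Pl\"ucker quadric $p_{12}p_{34}-p_{13}p_{24}+p_{14}p_{23}$ takes the value $1$ on it. Your ``$(d+e)\times(Md+e)$ matrix'' picture silently assumes $\psi$ arises as the minors of an $e\times Me$ matrix, i.e.\ that $\psi$ is decomposable. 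For such $\psi$ the Laplace argument does give a decomposable $\Xi$, but squares of decomposable $e$-vectors span only the Cartan piece $\bS_{(2^e)}\subset\Sym^2(\bigwedge^e\bk^{Me})$, not all of $\Sym^2$; and if you instead use products $\psi_1\cdot\psi_2$ of two decomposables (which do span) you land on the polarized value $\widetilde P(\Xi_1,\Xi_2)$, which has no reason to vanish even when both $\Xi_i$ lie on the Grassmannian. So the ``delicate point'' you flagged is not merely delicate---it is where the argument breaks.

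The paper takes a different route and never tests on a general decomposable $\Omega$. It does not reduce to degree~$2$; instead it argues (Claim~\ref{Equivariant}) that the span of the family $\{f\ast_g n\}$, as $f$ runs over Pl\"ucker equations, $g$ over ${\rm Inc}(\bN)$, and $n$ over $(\cP_\Sigma)_M$ of fixed bidegree, is $\GL(\bk^{M(d+e)})$-stable. Transitivity of $\GL$ on the Grassmannian then reduces the vanishing check to a single \emph{coordinate} decomposable $e_{i_1}\wedge\cdots\wedge e_{i_{d+e}}$, where it is immediate: every monomial in $f\ast_g n$ is $x_Ix_K$ with $I\neq K$, while only $x_{\{i_1,\dots,i_{d+e}\}}^2$ is nonzero at that point. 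The contraction/K\"unneth picture you set up plays no role in the paper's argument.
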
 

\begin{proof}

Using Weyman's construction of the Pl\"ucker equations in \cite[Proposition 3.1.2]{We}, any Pl\"ucker equation $f$ in the graded coordinate ring of the Pl\"ucker embedding of $\Gr(d,Md)$ which is contained in $(\cP_\Sigma)_{M}$ will be of the form
\[
f = \sum_{\beta} \sgn(\beta) x_{i_1\dots i_u j_{\beta(1)} \dots j_{\beta(d-u)}} x_{j_{\beta(d-u+1)} \dots j_{\beta(2d-u-v)} l_1\dots l_v}
\]
where we sum over all permutations $\beta$ of $\{1,\dots,d-u-v\}$ such that $\beta(1) < \beta(2) < \cdots < \beta(d-u)$ and $\beta(d-u+1) < \beta(d-u+2) < \cdots < \beta(2d-u-v)$. And we choose $i_1,\dots,i_u, j_1,\dots,j_{2d-u-v},l_1,\dots,l_v$ as distinct elements from $[Md]$.

Each term in $f$ is a product $x_Ix_K$ with $I \not= K$, $|I|=|K| = d$ as seen in \cite[Proposition 3.1.6]{We}. An alternative way to see this is to refer back to the description of the image of the Grassmannian in Example \ref{pluckerfingen}. The Pl\"ucker equations span the sum of the representations 
\[
\Sym^2\left(\bigwedge^d \right) = \bigoplus_{\substack{0 \leq i \leq d, \\ i \equiv d \pmod 2}} \bS_{(2^i,1^{2(r-i))})},
\]
with $i < d$. If $I = K$, this would imply we only use $d$ distinct numbers as indices in $x_Ix_K$. However, this only occurs in the representation $\bS_{2^d}$, which means the corresponding element could not be a Pl\"ucker equation. 

Fix an element $n \in (\cP_\Sigma)_{M}$ of bidegree $(a,m)$. For any Pl\"ucker equation $f$, to get $f \ast_g n$, from Example \ref{pluckerfingen}, we append additional distinct indices to the monomials appearing in $f$. Hence, each term in $f \ast_g n$ consists of $x_Ix_K$ where $I \not= K$, with $|I| = |K| = d+a$.

As in Weyman, we may identify each $x_I$ with the element $e_I^\ast \in \wedge^{d+a} (\bk^{M(d+a)})^\ast$ where $e_i^\ast$ denotes the dual basis element to the standard basis element $e_i$ of $\bk^{M(d+a)}$. 

To show that $f \ast_g n \in J$ it suffices to show that it vanishes on all decomposable elements of $\bk^{M(d+a)}$. To do this consider all of the equations given by $f \ast_g n$ for all $f$ a Pl\"ucker equation, $g \in {\rm Inc}(\bN)$ and $n \in (\cP_\Sigma)_{M}$. The following computation is not that enlightening, but the result is very important so we will state it as a claim and skipping the proof will not take away from the proof of this lemma. 

\begin{claim} \label{Equivariant}
The collection of equations $f \ast_g n$ with $f$ a Pl\"ucker equation, $g \in {\rm Inc}(\bN)$ and $n \in (\cP_\Sigma)_{M}$ of fixed bidegree $(a,m)$ is $\GL(\bk^{M(d+a)})$-invariant. 
\end{claim}

\begin{proof}
Indeed, given $h \in \GL(\bk^{M(d+a)})$, we have
\[
h(f \ast_g n) =  [g^{-1}hg f] \ast_{g} (g')^{-1}hg'n. 
\]
Where $g' \in {\rm Inc}(\bN)$ is the map induced by $g$ which sends $[2a]$ to $[M(d+a)] \setminus g([Md])$ in increasing order. It is easy to check on indices that this is valid. 

The set of Pl\"ucker equations is $\GL$-invariant \cite[Proposition 3.1.2]{We} and we can view $g^{-1}hg \in \GL(\bk^{Md})$, so $(g^{-1}hg) f$ is another Pl\"ucker equation. Also, $(g')^{-1}hg'n \in (\cP_\Sigma)_{M}$ and has the same bidegree $(a,m)$.
\end{proof}

Furthermore, $\GL(\bk^{M(d+a)})$ acts transitively on $\Gr(d+a,M(d+a))$ and so acts transitively on the decomposable elements of $\bigwedge^{d+a}\bk^{M(d+a)}$. As a result, it suffices to show that all the equations $f \ast_g n$ as $f$, $g$ and $n$ vary as described in Claim \ref{Equivariant} vanish on one decomposable element, say $e_{i_1} \wedge \cdots \wedge e_{i_{d+a}}$, where $i_1 < \dots < i_{d+a}$ are chosen from $[M(d+a)]$. 

Indeed, any decomposable element is in the $\GL$-orbit of this element, but the set of $f \ast_g n$ with $f$, $g$ and $n$ varying is $\GL$-invariant. Accordingly, if every equation in this set vanishes on $e_{i_1} \wedge \cdots \wedge e_{i_{d+a}}$, they will vanish on every decomposable element and so will be in $\cS_{M}$.

However, it must be the case that the set of equations $f \ast_g n$ for any $f$, $g$ and $n$ vanish on $e_{i_1} \wedge \cdots \wedge e_{i_{d+a}}$, because only $e_{i_1}^\ast \wedge \cdots \wedge e_{i_{d+a}}^\ast$ does not vanish on $e_{i_1} \wedge \cdots \wedge e_{i_{d+a}}$. As mentioned above, $f \ast_g n$ is a sum of products of terms of the form $x_Ix_K$ where $I \not= K$, so one of $x_I$ or $x_J$ will vanish. This means every term in the sum vanishes, i.e. $f \ast_g n$ vanishes for any choice of $f$ and $n$.

The case for $f \cdot n$ is clearer because we just multiply $f$ and $n$ in $\cP_\Sigma$. By definition $f$ will vanish on any decomposable element and so $f \cdot n$ will vanish as well.
\end{proof}

We also notice that $\cP_\Sigma$ has a natural comultiplication $\Delta$ defined in the following way. If $v_1 \cdots v_n \in \Sym^n(\wedge^d \bk^{Md})$, then
\[
\Delta(v_1\cdots v_n) = \sum_{S \subseteq [n]} v_S \otimes v_{[n]\setminus S},
\]
where $v_S = \prod_{i \in S} v_i \in \Sym^{|S|}(\bigwedge^d \bk^{Md})$. Less explicitly, but still importantly, this is defined in the usual way by letting $w \mapsto w \otimes 1 + 1 \otimes w$ when $w \in \Sym^1(\bigwedge^d \bk^{Md})$ and extending uniquely while requiring that $\Delta \colon \cP_\Sigma \to \cP_\Sigma \otimes \cP_\Sigma$ be an algebra homomorphism.

We will now show that Lemma \ref{coinvideal} implies $J$ is a di-ideal. To do this, we need the following results.

\begin{proposition} \label{bigradedisomorphism}
The symmetrization map $\cG \colon \cP_\Sigma \to \cP^\Sigma$ is an isomorphism of bigraded bialgebras under the $\cdot$ product. More precisely, the following two diagrams commute:
\[
\begin{tikzcd}
\cP_\Sigma \otimes \cP_\Sigma \ar{r}[above]{\cdot} \ar{d}[left]{\cG \otimes \cG} & \cP_\Sigma \ar[d,"\cG"]\\
\cP^\Sigma \otimes \cP^\Sigma \ar{r}[above]{\cdot} & \cP^\Sigma
\end{tikzcd}
\qquad  \qquad
\begin{tikzcd}
\cP_\Sigma \ar{r}[above]{\Delta} \ar{d}[left]{\cG} & \cP_\Sigma \otimes \cP_\Sigma \ar[d,"\cG \otimes \cG"]\\
\cP^\Sigma \ar{r}[above]{\Delta} & \cP^\Sigma \otimes \cP^\Sigma
\end{tikzcd}
\]
\end{proposition}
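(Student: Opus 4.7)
The plan is to verify the two commutative diagrams separately, exploiting the fact (established in the preceding discussion) that $\cG$ is already a bigraded linear isomorphism, being by construction the inverse of the natural composition $\iota_* \colon \cP^\Sigma \hookrightarrow \cP \twoheadrightarrow \cP_\Sigma$. Both claims reduce to checking identities on pure-tensor representatives, and the work is essentially combinatorial.

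For the product diagram (the left one), I would first use bilinearity of both $\cdot$-products and of $\cG$ to reduce to the case of monomial representatives $f = [u_1 \otimes \cdots \otimes u_n] \in (\cP_\Sigma)_{d,n}$ and $g = [v_1 \otimes \cdots \otimes v_m] \in (\cP_\Sigma)_{d,m}$. Unfolding definitions: $f \cdot g$ in $\cP_\Sigma$ is (via the trivial split and the definition in this section) the class $[u_1 \otimes \cdots \otimes u_n \otimes v_1 \otimes \cdots \otimes v_m]$, so $\cG(f \cdot g)$ is its symmetrization over $\Sigma_{n+m}$. On the other hand, $\cG(f) \cdot \cG(g)$ sums, over all $\binom{n+m}{n}$ splits $\pi$ of $[n+m]$, the shuffle products $\cG(f) \cdot_\pi \cG(g)$, each of which already contains a symmetrization over $\Sigma_n$ and $\Sigma_m$ respectively. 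The key combinatorial identity is that every permutation $\tau \in \Sigma_{n+m}$ decomposes uniquely as a choice of split $\pi$ together with a permutation of the $n$ left-block positions and a permutation of the $m$ right-block positions, so that the parametrizations on the two sides match up to the normalizing factors coming from the $\tfrac{1}{n!}$, $\tfrac{1}{m!}$, and $\tfrac{1}{(n+m)!}$ inside $\cG$.

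For the coproduct diagram (the right one), I would use the universal property. The comultiplication $\Delta$ on $\cP^\Sigma$ was defined to be the unique $\cdot$-algebra homomorphism sending $w \in \cP^\Sigma_{d,1}$ to $1 \otimes w + w \otimes 1$. On $\cP_\Sigma$, the explicit formula $\Delta(v_1 \cdots v_n) = \sum_{S \subseteq [n]} v_S \otimes v_{[n] \setminus S}$ is also an algebra homomorphism for the $\cP_\Sigma$ product and agrees with $w \mapsto 1 \otimes w + w \otimes 1$ on degree-$1$ elements, where $\cG$ acts as the identity. Granting the first diagram, the composite $(\cG \otimes \cG) \circ \Delta \circ \cG^{-1} \colon \cP^\Sigma \to \cP^\Sigma \otimes \cP^\Sigma$ is a $\cdot$-algebra homomorphism agreeing with $\Delta$ on degree-$1$ generators, so it must equal $\Delta$ by the uniqueness in the universal property, yielding the second diagram.

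The main obstacle will be the combinatorial bookkeeping in the first diagram: the $\tfrac{1}{n!}$ coming from the symmetrization in $\cG$, the sum over $\binom{n+m}{n}$ splits inside $\cdot$ on $\cP^\Sigma$, and the $(n+m)!$ arising from symmetrizing the concatenated product must cancel cleanly on pure-tensor monomials. Once that identity is verified, the coproduct diagram follows formally from the algebra-homomorphism universal property, so the remaining content of the proposition is entirely in the first diagram.
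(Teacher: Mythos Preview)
Your overall strategy---the bijection $\Sigma_{n+m}\cong(\text{splits of }[n+m])\times\Sigma_n\times\Sigma_m$ for the product square, then the universal property of $\Delta$ on degree-one generators for the coproduct square---is exactly the right shape, and it is what the paper has in mind (the paper simply defers to \cite[Proposition~3.8]{Sa1}).

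The gap is in the step you correctly flag as the ``main obstacle'' but do not actually carry out. With $\cG=\fG$ carrying the $\tfrac{1}{n!}$, the factors do \emph{not} cancel: your bijection gives
\[
\cG(f)\cdot\cG(g)=\frac{1}{n!\,m!}\sum_{\rho\in\Sigma_{n+m}}\rho\,(u\otimes v)=\binom{n+m}{n}\,\cG(f\cdot g),
\]
which is the content of Lemma~\ref{projprop}(2), not literal commutativity of the square; the same binomial discrepancy then propagates through your universal-property argument to the coproduct square. The diagrams \emph{do} commute on the nose if one uses the unnormalised map $\pi'$ (i.e.\ $w_1\cdots w_n\mapsto\sum_\sigma w_{\sigma(1)}\otimes\cdots\otimes w_{\sigma(n)}$): then both sides of the product square are $\sum_{\rho}\rho\,(u\otimes v)$ with no leftover scalar, and the formula $\Delta(\bv)=\sum_S\pi'(v_S)\otimes\pi'(v_{[n]\setminus S})$ established in the proof of Lemma~\ref{comultproduct} gives the coproduct square directly. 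So either the proposition should be read with $\pi'$ in place of $\fG$, or it is asserting a bialgebra isomorphism only up to these invertible binomial scalars---which is harmless, since that is all the downstream uses (Propositions~\ref{invariantideal} and~\ref{secantideal}) actually need. Either way, you should make this explicit rather than asserting that the normalisations ``cancel cleanly.''
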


\begin{proof}
This is the same proof as in \cite[Proposition 3.8]{Sa1}. 
%
%
\end{proof}

With this proposition, we can prove the following important fact:

\begin{proposition} \label{invariantideal}
If $\cJ \subseteq \cP_\Sigma$ is closed under $\ast_g$ and $\cdot$ in $\cP_\Sigma$, then $\cJ$ is a di-ideal. 
\end{proposition}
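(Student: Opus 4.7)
The plan is to unwind the definitions and use only two facts: Proposition \ref{bigradedisomorphism}, which tells us that $\cG$ intertwines the $\cdot$ products on $\cP_\Sigma$ and $\cP^\Sigma$, and the defining equation \eqref{ast}, which was designed precisely so that $\cG$ intertwines the $\ast_g$ products as well. No deep computation should be required; the content of the proposition is essentially that the definition of $\ast_g$ on $\cP_\Sigma$ makes $\cG$ a morphism on both sides at once.

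I would split the argument into two parallel closure statements for $\cG(\cJ)$ in $\cP^\Sigma$. First, for the $\cdot$ product: given $F \in \cG(\cJ)$ and $H \in \cP^\Sigma$, use surjectivity of $\cG \colon \cP_\Sigma \to \cP^\Sigma$ to write $F = \cG(f)$ with $f \in \cJ$ and $H = \cG(h)$ with $h \in \cP_\Sigma$. The left commutative square in Proposition \ref{bigradedisomorphism} gives
\[
F \cdot H \;=\; \cG(f) \cdot \cG(h) \;=\; \cG(f \cdot h),
\]
and the hypothesis that $\cJ$ is closed under $\cdot$ in $\cP_\Sigma$ yields $f \cdot h \in \cJ$, so $F \cdot H \in \cG(\cJ)$.

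Second, for the $\ast_g$ product: by definition \eqref{ast}, for all $f, h \in \cP_\Sigma$ and $g \in {\rm Inc}(\bN)$ we have $\cG(f \ast_g h) = \cG(f) \ast_g \cG(h)$. Writing $F = \cG(f)$, $H = \cG(h)$ as above, this gives
\[
F \ast_g H \;=\; \cG(f) \ast_g \cG(h) \;=\; \cG(f \ast_g h),
\]
and since $\cJ$ is closed under $\ast_g$ in $\cP_\Sigma$, the right-hand side lies in $\cG(\cJ)$.

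Combining the two displays shows $\cG(\cJ)$ is closed under both products in $\cP^\Sigma$, i.e.\ $\cJ$ is a di-ideal. The only subtlety worth double-checking is bihomogeneity: since $\cG$ preserves the bigrading (it is a bigraded isomorphism by Proposition \ref{bigradedisomorphism}) and since $\cJ$ is assumed to be a bihomogeneous subspace, $\cG(\cJ)$ is automatically a homogeneous subspace of $\cP^\Sigma$, which is all that the ideal definition in $\cP^\Sigma$ requires. I do not foresee any real obstacle here; the proof is essentially bookkeeping once \eqref{ast} and Proposition \ref{bigradedisomorphism} are in hand.
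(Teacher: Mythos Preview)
Your proposal is correct and is essentially the same argument as the paper's: both use Proposition \ref{bigradedisomorphism} to transfer closure under $\cdot$ and the defining equation \eqref{ast} (together with the bijectivity of $\cG$) to transfer closure under $\ast_g$. Your write-up is, if anything, slightly more explicit in spelling out the $\cdot$ case and in noting the bihomogeneity point.
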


\begin{proof}
Proposition \ref{bigradedisomorphism} shows that if $\cJ \subset \cB_\Sigma$ is an ideal under $\cdot$, the same is true for $\cG(\cJ) \subseteq \cP^\Sigma$. 

It remains to show $\cG(\cJ)$ is closed under the $\ast_g$ product in $\cP^\Sigma$. We have $f \ast_g n \in \cJ$ for all monomials $n$ and $f \in \cJ$. In $\cP_\Sigma$, we defined
\[
f \ast_g n = \cG^{-1}(\cG(f) \ast_g \cG(n)),
\]
so
\[
\cG(f \ast_g n) = \cG(f) \ast_g \cG(n).
\]
As $\cG$ is a linear isomorphism, we can write any $n' \in \cP^\Sigma$ as $\cG(n')$ for some $n' \in \cP_\Sigma$. To see $\cG(\cJ)$ is closed under $\ast_g$, we just need to check $\cG(f) \ast_g n' \in \cG(\cJ)$ for any $n' \in \cP^\Sigma$. Find $n \in \cP_\Sigma$ such that $\cG(n) = n'$, then we know
\[
f \ast_g n \in \cJ,
\]
hence
\[
\cG(f) \ast_g \cG(n) \in \cG(\cJ),
\]
but $\cG(n) = n'$.
\end{proof}

\begin{theorem} \label{diideal}
For any fixed $M \geq 2$, $\cS_M$, is a di-ideal.
\end{theorem}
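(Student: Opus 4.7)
The plan is that this theorem follows essentially immediately from the two preceding results, namely Lemma \ref{coinvideal} and Proposition \ref{invariantideal}; no new argument is required. Specifically, Lemma \ref{coinvideal} establishes that $\cS_M$ is closed under the products $\ast_g$ and $\cdot$ inside $\cP_\Sigma$, which is exactly the hypothesis of Proposition \ref{invariantideal}. Applying Proposition \ref{invariantideal} to $\cJ = \cS_M$ then yields that $\fG(\cS_M)$ is closed under both $\ast_g$ and $\cdot$ in $\cP^\Sigma$, which is precisely the definition of $\cS_M$ being a di-ideal.

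So the proof I would write is just a one-line invocation: by Lemma \ref{coinvideal}, $\cS_M$ satisfies the hypothesis of Proposition \ref{invariantideal}, and so $\cS_M$ is a di-ideal. The substantive content lives entirely in Lemma \ref{coinvideal}, where the Weyman construction of the Plücker equations is combined with the $\GL$-equivariance established in Claim \ref{Equivariant} and the transitivity of the $\GL$-action on decomposable elements of $\bigwedge^{d+a}\bk^{M(d+a)}$, to verify that $f \ast_g n$ and $f \cdot n$ remain vanishing on decomposable tensors.

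The only place where one might expect an obstacle is in checking carefully that the passage through the isomorphism $\fG$ preserves the closure property when translating from $\cP_\Sigma$ to $\cP^\Sigma$, but Proposition \ref{invariantideal} already handles this: closure under $\cdot$ transfers because $\fG$ is a bialgebra isomorphism for $\cdot$ by Proposition \ref{bigradedisomorphism}, and closure under $\ast_g$ transfers by the very definition $f \ast_g n = \fG^{-1}(\fG(f) \ast_g \fG(n))$ on $\cP_\Sigma$, together with surjectivity of $\fG$. Thus the theorem is nothing more than a bookkeeping consequence of the structural results already established.
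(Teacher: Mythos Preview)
Your proposal is correct and matches the paper's own proof, which simply says to combine Lemma \ref{coinvideal} with Propositions \ref{bigradedisomorphism} and \ref{invariantideal}. Your additional commentary about where the substantive content lies and how $\fG$ transfers the closure properties is accurate and consistent with the paper's development.
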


\begin{proof}
Combine Lemma \ref{coinvideal} with Propositions \ref{bigradedisomorphism} and \ref{invariantideal}.
\end{proof}

\section{Joins and Secants} \label{Joins and Secants}
Let $V$ be a vector space and $\Sym(V)$ be its symmetric algebra. Given ideals $I, J \subset \Sym(V)$, their {\bf join} $I \star J$ is the kernel of 
\[
\Sym(V) \xrightarrow{\Delta} \Sym(V) \otimes \Sym(V) \to \Sym(V)/I \otimes \Sym(V)/J,
\]
where the first map is the standard comultiplication. Note that $\star$ is an associative and commutative operation since $\Delta$ is coassociative and cocommutative. Set $I^{\star 0} = I$ and $I^{\star r} = I \star I^{\star(r-1)}$ for $r > 0$.

\begin{proposition}\cite[Proposition 4.1]{Sa1}
Assume $\bk$ is an algebraically closed field. If $I$ and $J$ are radical ideals, then $I \star J$ is a radical ideal. If $I$ and $J$ are prime ideals, then $I \star J$ is a prime ideal.
\end{proposition}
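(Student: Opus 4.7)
The plan is to interpret the join geometrically and reduce each claim to standard facts about images of morphisms.

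First I would identify $I \star J$ as the vanishing ideal of the Zariski closure of a particular image. Let $\mu \colon V \times V \to V$ denote the addition map $(x,y) \mapsto x+y$. Under the identification $\Sym(V \oplus V) = \Sym(V) \otimes \Sym(V)$, the corresponding algebra homomorphism $\mu^{\ast} \colon \Sym(V) \to \Sym(V) \otimes \Sym(V)$ is precisely the comultiplication $\Delta$ (send $v \in V$ to $v \otimes 1 + 1 \otimes v$ and extend as an algebra map). The closed subscheme $V(I) \times V(J) \subset V \times V$ has coordinate ring $\Sym(V)/I \otimes \Sym(V)/J$, so the composition in the definition of $I \star J$ is exactly the pullback along
\[
V(I) \times V(J) \hookrightarrow V \times V \xrightarrow{\ \mu\ } V.
\]
Hence $I \star J$ is the kernel of the ring map dual to this composition; since $\bk$ is algebraically closed, its vanishing locus is the Zariski closure of $\mu(V(I) \times V(J)) = V(I) + V(J) \subseteq V$.

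Now both assertions follow from standard properties. For the radical statement, the vanishing ideal of any closed subset of $V$ (equivalently, of a reduced closed subscheme) is radical by the Nullstellensatz, so $I \star J$ is radical whenever it is defined as $I(\overline{V(I)+V(J)})$; the hypothesis that $I$ and $J$ are radical is not actually needed for this direction, but in any case $V(I)$ and $V(J)$ really are the zero sets of $I,J$ under our hypothesis.

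For the prime statement, assume $I$ and $J$ are prime, so that $V(I)$ and $V(J)$ are irreducible closed subvarieties of $V$. Over an algebraically closed field the product of two irreducible varieties is irreducible, so $V(I) \times V(J)$ is irreducible. The continuous image of an irreducible space is irreducible, and the closure of an irreducible subset is irreducible, hence $\overline{V(I)+V(J)}$ is an irreducible closed subvariety of $V$, and its vanishing ideal $I \star J$ is prime.

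The only step that requires any care is the compatibility verification in the first paragraph, namely that $\mu^{\ast}=\Delta$ under the natural identification; this is immediate on the generating set $V \subset \Sym(V)$ and extends by the universal property of the symmetric algebra. The rest is a direct application of classical facts about irreducibility and radical ideals over an algebraically closed field.
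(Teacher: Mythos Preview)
The paper does not supply its own proof here; it simply cites \cite[Proposition 4.1]{Sa1}. So there is no approach to compare against, only correctness to check.

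Your geometric setup is right: $I \star J$ is the kernel of $\Sym(V) \to \Sym(V)/I \otimes \Sym(V)/J$, and this map is pullback along $V(I)\times V(J)\hookrightarrow V\times V \xrightarrow{\mu} V$. But the argument for radicality is circular. You show $V(I\star J)=\overline{V(I)+V(J)}$ and then say ``$I\star J$ is radical because it is the vanishing ideal of this closed set.'' That equality of ideals is exactly what is in question: by the Nullstellensatz $I(V(I\star J))=\sqrt{I\star J}$, so identifying $I\star J$ with the vanishing ideal already assumes it is radical. Your parenthetical remark that the radical hypothesis on $I,J$ ``is not actually needed'' is false: take $V=\bk$, $I=J=(x^2)$; then $x\mapsto y+z$ in $\bk[y,z]/(y^2,z^2)$, and one computes $I\star J=(x^3)$ in characteristic $\ne 2$ (and $(x^2)$ in characteristic $2$), which is not radical.

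The fix is immediate and purely algebraic: over an algebraically closed field the tensor product of reduced (respectively, integral) finitely generated $\bk$-algebras is again reduced (respectively, integral). Hence when $I,J$ are radical (respectively, prime), $\Sym(V)/I\otimes\Sym(V)/J$ is reduced (respectively, a domain), and the kernel of any homomorphism into it is radical (respectively, prime). With this in place your prime argument also goes through, since once $I\star J$ is known to be radical it coincides with the vanishing ideal of the irreducible set $\overline{V(I)+V(J)}$.
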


These definitions make sense for ideals $\cI, \cJ \subseteq \cP_\Sigma$, so we can define the join $\cI \star \cJ$. To be precise, $(\cI \star \cJ)_{d,n}$ is the kernel of the map
\[
(\cP_\Sigma)_{d,n} \xrightarrow{\Delta} \bigoplus_{i=0}^n (\cP_\Sigma/\cI)_{d,i} \otimes (\cP_\Sigma/\cJ)_{d,n-i}.
\]
Since $\cG$ is compatible with $\Delta$ (Proposition \ref{bigradedisomorphism}), we deduce that
\[
\cG(\cI \star \cJ) = \cG(\cI) \star \cG(\cJ).
\]
\begin{proposition} \label{secantideal}
If $\cI,\cJ \subseteq \cP_\Sigma$ are di-ideals, then $\cI \star \cJ$ is a di-ideal.
\end{proposition}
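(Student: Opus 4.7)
The plan is to verify directly that $\cI \star \cJ$ is closed under $\cdot$ and $\ast_g$ in $\cP_\Sigma$; by Proposition \ref{invariantideal} this suffices to conclude it is a di-ideal. The key structural input is that the comultiplication $\Delta$ is compatible with both products. For $\cdot$ this is automatic, since $\Delta$ was defined as an algebra homomorphism with respect to $\cdot$. For $\ast_g$ this is Lemma \ref{comultproduct} in $\cP^\Sigma$; to transfer it to $\cP_\Sigma$, I would use Proposition \ref{bigradedisomorphism} (which says $\cG$ intertwines $\Delta$) together with the very definition $\cG(f \ast_g h) = \cG(f) \ast_g \cG(h)$, so that componentwise $\ast_g$ on $\cP_\Sigma \otimes \cP_\Sigma$ satisfies $\Delta(f \ast_g h) = \Delta(f) \ast_g \Delta(h)$ as well.

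Next, I would translate the definition of the join into a concrete condition: $f \in \cI \star \cJ$ if and only if $\Delta(f)$ lies in the subspace $K := \cI \otimes \cP_\Sigma + \cP_\Sigma \otimes \cJ \subseteq \cP_\Sigma \otimes \cP_\Sigma$, namely the kernel of $\cP_\Sigma \otimes \cP_\Sigma \to \cP_\Sigma/\cI \otimes \cP_\Sigma/\cJ$. So the task reduces to showing that for any $h \in \cP_\Sigma$ and $f$ with $\Delta(f) \in K$, both $\Delta(f \cdot h)$ and $\Delta(f \ast_g h)$ remain in $K$. Writing
\[
\Delta(f) = \sum_i a_i \otimes b_i + \sum_j c_j \otimes d_j, \qquad a_i \in \cI,\; d_j \in \cJ,
\]
and $\Delta(h) = \sum_k h_k^{(1)} \otimes h_k^{(2)}$, the compatibility of $\Delta$ with $\cdot$ gives $\Delta(f \cdot h) = \sum (a_i \cdot h_k^{(1)}) \otimes (b_i \cdot h_k^{(2)}) + \sum (c_j \cdot h_k^{(1)}) \otimes (d_j \cdot h_k^{(2)})$, and since $\cI, \cJ$ are closed under $\cdot$ we have $a_i \cdot h_k^{(1)} \in \cI$ and $d_j \cdot h_k^{(2)} \in \cJ$, so each summand lies in $K$. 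The analogous expansion using the compatibility with $\ast_g$ yields $\Delta(f \ast_g h) = \sum (a_i \ast_g h_k^{(1)}) \otimes (b_i \ast_g h_k^{(2)}) + \sum (c_j \ast_g h_k^{(1)}) \otimes (d_j \ast_g h_k^{(2)})$, and closure of $\cI$ and $\cJ$ under $\ast_g$ (which is part of the hypothesis that they are di-ideals) again places each summand in $K$.

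There is no real obstacle here; the proof is a purely formal consequence of the bialgebra-style compatibilities recorded in Lemma \ref{comultproduct} and Proposition \ref{bigradedisomorphism} together with the definition of a di-ideal. The only minor subtlety is making sure the compatibility $\Delta(f \ast_g h) = \Delta(f) \ast_g \Delta(h)$, proved in Lemma \ref{comultproduct} for $\cP^\Sigma$, is correctly imported into $\cP_\Sigma$ via the isomorphism $\cG$, and that the componentwise extension of $\ast_g$ to $\cP_\Sigma \otimes \cP_\Sigma$ agrees under $\cG \otimes \cG$ with its counterpart in $\cP^\Sigma \otimes \cP^\Sigma$; but both follow immediately from the defining formula $\cG(f \ast_g h) = \cG(f) \ast_g \cG(h)$.
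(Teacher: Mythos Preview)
Your proof is correct and follows essentially the same route as the paper: both arguments use the compatibility $\Delta(\bx \ast_g \bv) = \Delta(\bx) \ast_g \Delta(\bv)$ from Lemma~\ref{comultproduct} together with the kernel description of the join. The only organizational difference is that the paper works directly in $\cP^\Sigma$ (using the identity $\cG(\cI \star \cJ) = \cG(\cI) \star \cG(\cJ)$ noted just before the proposition, so no transfer of Lemma~\ref{comultproduct} back to $\cP_\Sigma$ is needed), whereas you first import the compatibility to $\cP_\Sigma$ via $\cG$ and then invoke Proposition~\ref{invariantideal}; the content is the same.
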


\begin{proof}
Pick $\bv \in \cG(\cI \star \cJ)$. By definition, $\bv$ is in the kernel of the map
\[
\Delta \colon \cP^\Sigma \to \cP^\Sigma/\cG(\cI) \otimes \cP^\Sigma/ \cG(\cJ).
\]
Since both $\cG(\cI)$ and $\cG(\cJ)$ are ideals under $\ast_g$ via Proposition \ref{invariantideal}, it gives a well-defined multiplication on $\cP^\Sigma/\cG(\cI) \otimes \cP^\Sigma/ \cG(\cJ)$. By Lemma \ref{comultproduct}, given $\bx \in \cP^\Sigma$, we have $\Delta(\bx \ast_g \bv) = \Delta(\bx) \ast_g \Delta(\bv)$. But $\Delta(\bv) = 0$, so $\bx \ast_g \bv \in \cG(\cI \star \cJ)$.
\end{proof}

Let $\bV$ be a vector space. A subscheme $X \subseteq \bV$ is {\bf conical} if its defining ideal $I_X$ is homogeneous. The {\bf $r$th secant scheme} of $X$ is the subscheme of $\bV$ defined by the ideal $I_X^{\star r}$. We wish to consider secant varieties of Pl\"ucker embeddings, we make the following definition,

\begin{definition}
For any fixed $r \geq 0$ and $M \geq 2$, let $\cS_{M}(r) = (\cS_{M})^{\star r}$. Where $\cS_M$ is defined in Definition \ref{SumDef}.
\end{definition}

An immediate corollary of what we have just shown

\begin{corollary} \label{rthsecantideal}
$\cS_M(r)$ is a di-ideal for any $r \geq 0$ and $M \geq 2$.
\end{corollary}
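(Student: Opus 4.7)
The plan is to prove this by induction on $r$, since the definition $\cS_M(r) = (\cS_M)^{\star r} = \cS_M \star \cS_M(r-1)$ is itself recursive. The work has essentially already been done: Theorem \ref{diideal} says $\cS_M$ is a di-ideal, and Proposition \ref{secantideal} says the join of two di-ideals is a di-ideal, so iterating should immediately yield the result.

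More concretely, for the base case $r = 0$, we have $\cS_M(0) = \cS_M$ by the convention $I^{\star 0} = I$, and Theorem \ref{diideal} gives that $\cS_M$ is a di-ideal. For the inductive step, assume $\cS_M(r-1)$ is a di-ideal. Then $\cS_M(r) = \cS_M \star \cS_M(r-1)$ is the join of two di-ideals, and Proposition \ref{secantideal} immediately gives that this join is again a di-ideal. This completes the induction.

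There is no real obstacle here, so the proof should be short. The only thing worth double-checking is that the recursive definition $I^{\star r} = I \star I^{\star(r-1)}$ from the beginning of Section \ref{Joins and Secants} matches the intended meaning of $\cS_M(r)$; since $\star$ is associative and commutative (as remarked after its definition), any reasonable iterated version agrees, so this is not a concern. The substance of the corollary lies entirely in the two results it combines, and the induction is purely formal.
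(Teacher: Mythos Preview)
Your proof is correct and matches the paper's approach exactly: the paper simply says the result follows immediately from combining Proposition \ref{secantideal} and Theorem \ref{diideal}, and your induction is the explicit unwinding of that combination.
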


\begin{proof}
This follows immediately from combining Proposition \ref{secantideal} and Theorem \ref{diideal}.
\end{proof}

\section{Proof of Theorem \ref{TheoremA}} \label{5}
%

Before we can get to the main result, we need the following lemma

\begin{lemma} \label{Mbound}
For any fixed $r \geq 0$, to bound the degrees of the ideal generators of $\cS_{M}(r)$ for any $M$, it suffices to consider $M = (r+2)$.
\end{lemma}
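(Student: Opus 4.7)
The plan is to invoke the reduction \cite[Proposition 5.7]{MM}, which is the standard tool that tethers the ideals of secant varieties of $\Gr(d,n)$ for arbitrary $n$ to the single ``balanced'' case $n=(r+2)d$. That proposition furnishes a degree-preserving passage between generating sets in both directions: the prime ideal of $\Sec_r(\rho(\Gr(d,n)))$ for $n\geq (r+2)d$ is the $\GL_n$-saturation of the pullbacks of generators of $\Sec_r(\rho(\Gr(d,(r+2)d)))$ along coordinate projections $\bk^n\twoheadrightarrow \bk^{(r+2)d}$, while for $n\leq (r+2)d$ the ideal is the image of the one for $\Gr(d,(r+2)d)$ under the restriction map induced by a coordinate inclusion $\bk^n\hookrightarrow \bk^{(r+2)d}$.

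First I would set $n=Md$ and split into the two cases $M\geq r+2$ and $2\leq M<r+2$. In the first case, the pullback of a degree-$\leq C(r)$ generator of $\cS_{r+2}(r)$ in bidegree $(d,\ast)$ along any projection $\bk^{Md}\twoheadrightarrow \bk^{(r+2)d}$ lies in $\Sym(\bigwedge^d\bk^{Md})^\ast$ in the same symmetric degree, and after taking $\GL_{Md}$-translates yields a generating set of the corresponding bigraded piece of $\cS_M(r)$ by \cite[Proposition 5.7]{MM}. In the second case, I would apply the restriction map $\Sym(\bigwedge^d\bk^{(r+2)d})^\ast\twoheadrightarrow \Sym(\bigwedge^d\bk^{Md})^\ast$ sending $x_I\mapsto x_I$ for $I\subseteq[Md]$ and $x_I\mapsto 0$ otherwise; images of degree-$\leq C(r)$ generators again have degree $\leq C(r)$, and \cite[Proposition 5.7]{MM} certifies that they generate the relevant bigraded piece of $\cS_M(r)$.

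Assembling the two cases and summing over $d$ inside $(\cP_\Sigma)_M$ gives the lemma: any uniform degree bound $C(r)$ for the generators of $\cS_{r+2}(r)$ automatically bounds the degrees of generators of $\cS_M(r)$ for every $M\geq 2$. The main obstacle I anticipate is purely bookkeeping: translating \cite[Proposition 5.7]{MM}, which is stated for a single $\Gr(d,n)$, into the language of the bigraded object $\cS_M(r)=\bigoplus_{d,n}(\cS_M(r))_{d,n}$. The bound remains uniform in $d$ because the passage between $\Gr(d,Md)$ and $\Gr(d,(r+2)d)$ is carried out one $d$ at a time and never mixes distinct values of $d$, so no further argument is required to preserve the independence of the bound from $d$.
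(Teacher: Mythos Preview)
Your proposal is correct and follows essentially the same route as the paper: both invoke \cite[Proposition 5.7]{MM} to pass from $\Gr(d,Md)$ with $M\geq r+2$ down to $M=r+2$, and both handle $M<r+2$ by a restriction argument (the paper attributes the latter to the \emph{proof} of \cite[Proposition 5.7]{MM} rather than its statement, whereas you cite the proposition directly, but the content is the same). Your write-up is simply more explicit about the mechanics of the pullback/restriction maps than the paper's one-line invocation.
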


\begin{proof}
This follows immediately from \cite[Proposition 5.7]{MM}.  Rephrasing, in our case for a given Grassmannian $\Gr(d,V)$, the number of nonzero rows in $\lambda$ with $\bS_\lambda(V) = \bigwedge^d V$ is $d$. So from the quoted result, to bound the degrees of the ideal generators of the $r$th secant variety of a given Grassmannian it suffices to consider a vector space of dimension $(r+2)d$. It is then clear that to bound the degrees of the ideal generators of $\cS_{M}(r)$ it suffices to consider $\cS_{r+2}(r)$ in $\cP_{r+2}$.
\end{proof}

\begin{remark}
The result,  \cite[Proposition 5.7]{MM}, specifically says that if the degrees of the ideal generators of the $r$th secant variety of a given Grassmannian for a vector space of dimension $(r+2)d$ are bounded by $C$, then the same is true for the degrees of the ideal generators of the $r$th secant variety of a given Grassmannian for a vector space of higher dimension. One might be concerned that we are not considering vector spaces of dimension less than $(r+2)d$. However if we work with any lower dimensional vector space say of dimension $c < (r+2)d$, the ideal of the $r$th secant variety of $\Gr(d,c)$ is contained in the ideal of the $r$th secant variety of $\Gr(d,(r+2)d)$ by the exact argument seen in the proof of  \cite[Proposition 5.7]{MM}. So for any given $d$, we can fix the dimension of the vector spaces as $(r+2)d$ to bound the ideal generators of the $r$th secant variety of $\Gr(d,n)$ for any $n$.
\end{remark}
%

\noindent{\em Proof of Theorem \ref{TheoremA} and Corollary \ref{CorollaryA}.} From Lemma \ref{Mbound} it suffices to consider $\Gr(d,(r+2)d)$. The ideal of ${\rm Sec}_r(\rho(\Gr(d,(r+2)d)))$ is contained in $\cS_{r+2}(r)$, where again $\rho$ is the Pl\"ucker embedding. In particular, $(\cS_{r+2}(r))_{d,n}$ is precisely the space of degree $n$ polynomials in the ideal of ${\rm Sec}_r(\rho(\Gr(d,(r+2)d)))$. 

Corollary \ref{rthsecantideal} implies $\cS_{r+2}(r)$ is a di-ideal. By Proposition \ref{invfingen}, $\cG(\cS_{r+2}(r))$ is generated by finitely many elements $f_1,\dots,f_N$ under $\cdot$ and $\ast_g$. Every element of $\cG(\cS_{r+2}(r))$ can be written as a linear combination of elements of the form $h \cdot (f_i \ast_g a)$ and so for fixed $d$ a set of generators for  ${\rm Sec}_r(\rho(\Gr(d,(r+2)d)))$ can be taken to be the set of all $\cG^{-1}(f_i \ast_g a)$ such that $f_i \ast_g a \in \cP_{d,n}^\Sigma$ for some $n$. The degree of $f_i \ast_g a$ is the same as that of $f_i$ (if $f \in \cP^\Sigma_{d,n}$ then its degree is $n$). So we can take $C(r) = \max(\deg(f_1),\dots,\deg(f_N))$.

The proof of Corollary \ref{CorollaryA} follows immediately from the above. \qed
%
%

\begin{remark}
In the case where $r=0$, this theorem tells us that the homogeneous ideal of the Pl\"ucker image of all Grassmannians can be generated by finitely many polynomials of a finite degree bounded by $C(0)$ under the operations $\ast_g$ and $\cdot$. In this case we know $C(0) = 2$ and Example \ref{pluckerfingen} shows that all Pl\"ucker equations can be obtained from the Klein quadric.
\end{remark}

\section{The Pl\"ucker Category} \label{6}
In this section, we will translate the above work into the language of functor categories in the spirit of \cite{Sa2}. Once we transition to this language, we can study free resolutions of secant ideals of Pl\"ucker embedded Grassmannians. In particular, we will show that the $i$th syzygy module of the coordinate ring of the $r$th secant variety of the Pl\"ucker embedded $\Gr(d,n)$ (whose space of generators is the $i$th Tor group with the residue field) is generated in bounded degree with bound independent of $d$ and $n$. The case $i = 1$ corresponds to the above results.

Let $\bk$ be a commutative ring and fix $M \geq 0$. Recall that $\cP_{d,n} = \Sym^n(\bigwedge^{d} \bk^{Md})$. We will now encode the morphisms from $\cP_{d,m}$ to $\cP_{e,n}$ as the space of morphisms from an objects $(d,m)$ to another object $(e,n)$ in the abstract category $\bk\cG_M$. The operations $\cdot$ and $\ast_g$ tell us how to do this when $d = e$ or $m= n$ respectively. More explicitly, when $d = e$ an operation $\cP_{d,m}$ to $\cP_{d,n}$ is given by a partition $\sigma$ of $[n]$ and an element of $\cP_{d,n-m}$. A basis for these operations can be encoded by an order preserving injection $[m] \to [n]$ together with a list of monomials. 

When $m = n$, an operation $\cP_{d,n}$ to $\cP_{e,n}$ consists of a choice of an element of $\cP_{e-d,n}$ as well as a map $g \in {\rm inc}(\bN)$ with $g([d]) \subset [e]$. It has a basis given by the monomials, which are represented by an ordered list of $n$ monomials in $\bigwedge^{e-d} \bk^{M(e-d)}$. Once again, we prefer to represent these lists of monomials by reading lists, denote the poset of readings lists by ${\bf RL}$, and the poset of readings lists with $n$ entries of size $e$ by ${\bf RL}_{n,e}$. Explicitly, if $S \in {\bf RL}_{n,e}$, then $S = (S^1,\dots,S^n)$ with $|S^i| = e$. Where the readings lists are defined above in \S2, below Corollary \ref{basis}. In particular, each $S^i$ consists of distinct numbers selected from $[Me]$.

When $d \not= e$ and $m \not= n$, it is harder to describe a basis for the space of operations. Given a map $\alpha \colon [d] \to [e]$, suppose $[e] \setminus \alpha([d]) = \{a_1,\dots,a_{e-d}\}$, define $\alpha^c \colon [e-d] \to [e]$ as $\alpha^c(i) = a_i$. We call this the {\bf complement} of $\alpha$.

\begin{definition}
Define the {\bf Pl\"ucker category} $\cG_M$ as follows. The objects of $\cG_M$ are pairs $(d,m) \in \bZ^2_{\geq 0}$ and a morphism $\alpha: (d,m) \to (e,n)$ consists of the following data:
\begin{itemize}
\item An order-preserving injection $\alpha_1: [m] \to [n]$.
\item A function $\alpha_2: [n] \setminus \alpha_1([m]) \to \RL_{1,e}$
\item A function $\alpha_3: [m] \to \RL_{1,e-d}$
\item An order-preserving injection $\alpha_4 \colon [d] \to [e]$.
\end{itemize}
In particular, $\hom_{\cG_M}((d,m),(e,n)) = \emptyset$ if $d > e$. Given another morphism $\beta \colon (e,n) \to (f,p)$, the composition $\beta \circ \alpha = \gamma \colon (d,m) \to (f,p)$ is defined by
\begin{itemize}
\item $\gamma_1 = \beta_1 \circ \alpha_1$.
\item $\gamma_2 \colon [p] \setminus \gamma_1([m]) \to \RL_{1,f}$ is defined by:
\begin{itemize}
\item if $i \in [p] \setminus \beta_1([n])$, then $\gamma_2(i) = \beta_2(i)$, and
\item if $i \in \beta_1([n] \setminus \alpha_1([m]))$, then $\gamma_2(i) = \beta_4(\alpha_2(i')) + \beta_4^c(\beta_3(i'))$ where $i'$ is the unique preimage of $i$ under $\beta_1$.
\end{itemize}
\item $\gamma_3 \colon [m] \to \RL_{1,f-d}$ is defined by $\gamma_3(i) = (\gamma_4^c)^{-1}(\beta_4\alpha_4^c\alpha_3(i) + \beta_4^c\beta_3(\alpha_1(i)))$.
\item $\gamma_4 = \beta_4 \circ \alpha_4$.
\end{itemize}
\end{definition}
When $d=e$, the functions $\alpha_3$ and $\alpha_4$ are superfluous and the pair $(\alpha_1,\alpha_2)$ encodes an operation as discussed above. Similarly, when $n = m$, the functions $\alpha_1$ and $\alpha_2$ are superfluous, and $\alpha_3$ also encodes an operation as discussed above. 

\begin{remark} \label{remark1}
Each of these maps $\alpha_2$ and $\alpha_3$ can be represented by reading lists in $\RL_{n-m,e}$ and $\RL_{m,e-d}$ respectively. To explicitly see this, $\alpha_3$ can be represented as $S_{m,e-d}$ with $S^i$ exactly the image of $i$. We do not take this perspective for ease of composition in the above definition. However, taking the reading list perspective will be important in Proposition \ref{Ggrobner}.
\end{remark}

\begin{lemma}
Composition as defined above is associative.
\end{lemma}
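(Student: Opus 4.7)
The plan is to verify associativity componentwise. A morphism $\alpha \colon (d,m) \to (e,n)$ has four pieces of data $(\alpha_1,\alpha_2,\alpha_3,\alpha_4)$, and the binary composition rule specifies each piece of $\beta \circ \alpha$. Given three composable morphisms $\alpha \colon (d,m) \to (e,n)$, $\beta \colon (e,n) \to (f,p)$, and $\gamma \colon (f,p) \to (g,q)$, I will show that each of the four components of $(\gamma \circ \beta) \circ \alpha$ agrees with the corresponding component of $\gamma \circ (\beta \circ \alpha)$.

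For the first and fourth components the claim is immediate: both bracketings return $\gamma_1 \circ \beta_1 \circ \alpha_1$ and $\gamma_4 \circ \beta_4 \circ \alpha_4$ respectively, and ordinary composition of set maps is associative.

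For the second component, the domain $[q] \setminus (\gamma_1\beta_1\alpha_1)([m])$ breaks naturally into a disjoint union of three pieces indexed by which of the three morphisms first introduced the given element: the elements of $[q] \setminus \gamma_1([p])$, the elements of $\gamma_1([p] \setminus \beta_1([n]))$, and the elements of $\gamma_1\beta_1([n] \setminus \alpha_1([m]))$. On each of these three pieces I will unwind the binary rule for the second component twice, once for each bracketing, and verify that both expressions collapse to the same canonical formula: on the first piece one obtains $\gamma_2$; on the second piece a combination of $\beta_2$ pushed forward via $\gamma_4, \gamma_4^c$ and the $\gamma_3$ data; and on the third piece the analogous combination built from $\alpha_2$, $\beta_3\alpha_1$, and $\gamma_3\beta_1\alpha_1$, pushed through the $\beta$-complement and $\gamma$-complement maps. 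This is a direct substitution once the decomposition of the domain is in place.

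The third component carries the only nontrivial content. Both bracketings express $(\gamma\beta\alpha)_3(i) \in \RL_{1,g-d}$ as the pullback along $(\gamma_4\beta_4\alpha_4)^c$ of the superposition of three contributions built from $\alpha_3(i)$, $\beta_3(\alpha_1(i))$, and $\gamma_3(\beta_1\alpha_1(i))$, each inflated into $[g]$ by an appropriate iterated complement map. The main obstacle, and the step that has to be isolated as a preliminary combinatorial lemma, is the compatibility of complements for a composable pair of order-preserving injections $\alpha_4 \colon [d] \to [e]$, $\beta_4 \colon [e] \to [f]$: namely, the disjoint union identification
\[
[f] \setminus \beta_4\alpha_4([d]) \;=\; \beta_4\bigl([e] \setminus \alpha_4([d])\bigr) \;\sqcup\; \bigl([f] \setminus \beta_4([e])\bigr),
\]
under which $(\beta_4\alpha_4)^c$ corresponds to $\beta_4\alpha_4^c$ on the first summand and to $\beta_4^c$ on the second, after the unique order-preserving relabeling of each side. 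Granting this, both bracketings of the triple composition produce, after cancelling the outer pullback by $(\gamma_4\beta_4\alpha_4)^c$, the same three-term sum located in the same three regions of $[g]$, which gives the desired equality of $\delta_3$ and hence of the full composition.
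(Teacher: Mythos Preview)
Your proposal is correct and follows essentially the same approach as the paper: a componentwise verification, with the first and fourth components immediate, the second component handled by the same three-piece decomposition of $[q] \setminus \gamma_1\beta_1\alpha_1([m])$, and the third component reduced to the same three-term expansion $\gamma_4\beta_4\alpha_4^c\alpha_3(i) + \gamma_4\beta_4^c\beta_3\alpha_1(i) + \gamma_4^c\gamma_3\beta_1\alpha_1(i)$. The only difference is cosmetic: you isolate the complement compatibility $(\beta_4\alpha_4)^c \leftrightarrow (\beta_4\alpha_4^c,\beta_4^c)$ as a named preliminary lemma, whereas the paper leaves this implicit and simply cancels $(\beta_4\alpha_4)^c$ against $((\beta_4\alpha_4)^c)^{-1}$ in the direct substitution.
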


\begin{proof}
Suppose we are give three morphisms
\[
(d,m) \xrightarrow{\alpha} (e,n) \xrightarrow{\beta} (f,p) \xrightarrow{\gamma} (g,q).
\]
We will verify that all three components of both ways of interpreting $\gamma\beta\alpha$ are the same.
\begin{itemize}
\item The associativity of the first and fourth maps follows by the associativity of function composition.
\item Consider $[q] \setminus(\gamma_1\beta_1\alpha_1([m])) \to \RL_{1,g}$.
\begin{itemize}
\item If $i \in [q] \setminus \gamma_1([p])$ then $i \mapsto \gamma_2(i)$ under both compositions.
\item If $i \in \gamma_1([p] \setminus \beta_1([n]))$, let $i'$ be the unique premiage of $i$ under $\gamma_1$ and let $i''$ be the unique preimage of $i'$ under $\beta_1$.\\
Under $\gamma(\beta\alpha)$, we have $i \mapsto \gamma_4((\beta\alpha)_2(i')) + \gamma_4^c((\gamma)_3(i')) = \gamma_4\beta_2(i') + \gamma_4^c\gamma_3(i')$\\
Under $(\gamma\beta)\alpha$, we have $i \mapsto (\gamma\beta)_2(i') = \gamma_4\beta_2(i') + \gamma_4^c\gamma_3(i')$. This is because $i \in [q] \setminus \gamma_1\beta_1([n])$ by assumption. 
\item If $i \in \gamma_1\beta_1([n] \setminus \alpha_1([m]))$, let $i'$ be the unique preimage of $i$ under $\gamma_1$ and let $i''$ be the unique preimage of $i'$ under $\beta_1$.\\
Under $\gamma(\beta\alpha)$, we have $i \mapsto \gamma_4((\beta\alpha)_2(i')) + \gamma_4^c\gamma_3(i') = \gamma_4\beta_4\alpha_2(i'') +\alpha_4\beta_4^c\beta_3(i'') + \gamma_4^c \gamma_3(i')$.\\
Under $(\gamma\beta)\alpha$, we have $i \mapsto (\gamma\beta)_4(\alpha_2(i'')) + (\gamma\beta)_4^c ((\gamma\beta)_3(i'')) = \gamma_4\beta_4\alpha_2(i'') + \alpha_4\beta_4^c\beta_3(i'') + \gamma_4^c\gamma_3(\beta_1(i''))$.\\
But $\beta_1(i'') = i'$, so these are equal.
\end{itemize}
\item Now consider the map $[m] \to \RL_{1,g-d}$.
\begin{itemize}
\item Under $\gamma(\beta\alpha)$, we have $i \mapsto ((\gamma_4 \beta_4 \alpha_4)^c)^{-1}(\gamma_4(\beta_4 \alpha_4)^c (\beta\alpha)_3(i) + \gamma_4^c \gamma_3(\beta\alpha)_1(i)) = ((\gamma_4 \beta_4 \alpha_4)^c)^{-1} \\ (\gamma_4\beta_4\alpha_4^c\alpha_3(i) + \gamma_4\beta_4^c\beta_3\alpha_1(i)+\gamma_4^c\gamma_3\beta_1\alpha_1(i))$.
\item Under $(\gamma\beta)\alpha$, we have $i \mapsto ((\gamma_4 \beta_4 \alpha_4)^c)^{-1}((\gamma\beta)_4 \alpha_4^c \alpha_3(i) + (\gamma_4\beta_4)^c(\gamma\beta)_3(\alpha_1(i))) =((\gamma_4 \beta_4 \alpha_4)^c)^{-1} \\(\gamma_4\beta_4^c\beta_3\alpha_1(i)+\gamma_4^c\gamma_3\beta_1\alpha_1(i) + \gamma_4\beta_4\alpha_4^c\alpha_3(i))$.
\end{itemize}
\end{itemize}
We have equality in each case. As these are all the ways the components interact, this implies associativity.
\end{proof}

Let $\bk \cG_M$ be the $\bk$-linearization of $\cG_M$, i.e., $\hom_{\bk \cG_M}(x,y) = \bk[\hom_{\cG_M}(x,y)]$. A {\bf $\bk\cG_M$-module} is a functor form $\cG_M$ to the category of $\bk$-modules. Equivalently, a $\bk\cG_M$-modules is a $\bk$-linear functor from $\bk\cG_M$ to the category of $\bk$-modules. Morphisms of $\bk\cG_M$-modules are natural transformations, and $\bk\cG_M$-modules form an abelian category where submodules, kernels, cokernels, etc. are computed component pointwise.

Given $(d,m) \in \bZ^2_{\geq 0}$, define a $\bk \cG_M$-module $P_{d,m}$ by
\[
P_{d,m}(e,n) = \bk[\hom_{\cG_M}((d,m),(e,n))].
\]
This is the principal projective $\bk\cG_M$-module generated in bidegree $(d,m)$, and they give a set of projective generators for the category of $\bk\cG_M$-modules. That is, every $\bk\cG_M$-modules is a quotient of a direct sum of principal projectives. For further exposition on principal projectives we refer the reader to \cite[\S3.1]{SS1}. Then $P_{d,m}(e,n)$ is the space of operations from $\cP_{d,m}$ to $\cP_{e,n}$ which we discussed at the beginning of the section, so $P_{d,m}$ is a $\cP$-module freely generated in bidegree $(d,m)$.

To emphasize the category we may sometimes write $P^{\cG_M}_{d,m}$. With these definition we can now make sense of what it means for modules to be finitely generated. A $\bk\cG_M$-modules $N$ is {\bf finitely generated} if there is a surjection
\[
\bigoplus_{i=1}^g P_{d_i,m_i} \to N \to 0,
\]
with $g$ finite. A $\bk\cG_M$-modules is {\bf noetherian} if all of its submodules are finitely generated. For a definition of a Gr\"obner category, see \cite[Definition 4.3.1]{SS1}. We only need this definition for the next result and it is lengthy, so we choose to omit it so as not to distract.

\begin{proposition} \label{Ggrobner}
$\cG_M$ is a Gr\"obner category. In particular, if $\bk$ is noetherian, then every finitely generated $\bk \cG_M$ module is noetherian.
\end{proposition}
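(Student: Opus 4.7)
The plan is to verify the two axioms (G1) and (G2) of a Gröbner category from \cite[Definition 4.3.1]{SS1} and then invoke the general machinery of that paper. Fix an object $x=(d,m)$ of $\cG_M$, and let $|\cG_M|_x$ denote the set of morphisms out of $x$ modulo automorphisms of the target, partially ordered by $\alpha\leq\alpha'$ iff $\alpha'=\beta\circ\alpha$ for some $\beta$. Axiom (G1) asks that $|\cG_M|_x$ be noetherian as a poset; axiom (G2) asks for an admissible well-ordering on $|\cG_M|_x$ compatible with left-composition by morphisms of $\cG_M$. Once both hold, \cite[Theorem 4.3.2]{SS1} yields noetherianity of every finitely generated $\bk\cG_M$-module when $\bk$ is noetherian.

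First I would encode a morphism $\alpha\colon(d,m)\to(e,n)$ as a decorated reading list in the sense of Section \ref{2}. Explicitly, the monomial data from $\alpha_2$ and $\alpha_3$, reassembled via the order-preserving injections $\alpha_1$ and $\alpha_4$, produces a single RL $T_{e,n}=(T^1,\dots,T^n)$; the auxiliary combinatorial data $(\alpha_1,\alpha_4)$ furnishes flags on $T$ recording, for each tensor position $i\in[n]$, whether $i\in\alpha_1([m])$ (and if so, its preimage in $[m]$), and for each entry of each $T^i$, whether it lies in $\alpha_4([d])$ (and if so, its preimage in $[d]$). Because $d$ and $m$ are fixed, the flag alphabet is finite. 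The composition rules defining $\gamma_2$ and $\gamma_3$ are designed precisely so that the relation $\alpha\leq\alpha'$ in $|\cG_M|_x$ becomes equivalent to the existence of a flag-preserving map of decorated RLs from the encoding of $\alpha$ to the encoding of $\alpha'$ of the kind in Lemma \ref{mapequiv}. Noetherianity of this decorated poset then follows from Theorem \ref{RLNoeth} combined with a direct adaptation of the tree-embedding argument in the proof of Lemma \ref{nnotbounded}: the extra labels take values in a quasi-well-ordered finite set, so Dickson's lemma and Kruskal's theorem still apply. This verifies (G1). For (G2), I would extend the total monomial order $\preceq$ from Section \ref{2} to decorated RLs by breaking ties lexicographically on the flag data; compatibility with left-composition reduces to Lemma \ref{order}, since composition in $\cG_M$ acts on the underlying monomials through the products $\ast_g$ and $\cdot_\sigma$.

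The main technical obstacle lies in the bookkeeping for composition: one must check that the formulas $\gamma_2(i)=\beta_4(\alpha_2(i'))+\beta_4^c(\beta_3(i'))$ and $\gamma_3(i)=(\gamma_4^c)^{-1}(\beta_4\alpha_4^c\alpha_3(i)+\beta_4^c\beta_3(\alpha_1(i)))$ translate faithfully into the decorated-RL order and into left-composition in the admissible order. Once this translation is established, both (G1) and (G2) follow essentially formally from the results of Section \ref{2}, and the final noetherianity statement for finitely generated $\bk\cG_M$-modules is an immediate application of the Gröbner-category formalism of \cite{SS1}.
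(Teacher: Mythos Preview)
Your overall strategy matches the paper's: invoke \cite[Theorem~4.3.2]{SS1}, encode morphisms out of a fixed $(d,m)$ in a noetherian poset built from the reading-list results of \S\ref{2}, and produce an admissible total order via lexicographic refinement. Where you diverge is in the encoding. You try to merge all four pieces of data $(\alpha_1,\alpha_2,\alpha_3,\alpha_4)$ into a \emph{single} decorated reading list $T_{e,n}$ and then re-run the Kruskal/tree argument with extra labels. The paper instead keeps the four pieces separate, embedding a morphism as
\[
w(\alpha)=(S_{\alpha_2},\,S_{\alpha_3},\,\mathrm{im}(\alpha_1),\,\mathrm{im}(\alpha_4))\in \RL\times\RL\times\bZ_{\ge 0}^m\times\bZ_{\ge 0}^d,
\]
and simply cites that a finite product of noetherian posets is noetherian (Dickson plus Theorem~\ref{RLNoeth}). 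This buys a lot: no adaptation of the tree argument is needed, and the verification that $\alpha\le\alpha'$ iff $w(\alpha)\le w(\alpha')$ is a direct unwinding of the composition formulas.

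Your merged encoding also has a definitional wrinkle you gloss over: for positions $i\in\alpha_1([m])$, the data $\alpha_3(i')$ is a word of length $e-d$ over $[M(e-d)]$, not a word of length $e$ over $[Me]$, so you cannot literally form $T^i\in\RL_{1,e}$ without specifying how to pad and re-index (and $\alpha_4$ is a map $[d]\to[e]$, not $[Md]\to[Me]$, so ``entries lying in $\alpha_4([d])$'' does not parse as stated). This is fixable, but once you sort it out you will find you have essentially reconstructed the product decomposition anyway. I would recommend switching to the paper's product encoding; it makes both (G1) and (G2) nearly immediate and removes the bookkeeping you flag as the main obstacle.
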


\begin{proof}
We will use \cite[Theorem 4.3.2]{SS1}. Fix $(d,m) \in \bZ_{\geq 0}^2$. Let
\[
\Sigma = \RL \times \RL \times  \bZ_{\geq 0}^m \times  \bZ_{\geq 0}^d
\]
By Dickson's Lemma and Theorem \ref{RLNoeth}, the finite product of noetherian posets is also noetherian with the componentwise order. Hence $\Sigma$ is noetherian.

From Remark \ref{remark1} we can associate to each $\alpha_2$ and $\alpha_3$ a RL, $S_{\alpha_i}$, for $i = 2,3$. Given a morphism $\alpha \colon (d,m) \to (e,n)$ encode it as $w(\alpha) \in \Sigma$
\[
w(\alpha) = (S_{\alpha_2},S_{\alpha_3},{\rm im}(\alpha_1),{\rm im}(\alpha_4)).
\]
We can recover $\alpha$ from $w(\alpha)$, so this is an injection. Define $\alpha \leq \gamma$ if there exists some $\beta$ such that $\gamma = \beta \circ \alpha$. Then, it follows from the definition of composition that the set of morphisms $\alpha \colon (d,m) \to (e,n)$ with $(d,m)$ fixed and $(e,n)$ varying is naturally a subposet of $\Sigma$, i.e. $\alpha \leq \alpha'$ if and only if $w(\alpha) \leq w(\alpha')$. Since noetherianity is inherited by subposets, we conclude that this partial order on morphisms with source $(d,m)$ is noetherian. 

It remains to prove that the set of morphisms with source $(d,m)$ is orderable, i.e., for each $(e,n)$ there exists a total ordering on the set of morphisms $(d,m) \to (e,n)$ so that for any $\beta \colon (e,n) \to (f,p)$, we have $\alpha < \alpha'$ implies that $\beta \alpha < \beta \alpha'$. To do this first put the lexicographic order on $\RL$. That is given two RLs, $S_{d,n} = (S^1,\dots,S^n)$ and $T_{e,m} = (T^1,\dots,T^m)$ we say $S_{d,n} \leq T_{e,m}$ if $n < m$ or if $n = m$ and $d < e$, if $m = n$ and $d = e$ we compare the lists in $(\bZ^d)^n$ using the natural lexicographic order described in $\S2$. In particular, we first compare $S^1$ and $T^1$ lexicographically, if they are equal we consider $S^2$ and $T^2$, etc. 

This defines a total order on $\RL$. Now put a lexicographic order on $\bZ_{\geq 0}^m$ and $\bZ_{\geq 0}^d$ in the natural way. Totally order $\Sigma$ by declaring all of the elements of the first $\RL$ to be larger than the second $\RL$ which is larger than $\bZ_{\geq 0}^m$ which is larger than $\bZ_{\geq 0}^d$. This is just another lexicographic order. This orders $\Sigma$, which in turn gives the desired ordering.
%
\end{proof}

This proves $\cG_M$ is Gr\"obner, in particular this also implies Theorem \ref{monfingen}.

\subsection{Symmetrized versions.}
In $\bk\cG_M$, the space of morphisms $(0,0) \to (d,m)$ is identified with the tensor power $(\bigwedge^d \bk^{Md})^{\otimes m}$. For our applications, we need symmetric powers, $\Sym^m(\bigwedge^d \bk^{Md})$, so we now define symmetrized versions of the Gr\"obner category $\bk\cG_M$.

\begin{definition}
Given $\alpha \colon (d,m) \to (e,n)$ and $\sigma \in \Sigma_n$, there is a unique $\tau \in \Sigma_m$ so that $\sigma\alpha_1\tau^{-1}$ is order-preserving; we refer to $\tau$ as the permutation induced by $\sigma$ with respect to $\alpha_1$. Define $\sigma(\alpha)$ by
\begin{itemize}
\item $\sigma(\alpha)_1 = \sigma\alpha_1\tau^{-1}$,
\item $\sigma(\alpha)_2 = \alpha_2 \sigma^{-1}$,
\item $\sigma(\alpha)_3 = \alpha_3\tau^{-1}$,
\item $\sigma(\alpha)_4 = \alpha_4$.
\end{itemize}
This defines an action of $\Sigma_n$ on $\hom_{\cG_M}((d,m),(e,n))$, and we set
\[
\hom_{\bk\cG_M^\Sigma}((d,m),(e,n)) = \bk[\hom_{\cG_M}((d,m),(e,n))]^{\Sigma_n}
\]
where the superscript denotes taking invariants.
\end{definition}

\begin{lemma} \label{symmetry}
Given $\alpha \colon (d,m) \to (e,n)$ and $\beta \colon (e,n) \to (f,p)$, and $\sigma \in \Sigma_p$, we have $\sigma(\beta \circ \alpha) = \sigma(\beta) \circ \tau(\alpha)$ where $\tau \in \Sigma_n$ is the permutation induced by $\sigma$ with respect to $\beta_1$. In particular, $\bk\cG_M^\Sigma$ is a $\bk$-linear subcategory of $\bk\cG_M$.
\end{lemma}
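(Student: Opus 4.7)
The plan is to verify the identity $\sigma(\beta\circ\alpha)=\sigma(\beta)\circ\tau(\alpha)$ componentwise using the definitions of composition and of the $\Sigma_n$-action, and then to use this identity to transfer invariance through composition. Let $\gamma=\beta\circ\alpha$, let $\tau\in\Sigma_n$ be induced by $\sigma$ with respect to $\beta_1$, and let $\rho\in\Sigma_m$ be induced by $\tau$ with respect to $\alpha_1$. The first key observation is that $\rho$ is also the permutation induced by $\sigma$ with respect to $\gamma_1=\beta_1\alpha_1$: indeed
\[
\sigma\gamma_1\rho^{-1}=(\sigma\beta_1\tau^{-1})(\tau\alpha_1\rho^{-1})
\]
is a composition of two order-preserving injections, hence order-preserving, and $\rho$ is the unique such permutation. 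So the first and fourth components of $\sigma(\gamma)$ match those of $\sigma(\beta)\circ\tau(\alpha)$ immediately, using the same factorization and the fact that $(\tau(\alpha))_4=\alpha_4$, $(\sigma(\beta))_4=\beta_4$.

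Next I would treat the third component. Writing out $\sigma(\gamma)_3(i)=\gamma_3(\rho^{-1}(i))$ and comparing with the formula for $(\sigma(\beta)\circ\tau(\alpha))_3(i)$ obtained from the composition rule, the factors of $\tau$ and $\tau^{-1}$ introduced by the action cancel (using $(\tau(\alpha))_3=\alpha_3\rho^{-1}$, $(\tau(\alpha))_1=\tau\alpha_1\rho^{-1}$, $(\sigma(\beta))_3=\beta_3\tau^{-1}$), and both sides reduce to the same expression involving $\gamma_4^c$, $\alpha_4^c$, $\alpha_3\rho^{-1}$, and $\beta_3\alpha_1\rho^{-1}$. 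This is the first step where the cancellation $\tau^{-1}\tau=\mathrm{id}$ plays a real role.

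The most involved step, and the place I expect the main obstacle, is the second component, because it requires a case analysis. For $i\in[p]\setminus\sigma\gamma_1\rho^{-1}([m])$ one splits according to whether $i\in[p]\setminus\sigma\beta_1([n])$ or $i\in\sigma\beta_1([n]\setminus\alpha_1([m]))$. Using $\tau^{-1}([n]\setminus\tau\alpha_1\rho^{-1}([m]))=[n]\setminus\alpha_1([m])$, one checks that these two subsets pull back under $\sigma$ to the two cases in the formula defining $\gamma_2$. In the first case both sides equal $\beta_2(\sigma^{-1}(i))$; in the second case one verifies that the preimage $(\sigma(\beta)_1)^{-1}(i)=\tau\beta_1^{-1}\sigma^{-1}(i)$ combined with $(\tau(\alpha))_2=\alpha_2\tau^{-1}$ and $(\sigma(\beta))_3=\beta_3\tau^{-1}$ collapses the $\tau$'s and yields exactly $\beta_4\alpha_2(\beta_1^{-1}\sigma^{-1}(i))+\beta_4^c\beta_3(\beta_1^{-1}\sigma^{-1}(i))=\gamma_2(\sigma^{-1}(i))=\sigma(\gamma)_2(i)$. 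The difficulty here is purely bookkeeping: tracking which index lives in which ambient set and keeping the $\tau$'s in the right positions.

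Once the identity is established, the ``in particular'' follows formally. Let $A=\sum_\alpha a_\alpha\alpha\in\hom_{\bk\cG_M}((d,m),(e,n))$ and $B=\sum_\beta b_\beta\beta\in\hom_{\bk\cG_M}((e,n),(f,p))$ be invariant, and fix $\sigma\in\Sigma_p$. For each $\beta$ let $\tau_{\sigma,\beta}\in\Sigma_n$ be induced by $\sigma$ with respect to $\beta_1$. Then
\[
\sigma\cdot(BA)=\sum_{\alpha,\beta}a_\alpha b_\beta\,\sigma(\beta)\circ\tau_{\sigma,\beta}(\alpha)=\sum_\beta b_\beta\Big(\sigma(\beta)\circ\sum_\alpha a_\alpha\,\tau_{\sigma,\beta}(\alpha)\Big).
\]
Since $\sum_\alpha a_\alpha\tau_{\sigma,\beta}(\alpha)=\tau_{\sigma,\beta}\cdot A=A$ by $\Sigma_n$-invariance, the inner sum collapses to $\sigma(\beta)\circ A$, and then $\sum_\beta b_\beta\sigma(\beta)=\sigma\cdot B=B$ gives $\sigma\cdot(BA)=BA$. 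Hence $BA$ is $\Sigma_p$-invariant, so $\bk\cG_M^\Sigma$ is closed under composition and is a $\bk$-linear subcategory of $\bk\cG_M$.
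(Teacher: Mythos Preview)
Your proposal is correct and follows essentially the same approach as the paper: a componentwise verification using the observation that $\rho$ is simultaneously the permutation induced by $\tau$ with respect to $\alpha_1$ and by $\sigma$ with respect to $\beta_1\alpha_1$, followed by the case analysis for the second component and the $\tau^{-1}\tau$ cancellation for the third. The only difference is that you spell out the ``in particular'' clause about closure of $\bk\cG_M^\Sigma$ under composition, which the paper leaves implicit; your argument there is also correct.
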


\begin{proof}
Let $\rho \in \Sigma_\ell$ be the permutation induced by $\tau$ with respect to $\alpha_1$. Then $(\sigma\beta_1\tau^{-1})(\tau\alpha_1\rho^{-1})$ is order preserving, so $\rho$ is also the permutation induced by $\sigma$ with respect to $\beta_1\alpha_1$. Hence $\sigma(\beta\alpha)_1 = \sigma(\beta)_1\tau(\alpha)_1$.

Next, we show that $\sigma(\beta\alpha)_2 = (\sigma(\beta)\tau(\alpha))_2$. If $i \in [p] \setminus \sigma(\beta)_1([n])$, then
\[
\sigma(\beta\alpha)_2(i) = (\beta\alpha_2)\sigma^{-1}(i) = \beta_2\sigma^{-1}(i) = \sigma(\beta)_2(i) = (\sigma(\beta)\tau(\alpha))_2(i).
\]
Else, $i \in \sigma(\beta)_1([n] \setminus \tau(\alpha)_1([m]))$, let $i'$ be the unique preimage of $i$ under $\sigma \beta_1\tau^{-1}$. Then $\tau^{-1}(i')$ is the unique preimage of $\sigma^{-1}(i)$ under $\beta_1$, and we have
\begin{align*}
\sigma(\beta\alpha)_2(i) &= (\beta\alpha)_2\sigma^{-1}(i) = \beta_4(\alpha_2(\tau^{-1}(i'))) + \beta_4^c(\beta_3(\tau^{-1}(i')))\\
&= \beta_4(\tau(\alpha)_2(i')) + \beta_4^c(\sigma(\beta)_3(i')) = (\sigma(\beta)\tau(\alpha))_2(i).
\end{align*}
Now, we show that $\sigma(\beta\alpha)_3 = (\sigma(\beta)\tau(\alpha))_3$. For $i \in [\ell]$, we have
\begin{align*}
(\sigma(\beta)\tau(\alpha))_3(i) &=((\sigma(\beta)_4\tau(\alpha)_4)^c)^{-1}(\sigma(\beta)_4\tau(\alpha)_4^c\tau(\alpha)_3(i) + \sigma(\beta)_4^c \sigma(\beta)_3\tau(\alpha)_1(i))\\
&=((\sigma(\beta)_4\tau(\alpha)_4)^c)^{-1}(\beta_4\alpha_4^c\alpha_3\rho^{-1}(i) + \beta_4^c\beta_3\tau^{-1}\tau\alpha_1\rho^{-1}(i))\\
&=((\sigma(\beta)_4\tau(\alpha)_4)^c)^{-1}(\beta_4\alpha_4^c\alpha_3 + \beta_4^c\beta_3\alpha_1)(\rho^{-1}(i))\\
&=(\beta\alpha)_3\rho^{-1}(i)\\
&= \sigma(\beta\alpha)_3.
\end{align*}
Finally, we show that $\sigma(\beta\alpha)_4 = (\sigma(\beta)\tau(\alpha))_4$. This is clear because $\sigma$ acts trivially on this map, so
\[
\sigma(\beta\alpha)_4 = \beta_4 \alpha_4 = \sigma(\beta)_4\tau(\alpha)_4 = (\sigma(\beta)\tau(\alpha))_4.
\]
\end{proof}

A $\bk\cG_M^\Sigma$-module is a $\bk$-linear functor from $\bk\cG_M^\Sigma$ to the category of $\bk$-modules. For each $(d,m)$, the principal projective $\bk\cG_M^\Sigma$-module is defined by 
\[
P_{d,m}^{\bk\cG_M^\Sigma}(e,n) = \hom_{\bk\cG_M^\Sigma}((d,m),(e,n)),
\]
and we say that a $\bk\cG_M^\Sigma$-module $N$ is finitely generated if there is a surjection
\[
\bigoplus_{i=1}^g P_{d_i,m_i}^{\bk\cG_M^\Sigma} \to N \to 0
\]
with $g$ finite.

\begin{proposition} \label{invnoeth}
If $\bk$ contains a field of characteristic $0$, then every finitely generated $\bk\cG_M^\Sigma$-module is noetherian.
\end{proposition}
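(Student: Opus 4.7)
The plan is to reduce to showing that each principal projective $P_{d,m}^{\bk\cG_M^\Sigma}$ is noetherian, from which the general case follows by taking quotients of finite direct sums (both operations preserve noetherianity). For this, I will leverage Proposition \ref{Ggrobner} together with an averaging argument in the spirit of Proposition \ref{invfingen}, which is available because $\bk \supseteq \bQ$.

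The key observation is that $\hom_{\bk\cG_M^\Sigma}((d,m),(e,n))$ sits inside $\hom_{\bk\cG_M}((d,m),(e,n))$ as the space of $\Sigma_n$-invariants, with a natural $\bk$-linear projection
\[
\pi \colon \bk[\hom_{\cG_M}((d,m),(e,n))] \to \bk[\hom_{\cG_M}((d,m),(e,n))]^{\Sigma_n}, \qquad \alpha \mapsto \tfrac{1}{n!}\sum_{\sigma \in \Sigma_n}\sigma(\alpha).
\]
Given any $\bk\cG_M^\Sigma$-submodule $I \subseteq P_{d,m}^{\bk\cG_M^\Sigma}$, I would form $\tilde I \subseteq P_{d,m}^{\bk\cG_M}$, the $\bk\cG_M$-submodule generated by $I$ viewed as a subset (forgetting invariance). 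By Proposition \ref{Ggrobner}, $P_{d,m}^{\bk\cG_M}$ is noetherian, so if I can show that the assignment $I \mapsto \tilde I$ is strictly monotone on submodules, an ascending chain $I_1 \subseteq I_2 \subseteq \cdots$ must stabilize.

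The heart of the argument—and the step requiring care—is the identity $\pi(\tilde I) = I$, which says that averaging an element of $\tilde I$ lands back in $I$. A general element of $\tilde I$ in bidegree $(e,n)$ has the form $f = \sum_i \beta_i \circ \alpha_i$ with $\alpha_i \in I$ of some bidegree $(e_i,n_i)$ (hence $\Sigma_{n_i}$-invariant) and $\beta_i \in \hom_{\cG_M}((e_i,n_i),(e,n))$. Applying $\sigma \in \Sigma_n$ and using Lemma \ref{symmetry}, we get $\sigma(\beta_i \circ \alpha_i) = \sigma(\beta_i) \circ \tau_i(\alpha_i)$, where $\tau_i$ is the permutation induced by $\sigma$ relative to $(\beta_i)_1$. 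Since $\alpha_i$ is $\Sigma_{n_i}$-invariant, $\tau_i(\alpha_i)=\alpha_i$, so
\[
\pi(f) \;=\; \sum_i \Bigl(\tfrac{1}{n!}\sum_{\sigma \in \Sigma_n} \sigma(\beta_i)\Bigr) \circ \alpha_i.
\]
The coefficient of $\alpha_i$ is manifestly $\Sigma_n$-invariant, so it lies in $\hom_{\bk\cG_M^\Sigma}((e_i,n_i),(e,n))$, and $\pi(f) \in I$ because $I$ is closed under $\bk\cG_M^\Sigma$-morphisms. Combined with the obvious inclusion $I \subseteq \pi(\tilde I)$ (since $I$ already consists of invariants), this yields $\pi(\tilde I) = I$.

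Finally, strict monotonicity is immediate: if $\tilde I_1 = \tilde I_2$ then $I_1 = \pi(\tilde I_1) = \pi(\tilde I_2) = I_2$. Thus any chain of $\bk\cG_M^\Sigma$-submodules of $P_{d,m}^{\bk\cG_M^\Sigma}$ is dominated by a chain of $\bk\cG_M$-submodules of $P_{d,m}^{\bk\cG_M}$, which stabilizes by Proposition \ref{Ggrobner}. The main obstacle is precisely the compatibility argument using Lemma \ref{symmetry}—one needs the permutation $\tau_i$ on the source to coincide with the $\Sigma_{n_i}$-action under which $\alpha_i$ is already invariant, and this is exactly what that lemma guarantees.
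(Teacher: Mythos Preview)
Your proposal is correct and follows essentially the same approach as the paper: both arguments reduce to principal projectives, embed a $\bk\cG_M^\Sigma$-submodule into the $\bk\cG_M$-submodule it generates, invoke Proposition \ref{Ggrobner}, and then use the averaging projection $\pi$ together with Lemma \ref{symmetry} (giving $\pi(\beta\alpha)=\pi(\beta)\alpha$ for $\alpha$ invariant) to descend back. The only cosmetic difference is that the paper extracts finite generators directly while you phrase it via the ascending chain condition, but the substantive content is identical.
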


\begin{proof}
Set $P_{d,m} = P_{d,m}^{\cG_M}$ and $Q_{d,m} = P_{d,m}^{\bk\cG_M^\Sigma}$; we have a natural inclusion $Q_{d,m}(e,n) \subseteq P_{d,m}(e,n)$ for all $(e,n)$. Given a $\bk\cG_M^\Sigma$-submodule $M$ of $Q_{d,m}$, let $N$ be the $\cG_M$-submodule of $P_{d,m}$ that it generates. Given a list of generators of $N$ coming from $M$, Proposition \ref{Ggrobner} shows $P_{d,m}$ is noetherian and so some finite subset $\gamma_1,\dots,\gamma_g$ of them already generates $N$. Let $\pi$ be the symmetrization map
\begin{align*}
\bk[\hom_{\cG_M}((d,m),(e,n))] &\to \bk[\hom_{\cG_M}((d,m),(e,n))]^{\Sigma_n}\\
\alpha &\mapsto \tfrac{1}{n!} \sum_{\sigma \in \Sigma_n} \sigma(\alpha).
\end{align*}
If $\alpha \in \bk[\hom_{\cG_M}((d,m),(e,n))]^{\Sigma_n}$, then $\pi(\alpha) = \alpha$; given $\beta \in \bk[\hom_{\cG_M}((e,n),(f,p))]$, then $\pi(\beta\alpha) = \pi(\beta)\alpha$ by Lemma \ref{symmetry}.

Given any element $\gamma$ of $M$, we have an expression $\gamma = \sum_i \delta_i\gamma_i$, where $\delta_i \in \bk\cG_M$. So, applying $\pi$, we get $\gamma = \pi(\gamma) = \sum_i \pi(\delta_i)\gamma_i$, so $\gamma_1,\dots,\gamma_g$ also generate $M$ as a $\bk\cG_M^\Sigma$-module. In particular, the principal projectives of $\bk\cG_M^\Sigma$ are noetherian, so the same is true for any finitely generated module.
\end{proof}

Now assume that $\bk$ contains a field of characteristic $0$. We define the {\bf symmetrized Pl\"ucker category} $\sG_M = (\bk\cG_M)_\Sigma$ as follows. First, set
\[
\hom_{\sG_M}((d,m),(e,n)) = \bk[\hom_{\cG_M}((d,m),(e,n))]_{\Sigma_n}
\]
where the subscript denotes coinvariants under $\Sigma_n$. As in \S3, we have an isomorphism
\begin{align*}
\bk[\hom_{\cG_M}((d,m),(e,n))]_{\Sigma_n} &\xrightarrow{\cong} \bk[\hom_{\cG_M}((d,m),(e,n))]^{\Sigma_n}\\
\alpha &\mapsto \tfrac{1}{n!} \sum_{\sigma \in \Sigma_n} \sigma(\alpha),
\end{align*}
and as above we use this to transfer the $\bk$-linear category structure from $\bk\cG_M^\Sigma$ to $\sG_M$. Now we notice that $\hom_{\sG_M}((0,0),(d,m))$ is identified with $\Sym^m(\bigwedge^d \bk^{(r+2)d})$, which was our goal. This isomorphism in combination with Proposition \ref{invnoeth} give us the following:

\begin{proposition} \label{coinvnoeth}
Suppose $\bk$ is a field of characteristic $0$. Every finitely generated $\sG_M$-module is noetherian.
\end{proposition}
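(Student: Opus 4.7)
The plan is to reduce Proposition \ref{coinvnoeth} to the already-established Proposition \ref{invnoeth} by recognizing $\sG_M$ and $\bk\cG_M^\Sigma$ as isomorphic $\bk$-linear categories, and then transporting noetherianity of modules along this isomorphism. The whole argument runs in parallel to the passage from $\cP_\Sigma$ to $\cP^\Sigma$ carried out in \S3, just enhanced to the level of categories.

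First I would make the averaging map
\[
\pi_{(d,m),(e,n)} \colon \hom_{\sG_M}((d,m),(e,n)) \to \hom_{\bk\cG_M^\Sigma}((d,m),(e,n)),\quad [\alpha] \mapsto \tfrac{1}{n!}\sum_{\sigma \in \Sigma_n}\sigma(\alpha),
\]
into a $\bk$-linear functor $\Pi\colon \sG_M \to \bk\cG_M^\Sigma$. Each $\pi_{(d,m),(e,n)}$ is well defined on coinvariants and is a bijection onto the invariants because $\bk$ has characteristic $0$, giving the standard identification $V_{\Sigma_n} \cong V^{\Sigma_n}$ for a finite group action on a $\bk$-vector space; its inverse is the restriction of the natural projection $\bk[\hom_{\cG_M}((d,m),(e,n))] \twoheadrightarrow \bk[\hom_{\cG_M}((d,m),(e,n))]_{\Sigma_n}$ to the invariant subspace. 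That these maps are compatible with composition is precisely the content of Lemma \ref{symmetry}: the identity $\sigma(\beta\circ\alpha) = \sigma(\beta)\circ \tau(\alpha)$ implies that averaging a composition equals composing the averaged morphisms, so $\Pi$ is a functor, and in fact an isomorphism of $\bk$-linear categories. (This is exactly the transfer of structure the text alludes to when it defines composition in $\sG_M$ via the identification with $\bk\cG_M^\Sigma$.)

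Once $\Pi$ is known to be an isomorphism of $\bk$-linear categories, pullback along $\Pi$ gives an equivalence of abelian module categories between $\sG_M$-modules and $\bk\cG_M^\Sigma$-modules. This equivalence preserves exact sequences, submodules, and, crucially, the principal projectives: $P^{\sG_M}_{d,m}$ is carried to $P^{\bk\cG_M^\Sigma}_{d,m}$, since both represent the functor $X \mapsto X(d,m)$. Consequently, a $\sG_M$-module $N$ is finitely generated if and only if $\Pi_\ast N$ is, and noetherianity of $N$ is equivalent to noetherianity of $\Pi_\ast N$. Applying Proposition \ref{invnoeth} to $\Pi_\ast N$ finishes the argument.

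The only step with any genuine content beyond unwinding definitions is the verification that the averaging maps respect composition, which is Lemma \ref{symmetry}; I do not anticipate a substantive obstacle, as everything else is formal transport along an equivalence. The characteristic-zero hypothesis is used in a single place, namely to invert $n!$ so that averaging is a bijection.
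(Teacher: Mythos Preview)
Your proposal is correct and follows exactly the paper's approach: the paper defines the composition in $\sG_M$ by transporting structure along the averaging isomorphism with $\bk\cG_M^\Sigma$, so the two categories are isomorphic by construction, and noetherianity is inherited directly from Proposition \ref{invnoeth}. The only remark is that you need not invoke Lemma \ref{symmetry} separately to check that averaging respects composition---since the paper \emph{defines} composition in $\sG_M$ via this transfer, the functoriality of $\Pi$ is tautological.
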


\begin{remark}
These definitions parallel the constructions in \S\ref{3}. In particular, we can identify $\cP^\Sigma$ and $\cP_\Sigma$ from this section with the principal projectives generated in degree $(0,0)$ in $\bk\cG_M^\Sigma$ and $\sG_M$ respectively. Furthermore, the notions of ideal and di-ideal translate to submodules in both cases. So Proposition \ref{invfingen} is a special case of Proposition \ref{invnoeth}.
\end{remark}

\section{Syzygies of Secant Ideals} \label{7}
In this section, $\bk$ is a field of characteristic $0$. For this section fix some $M \geq 0$. The principal projective $P_{0,0}$ in $\sG_M$ is the algebra $\cP_\Sigma$ from \S\ref{3} and each principal projective $P_{d,m}$ is a module over it. We use $\cP_\Sigma(-d,-m)$ to denote this module; by Proposition \ref{coinvnoeth} these are all noetherian modules.

In Definition \ref{SumDef}, we defined $\cS_M$ to be the sum of the Pl\"ucker ideals corresponding to $\Gr(d,Md)$ as $d \geq 0$ varies and $\cS_M(r) = (\cS_M)^{\star r}$. By Corollary \ref{rthsecantideal}, $\cS_{M}(r)$ is a $\sG_M$-submodule of $P_{0,0}$ for all $r$.

For $d$ fixed, $\bigoplus_m \cS_{M}(r)_{d,m}$ is an ideal in $\Sym(\bigwedge^d \bk^{Md})$. So we can define an alegbra
\[
\Sec_{d,r}(M) = \bigoplus_{m \geq 0} \Sym^{m}(\bigwedge^d \bk^{Md})/\cS_{M}(r)_{d,m}
\]
which is a quotient of $\Sym(\bigwedge^d \bk^{Md})$. Notice this is exactly the $r$th secant ideal of the Pl\"ucker-embedded $\Gr(d,Md)$. More generally, if $M$ is a $\sG_M$-module, then for $d$ fixed, $\bigoplus_m M_{d,m}$ is a $\Sym(\bigwedge^d \bk^{Md})$-module.

\begin{lemma} \label{free}
Fix $d,e,n$. Then
\[
\bigoplus_{m \geq 0} \cP_\Sigma(-e,-n)_{d,m}
\]
is a free $\Sym(\bigwedge^d \bk^{Md})$-module generated in degree $n$ whose rank is $\dim_k(\Sym^n(\bigwedge^{d-e} \bk^{M(d-e)}))$.
\end{lemma}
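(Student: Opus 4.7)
The plan is to decompose $\cP_\Sigma(-e,-n)_{d,m} = \hom_{\sG_M}((e,n),(d,m))$ explicitly by unpacking the coinvariant definition $\bk[\hom_{\cG_M}((e,n),(d,m))]_{\Sigma_m}$. A morphism in $\cG_M$ is a tuple $(\alpha_1,\alpha_2,\alpha_3,\alpha_4)$; choosing a canonical orbit representative with $\alpha_1$ the standard inclusion $[n]\hookrightarrow[m]$, the residual $\Sigma_m$-stabilizer becomes $\Sigma_n\times\Sigma_{m-n}$, where the first factor permutes the source positions (and hence $\alpha_3$ via the induced $\tau$ of the definition preceding Lemma \ref{symmetry}) and the second permutes the complement $[m]\setminus[n]$ (and hence $\alpha_2$).

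Next I would take coinvariants componentwise. The $\Sigma_{m-n}$-action on $\alpha_2\colon [m]\setminus[n]\to\RL_{1,d}$ produces an element of $\Sym^{m-n}(\bigwedge^d\bk^{Md})$, using the identification of $\RL_{1,d}$ with the standard monomial basis of $\bigwedge^d\bk^{Md}$. The $\Sigma_n$-action on $\alpha_3\colon [n]\to\RL_{1,d-e}$ produces an element of $\Sym^n(\bigwedge^{d-e}\bk^{M(d-e)})$, and $\alpha_4$ is $\Sigma_m$-invariant. This yields an identification of $\bk$-vector spaces
\[
\cP_\Sigma(-e,-n)_{d,m}\;\cong\;\Sym^{m-n}(\bigwedge^d\bk^{Md}) \otimes_\bk \Sym^n(\bigwedge^{d-e}\bk^{M(d-e)}).
\]

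Then I would verify this is an isomorphism of $\Sym(\bigwedge^d\bk^{Md})$-modules. The module action arises from composition in $\sG_M$ with morphisms of the form $(d,m)\to(d,m+m')$, which have trivial $\alpha_3$, trivial $\alpha_4$, and $\alpha_1$ a canonical extension; the composition formulas of $\cG_M$ show that in the composed morphism $\alpha_3$ and $\alpha_4$ are preserved while $\alpha_2$ is concatenated with the new labels. Applying Lemma \ref{symmetry} to commute composition with $\Sigma$-symmetrization, the action becomes multiplication in the $\Sym(\bigwedge^d\bk^{Md})$ tensor factor of the decomposition above. This exhibits the module as free on $\Sym^n(\bigwedge^{d-e}\bk^{M(d-e)})$, generated in degree $n$ (the bidegree where $m=n$ forces $\alpha_2$ to be trivial), of rank $\dim_\bk\Sym^n(\bigwedge^{d-e}\bk^{M(d-e)})$.

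The main step requiring care is the bookkeeping involved in commuting the coinvariants with the module action; this relies on Lemma \ref{symmetry} crucially, together with the fact that in characteristic zero $\Sigma$-coinvariants coincide with $\Sigma$-invariants, so the symmetrization is compatible with composition in $\sG_M$.
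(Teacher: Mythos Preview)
Your approach is essentially the paper's: the paper simply asserts the identification $\cP_\Sigma(-e,-n)_{d,m} = \Sym^n(\bigwedge^{d-e}\bk^{M(d-e)}) \otimes \Sym^{m-n}(\bigwedge^d\bk^{Md})$ and that the $\Sym(\bigwedge^d\bk^{Md})$-action is ordinary multiplication on the second factor, whereas you spell out the underlying orbit--stabilizer argument and invoke Lemma~\ref{symmetry} to justify compatibility with the module action. One small slip you share with the paper: you correctly observe that $\alpha_4$ is $\Sigma_m$-invariant but then drop it from the tensor decomposition, so strictly speaking the rank is off by a factor of $\binom{d}{e}$ (the number of order-preserving injections $[e]\to[d]$); this is harmless for the application in Theorem~\ref{syzygy}, since freeness and the generation degree $n$ are unaffected.
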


\begin{proof}
We have
\begin{align*}
\bigoplus_{m \geq 0} \cP_\Sigma(-e,-n)_{d,m} &= \bigoplus_{m \geq 0} (\Sym^n(\bigwedge^{d-e} \bk^{M(d-e)}) \otimes \Sym^{m-n}(\bigwedge^d \bk^{Md}))\\
&=\Sym^n(\bigwedge^{d-e} \bk^{M(d-e)}) \otimes \Sym(\bigwedge^d \bk^{Md})(-n).
\end{align*} 
As follows from the definitions, the action of $\Sym(\bigwedge^d \bk^{Md})$ on this space corresponds to the usual multiplication on  $\Sym(\bigwedge^d \bk^{Md})(-n)$.
\end{proof}

\begin{theorem}\label{syzygy}
There is a function $C_M(i,r)$, depending on $i,r,M$, but independent of $d$, such that 
\[
\Tor_i^{\Sym(\bigwedge^d \bk^{Md})}(\Sec_{d,r}(M),\bk)
\]
is concentrated in degrees $\leq C_M(i,r)$.
\end{theorem}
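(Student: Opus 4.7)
The plan is to exploit the $\sG_M$-module theory developed in \S\ref{6} to build a single free resolution of $\cT := P_{0,0}/\cS_M(r)$ that simultaneously handles all values of $d$, and then specialize row-by-row using Lemma \ref{free}. Observe that $\cT$ is generated in bidegree $(0,0)$ by the image of $1 \in P_{0,0}$, and so is a finitely generated $\sG_M$-module. Proposition \ref{coinvnoeth} tells us that every finitely generated $\sG_M$-module is noetherian, so the first syzygy module is finitely generated; iterating, I can inductively construct a free resolution
\[
\cdots \to F_2 \to F_1 \to F_0 \to \cT \to 0, \qquad F_i = \bigoplus_{j=1}^{g_i} P_{d_{i,j},m_{i,j}},
\]
with each $g_i$ finite. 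Setting $C_M(i,r) := \max_{j} m_{i,j}$ then gives a quantity depending only on $i$, $r$, and $M$ (not on any eventual choice of $d$).

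Next, I would introduce the ``$d$-row'' functor $\Phi_d$ sending a $\sG_M$-module $N$ to $\bigoplus_m N_{d,m}$, viewed as a graded $\Sym(\bigwedge^d \bk^{Md})$-module under the $\cdot$-product. This functor is exact because kernels and cokernels of $\sG_M$-module maps are formed pointwise, and Lemma \ref{free} shows that $\Phi_d(P_{e,n})$ is a free graded $\Sym(\bigwedge^d \bk^{Md})$-module with all generators in internal degree $n$ (and is zero whenever $e > d$, which only improves the bound). Applying $\Phi_d$ to the resolution above therefore produces a free graded resolution
\[
\cdots \to \Phi_d(F_1) \to \Phi_d(F_0) \to \Sec_{d,r}(M) \to 0
\]
of $\Sec_{d,r}(M)$ over $\Sym(\bigwedge^d \bk^{Md})$, in which each $\Phi_d(F_i)$ is free with generators in degrees $\leq C_M(i,r)$. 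Tensoring with $\bk$ kills nothing below this degree bound, so $\Tor_i^{\Sym(\bigwedge^d \bk^{Md})}(\Sec_{d,r}(M),\bk)$ is concentrated in degrees $\leq C_M(i,r)$, and the bound is manifestly independent of $d$.

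The main obstacle — or rather, the point one must be careful with — is confirming that the two algebraic structures line up correctly when specializing: namely, that the $\Sym(\bigwedge^d \bk^{Md})$-action on $\Phi_d(P_{e,n})$ induced from the $\sG_M$-morphisms really is the free module structure described in Lemma \ref{free}. This ultimately reduces to the compatibility between the categorical $\cdot$-composition in $\sG_M$ (restricted to endomorphisms of the $d$-row) and ordinary polynomial multiplication in $\Sym(\bigwedge^d \bk^{Md})$, which is built into the definitions in \S\ref{3} and \S\ref{6}. Granting that compatibility, the noetherianity of $\sG_M$-modules does all the essential work: it is what converts a Gr\"obner-style statement about finite generation (Theorem \ref{TheoremA}, the $i=0$ case) into the full syzygy statement in higher homological degree.
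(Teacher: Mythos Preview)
Your proposal is correct and follows essentially the same route as the paper: build a resolution of the $\sG_M$-module by finitely many principal projectives using Proposition~\ref{coinvnoeth}, then restrict to the $d$-row and invoke Lemma~\ref{free} to obtain a free resolution over $\Sym(\bigwedge^d \bk^{Md})$ with generator degrees bounded independently of $d$. The only cosmetic difference is that you resolve the quotient $P_{0,0}/\cS_M(r)$ whereas the paper resolves the submodule $\cS_M(r)$ and then appends the quotient at the end; your explicit ``$d$-row functor'' $\Phi_d$ is exactly what the paper does by hand. (One small slip in your closing comment: Theorem~\ref{TheoremA} corresponds to $i=1$, not $i=0$.)
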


\begin{proof}
We know $\cS_{M}(r)$ is a finitely generated submodule of $\cP_\Sigma$, and hence has a projective resolution
\[
\cdots \to \bF_i \to \bF_{i-1} \to \cdots \to \bF_0,
\]
such that each $\bF_i$ is a finite direct sum of principal projective modules by Proposition \ref{coinvnoeth}. For $d$ fixed, we get an exact complex of $\Sym(\bigwedge^d \bk^{Md})$-modules
\[
\cdots \to \bigoplus_{m} (\bF_i)_{d,m} \to \bigoplus_{m} (\bF_{i-1})_{d,m} \to \cdots \to \bigoplus_{m} (\bF_0)_{d,m} \to \Sec_{d,r}(M) \to 0.
\]
If $\bF_i = \bigoplus_{j=1}^k \cP_{\Sigma}(-d_j,-m_j)$, then set $C_M(i,r) = \max(m_1,\dots,m_k)$. In particular, by Lemma \ref{free}, this gives a free resolution which can be used to compute $\Tor_i^{\Sym(\bigwedge^d \bk^{Md})}(\Sec_{d,r}(M),\bk)$ which we conclude is concentrated in degrees $\leq C_M(i,r)$.
\end{proof}

\begin{remark}
If we write $T_{i;d,r}(M) = \Tor_i^{\Sym(\bigwedge^d \bk^{Md})}(\Sec_{d,r}(M),\bk)$. As used above, this is $\bZ$-graded, so we denote the $m$th graded component by $T_{i;d,r}(M)_m$. For fixed $i.m,r$, we get a functor on the full subcategory $\bk\cG_M$ on objects of the form $(d,m)$ by
\[
(d,m) \mapsto T_{i;d,r}(M)_m.
\]
From the results above, we conclude that this is a finitely generated functor. In particular, as we allow $d$ to vary, this means that $T_{i;d,r}(M)_m$ is ``built out" of $T_{i;d',r}(M)_m$ where the $d'$ range over some finite list of integers. This can be thought of as the Pl\"ucker analogue of $\Delta$-modules from \cite{Sn}.
\end{remark}

As above, we would like to find a bound independent of $M$, i.e., independent of the chosen vector space for the Pl\"ucker embedded Grassmannian.

\begin{theorem} \label{Mindependent}
The function $C_M(i,r)$ is independent of $M$ once $M \geq r+1+i$. In particular, there is a bound $C(i,r)$ that works for all $M$ simultaneously.
\end{theorem}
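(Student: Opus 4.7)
The plan is to combine $\GL$-equivariance of the resolution with a uniform bound on the lengths of partitions $\lambda$ indexing Schur functors in the $i$-th syzygy module. Because $\cS_M(r)$ is $\GL(\bk^{Md})$-equivariant, the free resolution $\bF_\bullet \to \cS_M(r)$ from the proof of Theorem \ref{syzygy} may be chosen $\GL$-equivariantly, so each graded piece $(\bF_i)_{d,m}$ decomposes as a direct sum of Schur functors $\bS_\lambda(\bk^{Md})$. Since $\bS_\lambda(V)$ vanishes exactly when $\ell(\lambda) > \dim V$, it suffices to prove that every partition $\lambda$ appearing among the minimal generators of the $i$-th syzygy (in any bidegree $(d,m)$) satisfies $\ell(\lambda) \leq (r+1+i)d$. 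For then, whenever $M \geq r+1+i$, we have $Md \geq (r+1+i)d \geq \ell(\lambda)$, so enlarging $M$ neither creates nor destroys any Schur functor in the minimal resolution, the resolution stabilizes as a polynomial functor of the ambient vector space, and in particular the maximum generating internal degree $C_M(i,r)$ is independent of $M$.

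I would establish the length bound $\ell(\lambda) \leq (r+1+i)d$ by induction on $i$, adapting \cite[Proposition 5.7]{MM}. The base case $i = 1$ is exactly Lemma \ref{Mbound}: the minimal ideal generators of $\cS_M(r)$ come from partitions of length at most $(r+2)d = (r+1+1)d$. For the inductive step, a minimal $i$-th syzygy sits in the kernel of the differential $\bF_{i-1} \to \bF_{i-2}$; by induction, $\bF_{i-1}$ is generated over the polynomial ring by Schur functors with $\ell(\lambda) \leq (r+i)d$, and the differential is built from multiplication by $\Sym^1(\bigwedge^d \bk^{Md}) = \bigwedge^d \bk^{Md} = \bS_{(1^d)}(\bk^{Md})$. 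By the Littlewood--Richardson rule (vertical-strip formula), tensoring with $\bS_{(1^d)}$ increases the length of any partition by at most $d$, so the minimal kernel elements lie in Schur functors with $\ell(\mu) \leq (r+i)d + d = (r+1+i)d$, closing the induction.

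The hard part will be the bookkeeping in the inductive step: one must ensure that the minimal kernel really lives in Schur functors of length only one $d$-block above those of the source, rather than being forced by the specific differentials into higher Schur functors or failing to produce minimal elements. Granted this combinatorial bound, the natural $\GL$-equivariant inclusion $\bk^{Md} \hookrightarrow \bk^{M'd}$ for $r+1+i \leq M \leq M'$ induces an isomorphism of minimal resolutions as polynomial functors in the ambient vector space, so $C_M(i,r) = C_{M'}(i,r)$, yielding the common value $C(i,r)$ independent of $M$.
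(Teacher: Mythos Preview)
Your overall strategy---bound the lengths of the partitions indexing Schur functors in $\Tor_i$, then use polynomial functoriality in the ambient vector space---is exactly the paper's, and your target bound $\ell(\lambda)\le (r+1+i)d$ is the same.  The divergence is in how the length bound is obtained, and your inductive mechanism has a genuine gap.

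The claim that the differential $\bF_{i-1}\to\bF_{i-2}$ in a minimal free resolution is ``built from multiplication by $\Sym^1(\bigwedge^d\bk^{Md})$'' is not correct: matrix entries in a minimal resolution can have any positive degree.  More importantly, even granting that, knowing that the \emph{generators} of $\bF_{i-1}$ lie in Schur functors with $\ell\le (r+i)d$ says nothing about which Schur functors the kernel occupies: the free module $\bF_{i-1}$ itself contains Schur functors of arbitrarily large length, so a minimal syzygy $\sum_j f_j e_j=0$ can a priori sit anywhere.  The Littlewood--Richardson step you describe bounds the lengths appearing in $\Tor_{i-1}\otimes\bS_{(1^d)}$, but that tensor product is not where the minimal $i$th syzygies live.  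You correctly flag this as ``the hard part,'' but the argument sketched does not close it.

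The paper sidesteps the induction by computing $\Tor$ from the other side: resolve $\bk$ by the Koszul complex rather than resolving $\Sec_{d,r}(M)$.  Then $\Tor_i^{\Sym(\bigwedge^d\bk^{Md})}(\Sec_{d,r}(M),\bk)$ is the $i$th homology of $\bigwedge^\bullet(\bigwedge^d(\bk^{Md})^*)\otimes_\bk \Sec_{d,r}(M)$, hence a subquotient of $\bigwedge^i(\bigwedge^d(\bk^{Md})^*)\otimes \Sec_{d,r}(M)$.  The input from \cite[\S5.1]{MM} is that the coordinate ring $\Sec_{d,r}(M)$ itself decomposes into $\bS_\lambda$ with $\ell(\lambda)\le d(r+1)$; since $\bigwedge^i(\bigwedge^d(\bk^{Md})^*)\subset(\bigwedge^d(\bk^{Md})^*)^{\otimes i}$ contributes at most $di$ to the length, subadditivity gives $\ell(\mu)\le d(r+1)+di=d(r+1+i)$ in one step.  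This is precisely the ``one $d$-block per homological degree'' phenomenon your induction was reaching for, but realized through the Koszul complex rather than the minimal resolution.
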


\begin{proof}
First we notice that $\Sec_{d,r}(M)$ is a direct sum of Schur functors $\bS_\lambda((\bk^{Md})^\ast)$ with $\ell(\lambda) \leq d(r+1)$ by \cite[\S5.1]{MM}. The $i$th Tor module of $\Sec_{d,r}(M)$ is the $i$th homology of the Koszul complex of $\bigwedge^d (\bk^{Md})^\ast$ tensored (over $\Sym(\bigwedge^d (\bk^{Md})^\ast)$) with $\Sec_{d,r}(M)$, so is a subquotient of 
\[
\bigwedge^i \bigwedge^d (\bk^{Md})^\ast \subset (\bigwedge^d(\bk^{Md})^\ast )^{\otimes i}
\]
tensored (over $\bk$) with $\Sec_{d,r}(M)$. So all Schur functors $\bS_\mu((\bk^{Md})^\ast )$ that appear in the $i$th Tor module satisfy $\ell(\mu) \leq d(r+1) + di$ by the subadditivity of $\ell$. In particular, no information is lost by specializing to the case $Md = d(r+1+i)$ \cite[Corollary 9.1.3]{SS2}. So it suffices to take $M = r+1+i$.
\end{proof}

\begin{remark}
This is a special case of Lemma \ref{Mbound}. There, $i = 1$ and we can take $M = r+2$, so that $C_{r+2}(1,r) = C(r)$ from Theorem \ref{TheoremA}.
\end{remark}

{\em Proof of Theorem \ref{TheoremB}.} Combine Theorem \ref{syzygy} and Theorem \ref{Mindependent}.

\end{document}